\documentclass[reqno, 12pt]{amsart}
\usepackage[letterpaper,hmargin=1in,vmargin=1in]{geometry}
\usepackage{amsmath,amssymb,amsthm, hyperref}
\usepackage{color}

\newtheorem{theorem}[equation]{Theorem}
\newtheorem{proposition}[equation]{Proposition}
\newtheorem{corollary}[equation]{Corollary}
\newtheorem{lemma}[equation]{Lemma}
\newtheorem{definition}[equation]{Definition}

\theoremstyle{remark}
\newtheorem{remark}[equation]{Remark}

\numberwithin{equation}{section}

\newcommand{\Tr}{\operatorname{Tr}}

\newcommand{\erfc}{\operatorname{erfc}}
\newcommand{\rf}{\operatorname{rf}}
\newcommand{\lf}{\operatorname{lf}}
\newcommand{\zf}{\operatorname{zf}}
\newcommand{\tf}{\operatorname{tf}}
\newcommand{\hff}{\operatorname{hff}}
\newcommand{\hnf}{\operatorname{hnf}}
\newcommand{\hrf}{\operatorname{hrf}}
\newcommand{\hlf}{\operatorname{hlf}}
\newcommand{\td}{\operatorname{td}}
\renewcommand{\sf}{\operatorname{sf}}

\renewcommand{\epsilon}{\varepsilon}

\newcommand{\mydoi}[1]{\href{https://doi.org/#1}{doi: #1}}

\begin{document}
\title[Heat trace and Riesz means with a large Robin parameter]
{Heat trace and Riesz means with a large Robin parameter}
\author{David A. Sher}\address{Department of Mathematical Sciences, DePaul University, 2320 N. Kenmore Ave, 60614, Chicago, IL, USA.}
\email{dsher@depaul.edu}
\begin{abstract}
We consider the Laplacian with Robin boundary conditions on a smooth Riemannian manifold with boundary and investigate the behavior of various spectral quantities as the Robin parameter approaches infinity. Although the spectrum, heat trace, and Riesz means converge pointwise to their Dirichlet analogues, the asymptotics of these quantities as the spectral parameter approaches infinity, or as time approaches zero, do not. The remainders in the usual asymptotic expansions are not bounded as the Robin parameter approaches infinity. Our main result is a two-term asymptotic expansion for the Robin heat trace that depends jointly on the time variable and the Robin parameter, with a remainder that satisfies a uniform bound independent of the Robin parameter. We apply this result to obtain a similar two-term asymptotic expansion for Riesz means of order at least one, again with a remainder bound independent of the Robin parameter.
\end{abstract}

\maketitle
\thispagestyle{empty}

\section{Introduction}

Let $(\Omega,g)$ be an $m$-dimensional smooth Riemannian manifold with smooth boundary. Let $\Delta$ denote the Laplace-Beltrami operator on $\Omega$, with sign chosen so that the eigenvalues of $\Delta$ are non-negative. Let $\mathbf{n}$ denote the \emph{inward} pointing normal vector on the boundary. We consider the heat equation with Robin boundary conditions:
\begin{equation}
\begin{cases}(\partial_t + \Delta_z)u(t,z) = 0 &\quad t\in (0,\infty),\ z\in\Omega;\\
\frac{\partial u}{\partial \mathbf{n}}(t,z) = \kappa u(t,z) &\quad t\in (0,\infty),\ z\in\partial\Omega.\\
\end{cases}
\end{equation}
With our sign conventions, a positive value of $\kappa$ is the physically relevant case. 
\begin{definition} The \emph{Robin heat kernel} $H(z,z',t,\kappa)$ is the fundamental solution of the heat equation on $\Omega$ with Robin boundary conditions with parameter $\kappa$. Namely,
\begin{equation}
\begin{cases}(\partial_t + \Delta_z)H(t,z,z') = 0 &\quad t\in (0,\infty),\ z\in\Omega,\ z'\in\Omega;\\
\frac{\partial H}{\partial \mathbf{n}(z)}(t,z,z') = \kappa H(t,z,z') &\quad t\in (0,\infty),\ z\in\partial\Omega,\ z'\in\Omega;\\
\lim_{t\to 0}H(t,z,z') = \delta_{z'}(z) &\quad z\in\Omega,\ z'\in\Omega.
\end{cases}
\end{equation}
\end{definition}
\noindent We consider what happens to the Robin heat kernel and its trace as the Robin parameter $\kappa$ varies from zero to positive infinity. 

As is well-known going back to the work of McKean and Singer \cite{mckeansinger}, the trace of the heat kernel on a manifold with boundary admits a short-time asymptotic expansion in powers of $t^{1/2}$. This is true for Dirichlet, Neumann, and Robin boundary conditions. This short-time expansion in the Dirichlet and Neumann cases is
\[\Tr H(t) \sim \frac{|\Omega|}{(4\pi)^{m/2}}t^{-m/2}\pm\frac {|\partial\Omega|}{4(4\pi)^{(m-1)/2}}t^{-(m-1)/2}+O(t^{-(m-2)/2}),\]
where the sign of $\pm$ is positive for Neumann and negative for Dirichlet boundary conditions \cite[Chapter 4]{gilkey}. 

At $\kappa=0$, the Robin heat kernel is the usual Neumann heat kernel. As $\kappa$ goes to infinity, the heat kernel converges pointwise to the Dirichlet heat kernel. So one might expect that as the Robin parameter goes from zero to infinity, that the second term in the short-time heat expansion would interpolate between the values for the Neumann and Dirichlet kernels. However, it does not. As can be seen in \cite{gilkey}, the heat trace expansion for Robin boundary conditions agrees with the Neumann case up to $O(t^{-(m-2)/2})$, regardless of the value of the parameter $\kappa$. In short, \emph{the limit of the heat trace asymptotics as $\kappa\to\infty$ is not the same as the asymptotics of the limit heat trace}.

Our main theorem explains this anomaly.

\begin{theorem}\label{thm:main}
Let $\Omega^m$ be a manifold with boundary and let $t_0>0$. Consider the heat trace of the Robin Laplacian with Robin parameter $\kappa\ge 0$ on $\Omega$. Then we have for the difference of Robin and Neumann heat traces
\begin{equation}\label{eq:maintheorem}
\Tr H_{\kappa}(t) - \Tr H_0(t) = -\frac{|\partial\Omega|}{2\cdot (4\pi t)^{(m-1)/2}}(1-e^{\kappa^2t}\erfc(\kappa\sqrt t)) + R(\kappa,t),
\end{equation}
where $R(\kappa,t)\le Ct^{-(m-2)/2}$ for some $C$ depending on $\Omega$ but not on $\kappa\in [0,\infty)$ or $t\in [0,t_0)$.
\end{theorem}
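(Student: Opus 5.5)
The plan is to reduce everything to an explicit one-dimensional model computation and then control the errors from curvature and the interior by a parametrix argument. The key point is that every estimate must be uniform in $\kappa\in[0,\infty)$.

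\textbf{Model computation.} First I would solve the heat equation on the half-line $[0,\infty)$ with the condition $u'(0)=\kappa u(0)$. Write $G_t(s)=(4\pi t)^{-1/2}e^{-s^2/4t}$. The standard explicit kernel is
\[
h_\kappa(t,x,x')=G_t(x-x')+G_t(x+x')-2\kappa\int_0^\infty e^{-\kappa w}\,G_t(x+x'+w)\,dw .
\]
One checks directly that $h_\kappa$ satisfies the boundary condition. At $\kappa=0$ it is the Neumann kernel, and as $\kappa\to\infty$ it tends to the Dirichlet kernel.

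Setting $x=x'$ and integrating over $x\in[0,\infty)$, the difference from Neumann is
\[
-2\kappa\int_0^\infty\!\!\int_0^\infty e^{-\kappa w}G_t(2x+w)\,dw\,dx .
\]
Integrating first in $x$ gives $\tfrac12\cdot\tfrac12\erfc(w/2\sqrt t)$. Integrating by parts against $\kappa e^{-\kappa w}$ (a Gaussian integral) then yields exactly $-\tfrac12\bigl(1-e^{\kappa^2 t}\erfc(\kappa\sqrt t)\bigr)$.

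Multiplying by the free tangential kernel $(4\pi t)^{-(m-1)/2}$ on the diagonal and integrating over $\partial\Omega$ produces the main term in \eqref{eq:maintheorem}. So the theorem says that the true kernel differs from this product model only by an amount of size $O(t^{-(m-2)/2})$, uniformly in $\kappa$.

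An important bound for later is
\[
0\le 2\kappa\int_0^\infty e^{-\kappa w}G_t(x+x'+w)\,dw\le 2G_t(x+x').
\]
It holds because $\kappa e^{-\kappa w}\,dw$ is a probability measure and $G_t$ is decreasing. Hence the Robin correction is dominated by the Neumann reflection term, independently of $\kappa$. The same argument applies to its $x$- and $t$-derivatives, at the cost of factors $t^{-1/2}$ or $(x+x')/t$.

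\textbf{Parametrix.} Next I would build a parametrix for $H_\kappa$. In a collar $\partial\Omega\times[0,\epsilon)$ with normal coordinate $x$, take the product of the Euclidean tangential heat kernel, in local coordinates along $\partial\Omega$, with $h_\kappa$ in $x$. Away from the boundary, take the interior kernel. Glue these with cutoffs.

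The parametrix fails to be exact for three reasons:
\begin{itemize}
\item the metric differs from the product metric by $O(x)$ plus tangential curvature terms;
\item the cutoff derivatives introduce errors;
\item the Laplacian carries a first-order term $H_{\partial\Omega}\,\partial_x$ coming from the mean curvature.
\end{itemize}
Crucially, the boundary condition itself is satisfied exactly, because the normal derivative in these coordinates is $\partial_x$. Using the domination bound above, each error kernel $E(t,z,z')$ is bounded by $C\,t^{-1/2}$ times a Neumann-type Gaussian $(4\pi t)^{-m/2}\bigl(e^{-|z-z'|^2/ct}+e^{-\mathrm{dist}(z,\tilde z')^2/ct}\bigr)$, with $C$ independent of $\kappa$. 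Here $\tilde z'$ is the reflection of $z'$ across the boundary.

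\textbf{Duhamel and the trace.} The Duhamel/Volterra series $H_\kappa=P+P*E+P*E*E+\dots$ then converges uniformly in $\kappa$. Its correction terms contribute at most $C\,t^{1/2}$ relative to the leading $t^{-m/2}$ singularity. After taking the trace, this is $O(t^{-(m-1)/2})$, which is one half-power too weak.

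To gain the extra $t^{1/2}$ I would expand more carefully. Take the trace of the parametrix $P$ itself, including the Jacobian $1+O(x)$ of the volume form. The $O(x)$ factor integrated against the Robin correction gives $O(t^{1/2})\cdot t^{-(m-1)/2}$, uniformly, by the domination bound. For the first Duhamel term $\Tr(P*E)$, I would apply the same $x$-weighting argument. The relevant part of $E$ is either $O(x)$ times second derivatives or the curvature term $\partial_x$. In both cases the gain comes from integrating against a kernel that concentrates within $\sqrt t$ of the boundary, and the Robin part is handled by domination.

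The Neumann part cancels, because the same construction at $\kappa=0$ gives $\Tr H_0$. I would therefore actually estimate the difference $H_\kappa-H_0$, whose parametrix is the $\kappa$-correction term alone. That is already supported within distance $\sim\sqrt t$ of $\partial\Omega$ and is bounded by the reflected Gaussian.

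\textbf{Main obstacle.} The hardest step is verifying that the first-order Duhamel correction to the difference $H_\kappa-H_0$ is $O(t^{-(m-2)/2})$ rather than $O(t^{-(m-1)/2})$, uniformly in $\kappa$. The difficulty is that the derivatives of the correction $2\kappa\int e^{-\kappa w}G_t(\cdot+w)\,dw$ are only uniformly bounded by $t^{-1/2}$-worse Gaussians.

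For this step I would first try to write the correction as a superposition in $w$ of shifted reflected Gaussians, with the probability weight $\kappa e^{-\kappa w}\,dw$. The estimates for each fixed shift are exactly the Neumann and Dirichlet-type estimates, which are $\kappa$-free. Averaging over $w$ should then give the uniform bound.
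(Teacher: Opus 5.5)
Your argument is correct in outline, and it takes a genuinely different route from the paper. Your half-line trace computation is the same integral the paper evaluates at the end; the difference lies in how uniformity in $\kappa$ is obtained.

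\textbf{The paper's route.} There, uniformity comes from polyhomogeneity rather than from kernel estimates. With $\epsilon=\kappa^{-1}$, the paper blows up $\Omega^2\times[0,\infty)_T\times[0,\infty)_\epsilon$ to a $\kappa$-dependent double heat space with faces $\hff,\hnf,\hrf,\hlf,\sf_0,\sf,\td_0,\td$. The appendix proves that the explicit half-space Robin correction is polyhomogeneous there. The construction starts from the \emph{true} Neumann kernel plus a term carrying the model's leading coefficients at $\sf$ and $\sf_0$. It then repairs the Robin condition and the $\rf$ error by hand and sums a Neumann series using its composition theorem. Finally it pushes the diagonal forward to the trace space $X$ obtained by blowing up $T=\epsilon=0$. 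Uniformity in $\kappa$ is then the statement that, after subtracting the model term, the trace has order $-(m-2)$ at both te and fe.

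\textbf{Your route.} You run a classical Levi--Duhamel argument with product parametrices $P_\kappa=P_0+C_\kappa$. You get $\kappa$-uniform Gaussian bounds from the probability-measure domination, and you subtract the two Volterra series. Since $P_\kappa$ and $P_0$ each satisfy their own boundary condition exactly, you never meet the $\kappa$-weighted boundary mismatch that arises when the true Neumann kernel is placed inside the Robin parametrix; that mismatch is what the paper's $H^{(2)}$ step repairs. Do take the difference series by series, as your remark suggests. Do not treat $C_\kappa$ as a parametrix for the inhomogeneous boundary problem solved by $H_\kappa-H_0$: there the boundary datum $\kappa H_0$ is matched only up to $\kappa(H_0-P_0)$, and the explicit factor of $\kappa$ returns.

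\textbf{What each buys.} The paper's machinery gives full polyhomogeneity of $\Tr H$ on $X$. In principle this yields uniform joint expansions to any order and a structural explanation of the non-commuting limits. Your approach gives only the two-term statement. It is elementary, however, and it could track the geometric dependence of $C$, which the paper says its own methods cannot.

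\textbf{Your ``main obstacle'' is already handled.} Uniformly in $\kappa$, $E_C:=(\partial_t+\Delta)C_\kappa$ is $O(s^{-1/2})$ times a reflected Gaussian. The difference of the first Duhamel terms is $C_\kappa*E_\kappa+P_0*E_C$.
\begin{itemize}
\item In each term the reflected factor has $z$ as one of its arguments, so on the diagonal it contributes $e^{-x^2/ct}$ directly.
\item The time integral of $s^{-1/2}$ gives a factor $t^{1/2}$, so the diagonal is $\lesssim t^{1/2}t^{-m/2}e^{-x^2/ct}$.
\item Integrating in the normal variable supplies a second factor $t^{1/2}$, giving $O(t^{-(m-2)/2})$.
\end{itemize}
Terms with two or more error factors are $O(t^{1-m/2})$ in trace for each series separately, with no boundary concentration needed.

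\textbf{One detail to make explicit.} In the parametrix, freeze the tangential metric at $z'$, that is, at normal level $x'$ rather than at the boundary. Otherwise the $O(x)\,\partial^2_{\mathrm{tan}}$ term acting on the direct part of $P_0$ is of size $x/t$ times a Gaussian, and the direct Gaussian does not control $x$. An error of that size is not integrable in time. Since the frozen tangential factor does not depend on $x$, the Robin condition stays exact.
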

\begin{remark} Without the first term on the right-hand side of \eqref{eq:maintheorem}, the remainder would fail to be uniform in $\kappa$. In other words, \eqref{eq:maintheorem} captures the part of the difference between Robin and Neumann heat kernels that is badly behaved in $\kappa$ as $\kappa\to\infty$.
\end{remark}
\begin{remark} If we fix $\kappa$ and let $t\to 0$, the correction term in \eqref{eq:maintheorem} is $O_{\kappa}(t^{(m-2)/2})$, reflecting the fact that the Robin and Neumann heat kernel asymptotics agree up to that order. However, if we let $\kappa\to\infty$ while fixing $t$, the limit of the correction term is $-|\partial\Omega|/(2\cdot (4\pi t)^{(m-1)/2})$, which is precisely the difference between the Neumann and Dirichlet heat trace asymptotics. So the asymptotics of the Robin heat trace ``at parameter infinity" agree with the asymptotics of the limiting Dirichlet heat trace, as expected.
\end{remark}
\begin{remark} There is interpolation between the Neumann and Dirichlet boundary conditions; however, it occurs as $\kappa\sqrt t$, rather than $\kappa$, goes from zero to infinity.
\end{remark}

\subsection{Relationship to prior work}

It has long been understood by physicists and mathematicians that the Robin spectrum interpolates between the Neumann and Dirichlet spectra as $\kappa$ goes to 0 and to infinity respectively. Accordingly, the variation of spectral quantities with respect to the Robin parameter has been studied carefully. In 1991, Friedlander used these ideas to give a proof of inequalities between Neumann and Dirichlet eigenvalues \cite{friedlander,filonov}. While the eigenvalues themeselves are individually well-behaved in both limits, more sophisticated spectral quantities have more unusual behavior. Ivrii showed that for any fixed $\kappa$, the second term in the asymptotics of the spectral counting functions for Robin and Neumann boundary conditions (should that term exist) are the same \cite{ivriiboundary}. However, the limit of the asymptotics as $\kappa\to\infty$ -- which is just the Neumann asymptotics -- does not agree with the asymptotics of the limiting Dirichlet counting function.

The present work is inspired by a 2012 paper of Frank and Geisinger \cite{frankgeisinger}. In this paper, the authors study Riesz means of order 1 -- a smoothed version of the eigenvalue counting function -- on domains with $C^1$ boundary. Their main idea is to couple the spectral parameter and the Robin parameter. Specifically, they look at the so-called \emph{critical scaling regime} where $\kappa\sim c\lambda^{1/2}$ for a fixed constant $c$, proving two-term asymptotics for Riesz means in that regime, as well as in the regimes where $\kappa$ grows either more slowly or more quickly than that scale. The heat analogue of the critical scaling regime is when $\kappa\sqrt t\sim c$ for a fixed constant $c$. Frank and Geisinger show that, unlike the situation for fixed $\kappa$, in this critical scaling regime, the second term in the spectral asymptotics does depend on the Robin parameter, in particular on the ratio $\kappa/\sqrt{\lambda}$.

In a series of two recent works, Baur and Larson have analyzed spectral asymptotics for cuboids in this same critical scaling regime \cite{bala1,bala2}. They first prove two-term asymptotics for Riesz means of any order in \cite{bala1} and then extend those results to the counting function itself in \cite{bala2}. Cuboids do not have $C^1$ boundary, so the work of Frank and Geisinger does not apply, but the authors are able to obtain analogous results via a connection to lattice point counting.

What we do here, in the case of smooth boundary, is provide a uniform heat trace estimate for the Robin heat kernel, valid in any regime, with a $\kappa$-independent remainder. This gives a complete and uniform characterization of the singular behavior suggested in \cite{frankgeisinger}. The critical scaling regime appears in our estimate through the first term on the right-hand side of \eqref{eq:maintheorem}, which depends precisely on the value of $\kappa\sqrt t$.

\subsection{Application to Riesz means}

We apply our heat trace estimate in order to study two-term asymptotics of Riesz means, along the lines of \cite{frankgeisinger,bala1,bala2}. This analysis is based closely on the approach and machinery developed by Frank and Larson \cite{franklarson}, who used it to, among other things, significantly sharpen the best known spectral asymptotics for convex domains. 

We use the notation of \cite{franklarson}. For $\gamma>0$, the Riesz mean of order $\gamma$ corresponding to the Robin Laplacian $\Delta^{\kappa}$ is
\[\Tr(\Delta^{\kappa}-\lambda)^{\gamma}_{-} = \sum_{\lambda_n^{\kappa}\le\lambda}(\lambda-\lambda_n^{\kappa})^{\gamma}\]
where the subscript $-$ denotes the negative part. As $\gamma$ approaches zero, these Riesz means converge pointwise to the counting function. Define the constant
\[L^{sc}_{\gamma,m}=\frac{\Gamma(\gamma+1)}{(4\pi)^{m/2}\Gamma(\gamma+\frac m2+1)}.\]
The main result of Frank and Larson, \cite[Theorem 1.1]{franklarson}, shows that if $\Omega$ has Lipschitz boundary and $\gamma>0$,
\[\Tr(\Delta^{\#}-\lambda)^{\gamma}_{-} = L^{sc}_{\gamma,m}|\Omega|\lambda^{\gamma+\frac d2}\mp\frac 14 L^{sc}_{\gamma,m-1}|\partial\Omega|\lambda^{\gamma+\frac{d-1}{2}}+o(\lambda^{\gamma+\frac{d-1}{2}}),\]
with a minus sign for Dirichlet boundary conditions and a plus sign for Neumann boundary conditions. They also give sharper estimates on the remainder, with explicit geometric constants, in the case of a convex domain. In a separate paper, Frank and Larson give two-term asymptotics for the Robin case, with fixed Robin parameter \cite{franklarson2}. Their Robin parameter may be a non-constant function on the boundary.

For our result, we consider the case when $\Omega$ has smooth boundary, $\gamma\ge 1$, and the Robin parameter is a constant $\kappa$. In that case, we give two-term asymptotics for the Riesz means. The improvement over \cite{franklarson2} is that our asymptotics are uniform in the Robin parameter $\kappa$, all the way to $\kappa=\infty$. We express our results in terms of the difference between Robin and Neumann Riesz means.
\begin{theorem}\label{thm:riesz} Suppose $\Omega$ is a smooth $m$-dimensional Riemannian manifold with smooth boundary. Suppose that $\gamma\ge 1$. Then
\begin{multline*}
\Tr (\Delta^0-\lambda)^{\gamma}_- - \Tr (\Delta^\kappa-\lambda)^{\gamma}_-
= \frac 12L^{sc}_{\gamma,m-1}|\partial\Omega|\lambda^{\gamma+(m-1)/2}\\ 
-\frac{2}{\pi}(\gamma+\frac{m-1}{2})L^{sc}_{\gamma,m-1}|\partial\Omega|\lambda^{\gamma+(m-1)/2}\int_0^{1}x\arctan(\frac{\sqrt\lambda}{\kappa}x)(1-x^2)^{\gamma+(m-3)/2}\, dx
\\+ O(\frac{\lambda^{\gamma+(m-1)/2}}{(\log|\lambda|)^{\gamma}}),
\end{multline*}
where the implied constant depends on $\gamma$ and $\Omega$ but is independent of $\kappa\in [0,\infty)$.
\end{theorem}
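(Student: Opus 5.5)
We will deduce Theorem~\ref{thm:riesz} from Theorem~\ref{thm:main} with a Tauberian argument in the style of Frank and Larson \cite{franklarson}. Set
\[
D_\gamma(\lambda,\kappa)=\Tr(\Delta^0-\lambda)^\gamma_- - \Tr(\Delta^\kappa-\lambda)^\gamma_- ,
\]
which is nonnegative because Robin eigenvalues dominate Neumann ones when $\kappa\ge0$. Its Laplace transform is expressed through heat traces:
\[
\int_0^\infty e^{-t\lambda}\,\Tr(\Delta^\#-\lambda)^\gamma_-\,d\lambda=\Gamma(\gamma+1)\,t^{-\gamma-1}\Tr H_\#(t).
\]
Hence Theorem~\ref{thm:main} gives
\[
\int_0^\infty e^{-t\lambda} D_\gamma(\lambda,\kappa)\,d\lambda=\Gamma(\gamma+1)\,t^{-\gamma-1}\Bigl[\frac{|\partial\Omega|}{2(4\pi t)^{(m-1)/2}}\bigl(1-e^{\kappa^2t}\erfc(\kappa\sqrt t)\bigr)-R(\kappa,t)\Bigr],
\]
with $|R|\le Ct^{-(m-2)/2}$ uniformly in $\kappa$. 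This assumes Theorem~\ref{thm:main} bounds $|R|$, not just $R$ from above. If only the upper bound is available, we would first establish a two-sided bound by the same method.

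The first step identifies the model function. We will check that the explicit term is the Laplace transform of
\[
M(\lambda,\kappa)=\tfrac12 L^{sc}_{\gamma,m-1}|\partial\Omega|\lambda^{\gamma+(m-1)/2}\bigl(1-\Phi(\sqrt\lambda/\kappa)\bigr),
\]
where $\Phi$ is the arctan integral in the statement normalized by $\tfrac{4}{\pi}(\gamma+\tfrac{m-1}{2})$. The natural route is a one-dimensional half-line computation. The function $1-e^{\kappa^2t}\erfc(\kappa\sqrt t)$ is exactly the difference of Neumann and Robin heat traces on a half-line, normalized per unit boundary area. The Robin half-line problem has the generalized eigenfunctions $\cos(\xi x)-\tfrac{\kappa}{\xi}\sin(\xi x)$, which have phase shift $\arctan(\kappa/\xi)$. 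The Krein spectral shift between Robin and Neumann is therefore $\tfrac1\pi\arctan(\xi/\kappa)$ in the variable $\xi$. We will tensor this with the flat $(m-1)$-dimensional boundary and integrate against $(\lambda-\xi^2-|\eta|^2)^\gamma_+$. After the substitution $\xi=\sqrt\lambda\,x$ and an integration by parts, this produces both the constant $\tfrac12L^{sc}_{\gamma,m-1}$ term and the $x\arctan(\tfrac{\sqrt\lambda}{\kappa}x)(1-x^2)^{\gamma+(m-3)/2}$ term. This is a routine, if tedious, verification of Laplace transforms.

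The second step is the Tauberian step, which we expect to be the main obstacle. From
\[
\int e^{-t\lambda}\bigl(D_\gamma-M\bigr)\,d\lambda=O\bigl(t^{-\gamma-1-(m-2)/2}\bigr)
\]
uniformly in $\kappa$, we must recover a pointwise bound on $D_\gamma-M$. Laplace-transform remainders alone do not transfer pointwise. We will use the argument of \cite{franklarson}: represent $(\mu-\lambda)_-^\gamma$ through a heat-kernel smoothing at time $t\sim \lambda^{-1}\log\lambda$, or, equivalently, compare $D_\gamma$ with its average against a kernel concentrated at scale $\lambda/\log\lambda$. We will control the smoothing error by a one-sided regularity property. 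The difference $D_\gamma$ equals $D_1$ convolved with $(\cdot)^{\gamma-1}_+$. For $\gamma\ge1$, each Riesz mean $\lambda\mapsto\Tr(\Delta^\#-\lambda)^\gamma_-$ is convex, increasing and has derivative $O(\lambda^{\gamma-1+m/2})$. This is where $\gamma\ge1$ enters.

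The difficulty is that $D_\gamma$ is a difference of two convex functions, so it is not itself monotone or convex. We therefore plan to apply the Tauberian bounds to each Riesz mean separately, from above and below. The large common leading terms cancel, but the smoothing errors at scale $\lambda/\log\lambda$ then cost $O(\lambda^{\gamma+(m-1)/2}(\log\lambda)^{-\gamma})$ only if the second-order smoothness of each mean is controlled by $\lambda^{\gamma-2+m/2}$ times an integrated Weyl bound. Optimizing the smoothing scale against the $t^{1/2}$ gain from $R$ is what yields the logarithmic loss. Uniformity in $\kappa$ holds throughout, because every constant comes either from the remainder $R$ of Theorem~\ref{thm:main} or from Neumann and Dirichlet Weyl bounds that sandwich the Robin ones. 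Finally, we must check that $M$ is smooth in $\lambda$ at the smoothing scale uniformly in $\kappa$. This holds because $\lambda\,\partial_\lambda$ of $\arctan(\sqrt\lambda x/\kappa)$ is bounded uniformly in $\kappa$.
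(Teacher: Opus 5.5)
\textbf{The gap is in your Tauberian step.} You say $D_\gamma$ is not monotone, and you propose to run Tauberian bounds on the Neumann and Robin Riesz means separately and let the leading terms cancel. This cannot reach the boundary scale. For an individual mean, the only one-sided information (positivity of $dN$, convexity, derivative $O(\lambda^{\gamma-1+m/2})$) lives at the volume scale. Its Tauberian or smoothing error is therefore of order $\lambda^{\gamma+m/2}$ times a factor that is only logarithmically small. Your own numbers show this: a second-derivative bound $\lambda^{\gamma-2+m/2}$ at scale $\lambda/\log\lambda$ costs $\lambda^{\gamma+m/2}(\log\lambda)^{-2}$, not $\lambda^{\gamma+(m-1)/2}(\log\lambda)^{-\gamma}$. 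Each of these errors is controlled from one side only, and the two sides do not match: the upper bound for one mean and the lower bound for the other differ by a volume-scale quantity. So they do not cancel in the difference, the term $\lambda^{\gamma+(m-1)/2}$ is swamped, and no choice of smoothing scale repairs this. A minor point in your model step: the Neumann-minus-Robin half-line spectral shift is $\frac1\pi\arctan(\kappa/\xi)=\frac12-\frac1\pi\arctan(\xi/\kappa)$, not $\frac1\pi\arctan(\xi/\kappa)$. With the inward-normal convention, the eigenfunctions are $\cos(\xi x)+\frac{\kappa}{\xi}\sin(\xi x)$.

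\textbf{The missing idea is that the difference itself carries the one-sided condition.} You observe yourself that the Robin form dominates the Neumann form on $H^1$, so $N_0(\lambda)\ge N_\kappa(\lambda)$. Hence $f=D_1$ is nondecreasing, with $df=(N_0-N_\kappa)\,d\lambda\ge0$ and $\int_0^\infty e^{-t\lambda}\,df=t^{-1}(\Tr H_0(t)-\Tr H_\kappa(t))$. In fact $D_\gamma'=\gamma D_{\gamma-1}\ge0$ for $\gamma\ge1$, contrary to your claim.

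The paper proceeds as follows. It subtracts the model measure $dh$, the inverse Laplace transform of $t^{-1}$ times the explicit term, obtained from $\mathcal L^{-1}(t^{-1}e^{\kappa^2t}\erfc(\kappa\sqrt t))=\frac2\pi\arctan(\sqrt\lambda/\kappa)$. The arctan contribution is nonnegative, so $dh\le\frac{m+1}{4}L^{sc}_{1,m-1}|\partial\Omega|\lambda^{(m-1)/2}\,d\lambda$ uniformly in $\kappa$. Hence, with $g=f-h$, the measure $dg+\frac{m+1}{4}L^{sc}_{1,m-1}|\partial\Omega|\lambda^{(m-1)/2}\,d\lambda$ is nonnegative. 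This is a one-sided bound at the \emph{boundary} scale.

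It then applies Frank--Larson's Tauberian remainder theorem (their Proposition 8.1) with weight $(1-v/\lambda)^{\gamma-1}$; this is where $\gamma\ge1$ enters, not through convexity of each mean. Combined with the uniform bound $|t^{-1}R|\le C_0t^{-m/2}$, it gives $B^kC_0\lambda^{m/2}+Ck^{-\gamma}\lambda^{(m+1)/2}$. Choosing $k\approx\frac14\log_B\lambda$ yields $O(\lambda^{(m+1)/2}(\log\lambda)^{-\gamma})$. Finally, an integration by parts turns $\int_0^\lambda(1-v/\lambda)^{\gamma-1}\,d(\cdot)$ into $\frac{\lambda^{1-\gamma}}{\gamma}D_\gamma$, and the $dh$ integral produces the two explicit terms. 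Your Tauberian step should be rebuilt around this monotone difference measure rather than the individual means.
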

As an immediate corollary of Theorem \ref{thm:riesz} and \cite[Theorem 1.1]{franklarson}, we obtain two-term asymptotics for the Robin Riesz means themselves:
\begin{multline*}
\Tr (\Delta^\kappa-\lambda)^{\gamma}_-
= L^{sc}_{\gamma,m}|\Omega|\lambda^{\gamma+\frac m2} - \frac 14L^{sc}_{\gamma,m-1}|\partial\Omega|\lambda^{\gamma+(m-1)/2}\\ 
+\frac{2}{\pi}(\gamma+\frac{m-1}{2})L^{sc}_{\gamma,m-1}|\partial\Omega|\lambda^{\gamma+(m-1)/2}\int_0^{1}x\arctan(\frac{\sqrt\lambda}{\kappa}x)(1-x^2)^{\gamma+(m-3)/2}\, dx\\ + o(\lambda^{\gamma+(m-1)/2}),
\end{multline*}
again with the implied constant independent of $\kappa$.

\begin{remark} It is instructive to compare these two-term asymptotics with \cite[Theorem 1.2]{bala1}. In that paper, the authors, working on cuboids, consider $\kappa=\beta\sqrt{\lambda}$ for a fixed $\beta$ and take the $\lambda\to\infty$ asymptotics. By using the alternative expression for their coefficient $L_{\gamma,m}(\beta)$ given in \cite[Lemma 2.8]{bala1}, one sees that our two-term asymptotics are identical to theirs.

This is unsurprising, even though their boundaries are not smooth and so technically our result does not recover theirs. Heuristically, the singularities from the edges of a cuboid should contribute only at the $Ct^{-(m-2)/2}$ level to the heat trace, so one would expect the two-term Riesz mean asymptotics to be the same, as they indeed appear to be.
\end{remark}

\begin{remark} Although we only consider $\gamma\ge 1$, it seems likely that our result could be extended to the case $\gamma\in (0,1)$ by following \cite{franklarson}. We have chosen to forgo this analysis for brevity.
\end{remark}
\begin{remark} It is difficult, using the methods of this manuscript, to obtain good geometric control over the constant in the remainder estimate for \eqref{eq:maintheorem}. As such, extending the explicit geometric constant bounds for convex domains in \cite{franklarson} to the Robin case would require additional techniques.
\end{remark}

\subsection{Plan of the paper} The main technical tool in this paper is a \emph{parametrix construction} for the heat kernel, performed uniformly as the Robin parameter $\kappa$ approaches infinity. This requires specification of a good enough initial guess and a procedure to iterate away the error. The uniformity in $\kappa$ is essential in order to obtain a remainder estimate that is independent of $\kappa$. For this purpose, we use geometric microlocal analysis (or ``blow-up analysis"), following \cite{NRS25}, to build a $\kappa$-dependent calculus of operators which includes our heat kernel and in which our construction may be carried out. This type of analysis is often a productive framework for understanding noncommuting limits; see \cite{mazzeorowlett}.

 Section 2 contains geometric preliminaries, and Section 3 contains the bulk of the geometric microlocal analysis needed for our parametrix construction, including the composition theorem. In Section 4, we carefully analyze the model Robin heat kernel on a Euclidean half-space. Section 5 builds the parametrix for the heat kernel, and section 6 passes from kernel to trace and proves Theorem \ref{thm:main}. The work on Riesz means, proving Theorem \ref{thm:riesz}, is in section 7. The Appendix contains a technical proof which shows that the model Robin heat kernel is polyhomogeneous conormal on an appropriate space, a result which is needed for the work in sections 4--6.

\subsection{Acknowledgements} The author is deeply grateful to Simon Larson for extensive conversations surrounding this manuscript at all stages of its development. Thanks are also due to Matthias Baur, Rupert Frank, Iosif Polterovich, Michael Levitin, and Nikolay Filonov for helpful discussions. The author would like to thank the Isaac Newton Institute for Mathematical Sciences, Cambridge, for support and hospitality during the programme ``Geometric spectral theory and applications", where work on this project was undertaken; this work was supported by EPSRC grant EP/Z000580/1. Financial support for the author's travel was provided by AMS-Simons REGPUI grant 501949-9208.

\subsection{AI disclosure} The author occasionally used a free version of Microsoft Copilot, in temporary mode, to look up references, as well as to check and de-bug calculations. No AI was used for generating the main ideas, or any of the text, of this manuscript.


\section{Geometry and notation}

We will often work in a coordinate patch near the boundary. In such a patch, we use Riemannian normal coordinates: $z=(x,y)$, where $y$ is the distance to the boundary and $x$ is an $(m-1)$-dimensional coordinate along the boundary. This notation is chosen to match \cite{NRS25}. Without loss of generality we work in a patch contained in a tubular neighborhood of the boundary, so that $y$ is a smooth function. In these coordinates, the Riemannian metric has the form
\[g(x,y)dx^2+dy^2,\]
where $g(0,y)=1$. Assuming that $x$ is a Riemannian normal coordinate on the boundary itself, the Laplace-Beltrami operator has the following expression:
\[\Delta = \Delta_{\partial\Omega}-\partial_{yy}+yb_1(x,y)\partial_{xx}+yb_2(x,y)\partial_x+b_3(x,y)\partial_y.\]
All of the $b_i$ are smooth, as is $g$. Here $\partial_{xx}$ is shorthand for a linear combination of second derivatives in the $x$ coordinates.

In boundary normal coordinates, the heat kernel is a function
\[H(x,y,x',y',t,\kappa).\]
We also let $T=\sqrt t$ to reflect the parabolic scaling of the heat kernel. Finally, since we consider only the $\kappa\to\infty$ limit, we write $\epsilon=\kappa^{-1}$ throughout and consider $\epsilon\to 0$. We may thus view the heat kernel as a function
\[H(x,y,x',y',T,\epsilon).\]
The ``little trace" of the heat kernel is the function
\[\textrm{tr }H(z,T,\epsilon)=H(z,z,T,\epsilon),\]
and the usual heat trace is
\[\textrm{Tr }H(T,\epsilon) = \int_{\Omega}H(z,z,T,\epsilon)\, dz.\]

Finally, throughout, we use the close relationship between the Robin and Neumann heat kernels. Denote the Neumann heat kernel (the $\kappa=0$ case) by the special notation
\[H_N(z,z',T),\]
and define the ``Robin correction" by
\[H_{corr}(z,z',T,\epsilon):=H(z,z',T,\epsilon)-H_N(z,z',T).\]
For each fixed $\epsilon$, this correction is lower order in $T$. Specifically, its restriction to the diagonal $z=z'$ blows up as $T^{-1}$ rather than $T^{-2}$ as $T\to 0$. See \cite{NRS25} for more details.

\section{Heat spaces}
\subsection{Geometric microlocal preliminaries}

Throughout, we use the language of geometric microlocal analysis developed by Richard Melrose \cite{melroseaps,melrosedaomwc}. We will not include a summary here. Rather, we refer to \cite{grieser} for a general introduction to the concepts of blow-ups, polyhomogeneous conormal (or ``pc") functions, and manifolds with corners. An excellent self-contained exposition, both brief and thorough, may be found in \cite[Section 2]{gtv}; this includes b-maps, b-fibrations, and Melrose's pullback and pushforward theorems. Finally, the reader is invited to see \cite[Sections 3.1 and 3.4]{NRS25} for a presentation specifically adapted to the problem at hand.

\subsection{The trace space}

The trace space $X$ is the natural habitat for the heat trace
\[\Tr H(T,\epsilon).\]
This space is created by taking $[0,T_0)_T\times [0,\infty)_{\epsilon}$ and blowing up $T=\epsilon=0$:
\begin{equation}\label{eq:defoftracespace}
X= [[0,T_0)_T\times [0,\infty)_{\epsilon};\ T=\epsilon=0].
\end{equation}
Denote its faces by te for the lift of $T=0$, ze for the lift of $\epsilon=0$, and fe for the ``front edge" created by the blow-up at $T=\epsilon=0$.

\subsection{The single space}
The single space is where the diagonal heat kernel $H(z,z,T,\epsilon)$ lives. Initially it is a function on the space
\[M_{\epsilon} :=\Omega\times [0,\infty)_T\times [0,\infty)_{\epsilon}.\]
For each positive $\epsilon$, our space should coincide with the single heat space from \cite{NRS25}. This means blowing up $\{z\in\partial\Omega,T=0\}$. However, we also need uniformity as $\epsilon\to 0$. Inspired by \cite{hassell,hmm,ars1,ars3}, the usual way to do this is to perform all blow-ups first at $\epsilon=0$ and then for positive $\epsilon$. So we let
\begin{equation}\label{eq:singlespace}
M_{\epsilon,h}:=[[M_{\epsilon};\partial\Omega\times\{0\}\times\{0\}];\partial\Omega\times\{0\}\times [0,\infty)].
\end{equation}
We call the faces of the single space: tf for the original $t=0$ face, sf for the original lift of $\partial\Omega$, zf for the original $\epsilon=0$ face, ff$_0$ for the front face of the first blow-up (at $\epsilon=0$), and ff for the second blow-up (for all $\epsilon$).

This space is related to the trace space by the following proposition.
\begin{proposition}\label{prop:singletotrace} The map from $[M_{\epsilon,h};\{T=\epsilon=0\}]$ to $X$ given by integration in the spatial variable is a b-fibration.
\end{proposition}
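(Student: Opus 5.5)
The plan is to check Melrose's criteria for a b-fibration directly in local projective coordinates. The map is $\pi:[M_{\epsilon,h};\{T=\epsilon=0\}]\to X$, the lift of the projection $(z,T,\epsilon)\mapsto(T,\epsilon)$. First I will show that this projection lifts to a smooth b-map. Then I will check the two conditions that make a b-map a b-fibration. The first is b-normality: no boundary hypersurface is mapped into a corner of $X$, i.e. each boundary defining function of $X$ pulls back to a product of boundary defining functions in which each face of the domain appears in at most one factor. The second is surjectivity of the b-differential on each fiber. Away from the boundary of $\Omega$, nothing has been blown up except $\{T=\epsilon=0\}$ itself. So there the map is locally the product $\Omega^\circ\times X\to X$, which is trivially a b-fibration. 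The work is therefore confined to a collar $z=(x,y)$ near $\partial\Omega$.

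Near the boundary the $x$ variables are inert, so it suffices to treat the three-dimensional space $[0,1)_y\times[0,\infty)_T\times[0,\infty)_\epsilon$. The blow-ups there are, in order:
\begin{enumerate}
\item the corner $y=T=\epsilon=0$, which creates ff$_0$;
\item the lift of $y=T=0$, which creates ff;
\item the lift of $T=\epsilon=0$, which creates the new face, call it ff$'$.
\end{enumerate}
I will first identify the lift of $\{T=\epsilon=0\}$ to $M_{\epsilon,h}$. It is a p-submanifold that meets ff$_0$. Next I will use the standard commutation results for blow-ups of nested or transversal p-submanifolds (as in \cite{grieser,gtv}). With these I will rewrite the total space as an iterated blow-up of $\Omega\times X$, equivalently of the preimage of $X$ under the projection. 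Concretely, I expect that blowing up $\{T=\epsilon=0\}$ first (producing $\Omega\times X$) and then blowing up the lifts of $\partial\Omega\times\{$fe$\}$ and $\partial\Omega\times\{$te$\}$ gives the same space up to diffeomorphism. The reason is that the centers are either nested (the corner $y=T=\epsilon=0$ lies inside $T=\epsilon=0$) or meet cleanly. If this works, the result follows from two facts. The projection $\Omega\times X\to X$ is a b-fibration. Blowing up boundary faces of the total space that are preimages of boundary faces (here, $\partial\Omega\times$ a face of $X$) preserves the b-fibration property: the blow-down map composed with a b-fibration remains a b-fibration when the center is a boundary p-submanifold of this fibered form.

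As a backup, or as a check, I will write explicit coordinates in each chart. Take, for instance, the region near ff$_0\cap$ff$'$ with $\rho=\sqrt{y^2+T^2+\epsilon^2}$-type projective coordinates. In such a chart I will verify the pullbacks
\begin{align*}
\rho_{\mathrm{te}}&\mapsto \rho_{\mathrm{tf}}\,\rho_{\mathrm{ff}},\\
\rho_{\mathrm{ze}}&\mapsto \rho_{\mathrm{zf}},\\
\rho_{\mathrm{fe}}&\mapsto \rho_{\mathrm{ff}_0}\,\rho_{\mathrm{ff}'},
\end{align*}
up to smooth positive factors. This exhibits b-normality: sf maps to the interior, and each face maps onto exactly one face of $X$. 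Surjectivity of the b-differential then reads off from these monomial formulas, since in each chart the images of $\rho\partial_\rho$ for the relevant faces span ${}^bTX$.

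I expect the main obstacle to be the step showing that ff$_0$ is not mapped into the corner fe$\cap$ze or fe$\cap$te. This is exactly why the extra blow-up of $\{T=\epsilon=0\}$ is needed. Without it, ff$_0$ would map onto the corner point $T=\epsilon=0$, and b-normality would fail. So the care goes into the chart covering the triple intersection of ff$_0$, ff, and ff$'$, checking that the lifted centers intersect cleanly there and that the monomial exponents each lie in $\{0,1\}$.
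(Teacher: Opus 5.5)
Your proposal is correct and follows essentially the paper's route: lift the projection to the b-fibration $\Omega\times X=[M_\epsilon;\{T=\epsilon=0\}]\to X$, blow up $\partial\Omega\times\mathrm{fe}$ and then the lift of $\partial\Omega\times\mathrm{te}$ (each is a corner whose new front face maps onto a single face of $X$, so the b-fibration property survives), and identify the result with $[M_{\epsilon,h};\{T=\epsilon=0\}]$ by commuting blow-ups. The one point to make precise is that the second commutation rests on disjointness, not merely clean intersection: after blowing up $\{y=T=\epsilon=0\}$, the lifts of $\{T=\epsilon=0\}$ and $\{y=T=0\}$ no longer meet, and that is what lets them be interchanged.
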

\begin{proof} The map from $M_{\epsilon}$ to $[0,\infty)_T\times[0,\infty)_{\epsilon}$ is a b-fibration because it is a coordinate map. By [Lemma 2.5]\cite{hmm} it lifts to a b-fibration from
\[[M_{\epsilon};\{T=\epsilon=0\}]\to X.\]
Composing with the blow-down map yields a b-submersion from
\[[M_{\epsilon};\{T=\epsilon=0\};\ \partial\Omega\times\{0\}\times\{0\}]\to X,\]
and in fact this map is a b-fibration as the new front face has image equal to fe \cite[Proposition 3.8 part 4]{NRS25}. The same analysis yields a b-fibration from
\begin{equation}\label{eq:prepforcommute}
[M_{\epsilon};\{T=\epsilon=0\};\ \partial\Omega\times\{0\}\times\{0\};\ \partial\Omega\times\{0\}\times [0,\infty)]\to X,
\end{equation}
as the new front face has image te. 
 But we claim that the domain space in \eqref{eq:prepforcommute} is $[M_{\epsilon,h};\{T=\epsilon=0\}]$. Indeed, the first and second blow-ups in \eqref{eq:prepforcommute} are nested and therefore commute \cite[Lemma 2.1]{hmm}. After switching them, the (new) second and third blow-ups are disjoint and therefore commute \cite[Lemma 2.1]{hmm}. Switching them yields the claim, ending the proof.
\end{proof}

\subsection{The double space}

The double space is where the heat kernel itself lives. Initially the heat kernel $H(z,z',T,\epsilon)$ is a function on the interior of the manifold with corners
\[M^2_{\epsilon} := \Omega\times\Omega\times[0,\infty)_T\times [0,\infty)_{\epsilon}.\]
Denote the boundary hypersurfaces of this manifold with corners by zf for $\epsilon=0$, tf for $T=0$, and
\[\textrm{rf }=\partial\Omega\times\Omega\times [0,\infty)_{T}\times[0,\infty)_{\epsilon};\quad \textrm{lf }=\Omega\times\partial\Omega\times [0,\infty)_T\times[0,\infty)_{\epsilon}.\]
The notation here is chosen so that $\epsilon=0$ is the ``zero face", $T=0$ is the ``time face", and the other two faces are the ``right" and ``left" faces. Some authors switch the roles of right and left; we match \cite{NRS25}, which matched \cite{MaVe}.

For technical reasons, we must introduce scales with $y\sim y'\sim\epsilon$, or any two of those, at the boundary. Define $\hat M_{\epsilon}^2$ as follows:
\begin{multline}\hat M_{\epsilon}^2:=[M_{\epsilon}^2; \partial\Omega\times\partial\Omega\times [0,\infty)\times \{0\};\partial\Omega\times\partial\Omega\times[0,\infty)\times[0,\infty);\\
\partial\Omega\times\Omega\times [0,\infty)\times \{0\}; \Omega\times\partial\Omega\times [0,\infty)\times \{0\}].
\end{multline}
Call the new faces hff, hnf, hrf, and hlf respectively, as in \cite{NRS25} (here the ``n" is for ``nonzero", since this is a blow-up of $y=y'$ for nonzero $\epsilon$). Call the lifted diagonal $\Delta$; it is a p-submanifold of $\hat M_{\epsilon}^2$, intersecting hnf, hff, zf, and tf.

Consider the following four $p$-submanifolds of $\hat M_{\epsilon}^2$:
\[P_1=\Delta\cap\textrm{tf}\cap\textrm{hff};\ P_2=\Delta\cap\textrm{tf}\cap\textrm{zf};\ P_3=\Delta\cap\textrm{tf}\cap\textrm{hnf};\ P_4=\Delta\cap\textrm{tf},\]
as well as two others which will be useful later:
\[P_5=\textrm{tf}\cap\textrm{hff};\ P_6=\textrm{tf}\cap\textrm{hnf}.\]
Now set
\[M_{rh,\epsilon}^2:=[[\hat M^2_{\epsilon};P_1];P_3],\quad M_{h,\epsilon}^2=[[M_{rh,\epsilon}^2;P_2];P_4].\]
Blowing up $P_1$ creates a new face which we call sf$_0$, and blowing up $P_3$ creates sf. Blowing up $P_2$ creates td$_0$, and blowing up $P_4$ creates td. With this notation, each slice of $M_{\epsilon,h}^2$ obtained by fixing a positive $\epsilon$ is nearly isomorphic to $M_{h}^2$ from \cite{NRS25}, in the vertex-free setting. The labeling is very similar; the only difference is that our rf and lf correspond to $E_{10}$ and $E_{01}$ respectively from \cite{NRS25}. The only additional blow-up we have done is to create hnf, which is at the intersection $E_{10}\cap E_{01}$ in the notation of \cite{NRS25}.

\begin{remark} Since $P_3$ and $P_2$ are disjoint, we may blow them up in either order \cite[Proposition 3.8]{NRS25}. We have chosen $P_3$ first because the intermediate space $M^2_{rh,\epsilon}$ will be useful.
\end{remark}

\begin{lemma}\label{lem:doubletosingle} The lifted diagonal (closure of the interior of the diagonal) in $M_{\epsilon,h}^2$ is a p-submanifold of $M_{\epsilon,h}^2$ which is isometric to $M_{\epsilon,h}$.
\end{lemma}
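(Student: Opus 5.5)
The plan is to track the diagonal through the sequence of blow-ups defining $M_{h,\epsilon}^2$ and compare, step by step, with the blow-ups defining $M_{\epsilon,h}$ in \eqref{eq:singlespace}. Start from $M_\epsilon^2=\Omega\times\Omega\times[0,\infty)_T\times[0,\infty)_\epsilon$. There the diagonal $\Delta_0=\{z=z'\}$ is a p-submanifold canonically diffeomorphic to $M_\epsilon=\Omega\times[0,\infty)_T\times[0,\infty)_\epsilon$ via $(z,z,T,\epsilon)\mapsto(z,T,\epsilon)$. This is clear in boundary normal coordinates: $\Delta_0=\{x=x',\,y=y'\}$ is cut out by the vanishing of interior coordinates $x-x'$, $y-y'$. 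The statement is then proved by induction over the blow-ups using one standard fact. If $Y$ is a p-submanifold meeting a p-submanifold $P$ cleanly, then the lift (closure of the interior) of $Y$ in $[W;P]$ is canonically $[Y;Y\cap P]$ when $Y\cap P$ is a p-submanifold of $Y$ of positive codimension. The lift is simply $Y$, diffeomorphically, when $Y$ is transversal to the normal directions of $P$ in the sense that $Y\cap P$ is a boundary hypersurface of $Y$. It is also simply $Y$ when $Y\cap P$ has empty interior in the appropriate sense and the blow-up only lifts boundary faces of $Y$.

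Go through the blow-ups in order.

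\textbf{(1)} Blowing up $\partial\Omega\times\partial\Omega\times[0,\infty)\times\{0\}$ meets $\Delta_0$ in $\partial\Omega\times[0,\infty)\times\{0\}$ in single-space terms. Near this set, in projective coordinates $y/\epsilon$, $y'/\epsilon$, the diagonal $y/\epsilon=y'/\epsilon$ is interior to the front face hff. So the lifted diagonal is $[M_\epsilon;\partial\Omega\times[0,\infty)\times\{0\}]$, which is not literally one of the blow-ups in \eqref{eq:singlespace}.

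\textbf{(2)} Similarly, the blow-up creating hnf restricts on the diagonal to the blow-up of $\partial\Omega\times[0,\infty)\times[0,\infty)$. This is a codimension-one boundary hypersurface of the diagonal, so blowing it up does nothing. The same holds for step (1): $\{y=0,\epsilon=0\}$ has codimension two in $M_\epsilon$ and is a genuine blow-up. Correctly, then, step (1) gives the lift $[M_\epsilon;\{y=\epsilon=0\}]$, and step (2) changes nothing.

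\textbf{(3)} The blow-ups of $\partial\Omega\times\Omega\times[0,\infty)\times\{0\}$ and its mirror, creating hrf and hlf, meet the lifted diagonal only at the lift of $\{y=0\}\cap\{\epsilon=0\}$. That set has already been separated by hff, so their intersection with the lifted diagonal is empty and they do not affect it.

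\textbf{(4)} For $P_1,\dots,P_4$, the diagonal contains each of $P_1,\dots,P_4$, and each $P_i$ has codimension $\ge 2$ inside the lifted diagonal. Blowing up $P_1$ restricts to blowing up $\{y=\epsilon=0,T=0\}$ on the single side, i.e.\ $\partial\Omega\times\{0\}\times\{0\}$ lifted. Blowing up $P_3$ restricts to blowing up $\partial\Omega\times\{0\}\times[0,\infty)$. Blowing up $P_2$ and $P_4$, which lie in tf$\cap\Delta$, restricts to blowing up a boundary hypersurface of the diagonal (on the diagonal these are just $T=0$ pieces), so those steps are trivial on the diagonal. Locally, td is the blow-up of $\{T=0,\ z-z'=0\}$, and the diagonal meets it in the hypersurface $T=0$.

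Combining the steps, the lifted diagonal is
\[[M_\epsilon;\{y=\epsilon=0\};\,\partial\Omega\times\{0\}\times\{0\};\,\partial\Omega\times\{0\}\times[0,\infty)].\]
It remains to check that this equals $M_{\epsilon,h}$. This is the main obstacle: the extra blow-up of $\{y=\epsilon=0\}$, coming from hff, must be shown harmless. I would argue it using the commutation lemmas \cite[Lemma 2.1]{hmm} as in Proposition \ref{prop:singletotrace}. First, $\partial\Omega\times\{0\}\times\{0\}\subset\{y=\epsilon=0\}$ is nested, so the two blow-ups commute. After commuting, the codimension-two blow-up $\{y=\epsilon=0\}$ occurs after the other two. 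Its lift is then the intersection of the faces sf and zf, a corner of codimension two, and blowing up a corner only alters the space by a b-diffeomorphism type change; in fact it is a genuine change. So I expect instead to need to check, in coordinates $(y/\epsilon,\epsilon,T)$, whether the lifted diagonal is actually transversal to hff. Concretely, I would compute in local coordinates near each new face that the restriction of the defining functions matches those of $M_{\epsilon,h}$. At this point I would be prepared to find that the correct identification uses the intermediate blow-up, with $M_{\epsilon,h}$ possibly understood up to this corner blow-up.
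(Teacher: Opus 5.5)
\textbf{The gap cannot be closed: the identification you were aiming for is false.} Your proposal stops short of a proof, but your bookkeeping through $\hat M^2_\epsilon$ and $M^2_{rh,\epsilon}$ is correct. The face $\Delta\cap\hff$ is a genuine boundary hypersurface of the lifted diagonal; the paper itself records that $\Delta$ meets $\hff$. For any fixed $T>0$, near $y=y'=\epsilon=0$ the lifted diagonal is covered by the charts $(\epsilon,y/\epsilon)$ and $(y,\epsilon/y)$, i.e.\ it is $[0,\infty)^2_{y,\epsilon}$ with its corner blown up. By contrast, $M_{\epsilon,h}$ is untouched for $T>0$. Hence, exactly as your commutation argument shows, the lifted diagonal in $M^2_{rh,\epsilon}$ is $[M_{\epsilon,h};\sf\cap\zf]$. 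That space has one more boundary hypersurface over each component of $\partial\Omega$ than $M_{\epsilon,h}$, so it is not diffeomorphic to $M_{\epsilon,h}$ as a manifold with corners. The paper's proof is only the assertion that the lifted diagonal is ``already isometric to $M_{\epsilon,h}$'' in $M^2_{rh,\epsilon}$. That assertion overlooks $\hff$, so there is nothing there for you to match.

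\textbf{Your step (4) repeats the paper's second assertion, which fails for $P_2$.} Only $P_4=\Delta\cap\tf$ is a boundary hypersurface of $\Delta$. $P_2=\Delta\cap\tf\cap\zf$ is a codimension-two corner of $\Delta$, so by the criterion you state at the outset, blowing it up adds a new face $\Delta\cap\td_0$. This is visible even away from $\partial\Omega$. There $M^2_{h,\epsilon}$ is $[(\Omega^\circ)^2\times[0,\infty)^2_{T,\epsilon};\{z=z',T=\epsilon=0\};\{z=z',T=0\}]$, and the lifted diagonal is $[\Omega^\circ\times[0,\infty)^2_{T,\epsilon};\{T=\epsilon=0\}]$, while $M_{\epsilon,h}$ is unblown. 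Section 6 of the paper in fact treats $\td_0$ as a face of the diagonal.

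A smaller point: $\Delta_0$ is not a p-submanifold of $M^2_\epsilon$, because $y-y'$ is not an admissible coordinate at the corner $y=y'=0$. Your lifting principle therefore applies only from $\hat M^2_\epsilon$ on, and steps (1)--(2) must be done in projective coordinates, which is in effect what you do.

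\textbf{What you can prove instead.} Once $P_2$ is handled correctly, your method shows that the lifted diagonal is a p-submanifold of $M^2_{h,\epsilon}$ diffeomorphic to $[M_{\epsilon,h};\tf\cap\zf;\sf\cap\zf]$. The two corners are disjoint in $M_{\epsilon,h}$, so the order of the blow-ups is irrelevant. You should state and prove this version. It is all that Section 6 needs:
\begin{itemize}
\item the blow-up of $\tf\cap\zf$ is exactly the blow-up of $\{T=\epsilon=0\}$ in Proposition \ref{prop:singletotrace};
\item the extra face over $\sf\cap\zf$ maps onto ze;
\item hence the integration map to $X$ is still a b-fibration, and the expansions at te and fe are unchanged.
\end{itemize}
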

\begin{proof} This lifted diagonal is a p-submanifold of $M^2_{rh,\epsilon}$ and is in fact already isometric to $M_{\epsilon,h}$ there. Its structure as a manifold with corners is not changed by the blow-ups of $P_2$ and $P_4$.
\end{proof}

Let $M^2_{rh}$ and $M^2_h$ be the reduced double heat space and the double heat space for the problem without $\epsilon$, defined in \cite[Section 3.3]{NRS25}. 
\begin{lemma}\label{lem:addtheeps} The map $\pi(z,z',T,\epsilon)=\pi(z,z',T)$ lifts to well-defined b-maps from $M^2_{rh,\epsilon}$ to $M^2_{rh}$ and from $M^2_{h,\epsilon}$ to $M^2_h$.
\end{lemma}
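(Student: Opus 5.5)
We will prove the lemma by factoring the projection through the unblown-up spaces and tracking each blow-up. The starting point is the projection $\pi_0: M^2_\epsilon = \Omega\times\Omega\times[0,\infty)_T\times[0,\infty)_\epsilon \to \Omega\times\Omega\times[0,\infty)_T$, which forgets $\epsilon$. It is a b-fibration, indeed a coordinate projection. Recall that $M^2_{rh}$ is obtained from $\Omega\times\Omega\times[0,\infty)_T$ by blowing up $\partial\Omega\times\partial\Omega\times[0,\infty)$ (giving hff in \cite{NRS25}) and then the analogues of $P_1$ and $P_3$. It is cleanest to use only \cite[Lemma 2.5]{hmm} and the standard facts about blow-ups of p-submanifolds and b-maps. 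Accordingly, we will peel off the blow-ups of $M^2_{rh,\epsilon}$ one at a time, checking at each stage either that the centre is the preimage of a centre downstairs, or that blowing it up is harmless.

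\textbf{Step 1.} Consider the blow-ups in $\hat M^2_\epsilon$ of the $\epsilon=0$ submanifolds $\partial\Omega\times\partial\Omega\times[0,\infty)\times\{0\}$, $\partial\Omega\times\Omega\times[0,\infty)\times\{0\}$ and $\Omega\times\partial\Omega\times[0,\infty)\times\{0\}$. These centres lie inside the boundary hypersurface zf $=\{\epsilon=0\}$. Blowing up a p-submanifold and composing with the blow-down gives a b-map, so any b-map precomposed with these blow-downs stays a b-map. It remains to see that the target is correct. The composition is still a b-map into $\Omega\times\Omega\times[0,\infty)_T$, since composition of b-maps is a b-map.

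\textbf{Step 2.} Next take the blow-up of $\partial\Omega\times\partial\Omega\times[0,\infty)_T\times[0,\infty)_\epsilon$. This centre is exactly $\pi_0^{-1}$ of the centre $\partial\Omega\times\partial\Omega\times[0,\infty)_T$ downstairs, lifted to the space of Step 1. Because the Step 1 blow-ups are nested or transversal relative to it, we can commute orders using \cite[Lemma 2.1]{hmm} as in Proposition \ref{prop:singletotrace}. We then invoke the standard lifting result: if $f$ is a b-map and $C$ is a p-submanifold downstairs with $f^{-1}(C)$ a p-submanifold, then $f$ lifts to a b-map of the blown-up spaces, and \cite[Lemma 2.5]{hmm} covers the b-fibration case. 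This lifts the map to one with target the space with hff blown up.

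\textbf{Step 3.} Finally, $P_1=\Delta\cap\textrm{tf}\cap\textrm{hff}$ and $P_3=\Delta\cap\textrm{tf}\cap\textrm{hnf}$ must be handled. Here $P_3$ lies in the face created at positive $\epsilon$, which maps onto the $\epsilon$-free hff, and $P_3$ is the full preimage of the corresponding $\epsilon$-free $P_1$-type centre away from $\epsilon=0$. Hence the lift through $P_3$ follows as in Step 2. By contrast, $P_1$ lies over $\epsilon=0$ and maps into that same centre. For the lift, it suffices that the composite to the unblown target is a b-map and that the pullback of the defining function of the new front face downstairs is a product of boundary defining functions upstairs. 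We will verify this in projective coordinates near $P_1$ and $P_3$, using parabolic variables $(y/T, y'/T, (x-x')/T)$ and their $\epsilon$-rescaled versions.

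For $M^2_{h,\epsilon}\to M^2_h$, the same argument applies with $P_4$ playing the role of the downstairs td centre, and with $P_2$ handled like $P_1$.

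The main obstacle is the faces created at $\epsilon=0$, namely sf$_0$ and td$_0$, whose centres are not preimages of anything downstairs. We must check in explicit coordinates that the pulled-back defining functions of sf and td factor correctly, say $\rho_{sf}\mapsto \rho_{sf}\rho_{sf_0}\cdot(\text{smooth positive})$. There must be no non-monomial vanishing.
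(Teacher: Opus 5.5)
Your overall route is the paper's. The $\epsilon$-forgetting projection is a b-fibration $M^2_\epsilon\to M^2$, and precomposing with blow-downs gives a b-map on $\hat M^2_\epsilon$. One then checks that $P_1,P_3$ (resp.\ $P_2,P_4$) resolve the pull-back of the sf (resp.\ td) centre. You point out that $P_1$ and $P_2$ are not preimages of anything, so the $\epsilon=0$ faces need a direct check of the form $\rho_{\sf}\mapsto\rho_{\sf}\rho_{\sf_0}\cdot(\text{positive})$. This is more accurate than the paper, which says the preimage of the sf centre is ``precisely $P_3$''. In fact that preimage is $P_1\cup P_3$, with $P_1=\{x=x'\}\cap\tf\cap\hff$ and $P_3=\{x=x'\}\cap\tf\cap\hnf$ as in the appendix. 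Reading $\Delta$ in their definitions literally, i.e.\ also imposing $y=y'$, would not resolve sf.

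There is, however, a concrete error in your description of the target. In the vertex-free setting, $M^2_{rh}$ from \cite{NRS25} is the single blow-up $[\Omega\times\Omega\times[0,\infty)_T;S]$ with $S=\{x=x',\,y=y'=0,\,T=0\}$, and $M^2_h$ adds only td. There is no downstairs blow-up of $\partial\Omega\times\partial\Omega\times[0,\infty)$; that is precisely this paper's $\hnf$, which the paper calls ``the only additional blow-up''. This has three consequences:
\begin{itemize}
\item Your Step 2 lifts to a space that is not the target.
\item $\hnf$ does not map onto any downstairs face; like $\hff$, it maps into the codimension-two corner $\rf\cap\lf$ of $M^2$.
\item The sf centre is $S$ itself, not a ``$P_1$-type centre'' inside a downstairs $\hff$.
\end{itemize}
The repair is easy. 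One option is to drop Step 2: Step 1 with the $\hnf$ blow-down included already gives a b-map $\hat M^2_\epsilon\to M^2$. Step 3 then becomes the claim that the ideal generated by $x-x'$, $y$, $y'$, $T$ pulls back to $M^2_{rh,\epsilon}$ as the principal monomial ideal generated by $\rho_{\sf}\rho_{\sf_0}$, and similarly for td with $\rho_{\td}\rho_{\td_0}$. The other option is to keep your finer target and note that it blows down to $M^2_{rh}$, because nested blow-ups commute.

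Be aware that none of these maps is a b-fibration, since $\hff$ and $\hnf$ land in $\rf\cap\lf$. So \cite[Lemma 2.5]{hmm} does not apply. The principle you quote in Step 2 (a b-map lifts once $f^{-1}(C)$ is a p-submanifold) is false without a transversality hypothesis. For example, the b-map $(a,s)\mapsto(a,s^2)$ on $[0,\infty)\times\mathbb R$ with $C=\{(0,0)\}$ has $f^{-1}(C)=\{(0,0)\}$, but it does not lift continuously to the blown-up spaces. Hence the explicit coordinate verification is needed for $P_3$ as well, and ``follows as in Step 2'' does not suffice.

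A computation-free alternative is to commute nested and disjoint blow-ups. This exhibits $M^2_{rh,\epsilon}$ as an iterated blow-up of $[M^2_\epsilon;S\times[0,\infty)_\epsilon]=M^2_{rh}\times[0,\infty)_\epsilon$, starting with the corner $\sf\cap\zf$ (which creates $\sf_0$). Likewise $M^2_{h,\epsilon}$ is an iterated blow-up of $M^2_h\times[0,\infty)_\epsilon$. Each lift is then just a blow-down followed by the product projection.
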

\begin{proof}It is immediate that this projection map is a b-map (indeed a b-fibration) from $M_{\epsilon}^2$ to $M^2$, which then lifts to a b-map from $\hat M_{\epsilon}^2$ to $M^2$. Creating $M^2_{rh}$ in the range means we must blow up the preimage of the new front face sf $\subseteq M^2_{rh}$ in $\hat M_{\epsilon}^2$; that pre-image is precisely $P_3$, which is blown up. So we have a b-map from $M^2_{rh,\epsilon}$ to $M^2_{rh}$. Similarly, from here, we must blow up the preimage of the new front face td $\subseteq M^2_h$ in $M^2_{rh,\epsilon}$; that pre-image is $P_4$, and it is also blown up, yielding the desired b-map from $M^2_{h,\epsilon}$ to $M^2_h$.
\end{proof}

\subsection{The triple space}
As is usual in this business, we will need to compose, in the sense of Duhamel's principle, operators whose kernels are pc on the double space. In order to do this, we create a triple space, with three spatial variables and two time variables. Unlike the previous spaces, this one is artificial. Our construction is closely modeled on the version in \cite{NRS25}. There are no vertices on the boundary to worry about, but we do have $\epsilon$.

First define
\begin{equation}
M^3_{\epsilon}:=\Omega\times\Omega\times\Omega\times[0,1)_{T'}\times[0,1)_{T''}\times[0,1)_{\epsilon},
\end{equation}
labeling its boundary hypersurfaces $F_{TL}$ for $T''=0$, $F_{TR}$ for $T'=0$, $F_{Z0}$ for $\epsilon=0$, and $F_{bii}$, $F_{ibi}$, and $F_{iib}$ for the first, second, and third spatial variables being on the boundary. We also define three projections $\pi_C$, $\pi_L$, and $\pi_R$ to $M^2_{\epsilon}$, as in \cite[(3.3)-(3.5)]{NRS25}:
\begin{equation}\label{eq:pic}
\pi_C(z,z',z'',T',T'',\epsilon)=(z,z'',\sqrt{(T')^2+(T'')^2},\epsilon);
\end{equation}
\begin{equation}\label{eq:pil}
\pi_L(z,z',z'',T',T'',\epsilon)=(z,z',T',\epsilon);
\end{equation}
\begin{equation}\label{eq:pir}
\pi_R(z,z',z'',T',T'',\epsilon)=(z',z'',T'',\epsilon).
\end{equation}
Then, putting technical details aside, if a family of operators $A=A_{\epsilon}$ has Schwarz kernel family $K_A$ on $M^2_{\epsilon}$ and $B=B_{\epsilon}$ has Schwarz kernel family $K_B$ on $M^2_{\epsilon}$, the Schwarz kernel of the composed family of operators is
\[K_C=(\pi_C)_*(\pi_L^*K_A\pi_R^*K_B).\]

We need to lift these projection maps to be maps onto $M^2_{rh,\epsilon}$ and (for $\pi_L$) onto $M^2_{h,\epsilon}$. This requires a series of blow-ups in the domain space. First we blow up $\mathcal O=\{T'=T''=0\}$ in $M^3_{\epsilon}$, calling the new front face $F_{\mathcal O}$. This blowup does not affect the $\epsilon$ variable, and so precisely as in \cite[Lemma 3.11]{NRS25}, the three projections $\pi_C$, $\pi_L$, and $\pi_R$ lift by continuity to $\Pi_C$, $\Pi_L$, and $\Pi_R$ from $[M^3_{\epsilon};\mathcal O]$ to $M^2_{\epsilon}$, and $\Pi_C$, $\Pi_L$, and $\Pi_R$ are $b$-fibrations.

Now we need to do some additional blow-ups to obtain a space $\hat M^3_{\epsilon}$ where $\Pi_C$, $\Pi_L$, and $\Pi_R$ are $b$-fibrations onto $\hat M^2_{\epsilon}$. This is done by a sequence of blow-ups affecting only the non-time variables in $M_\epsilon^3$ and thus commutes with the blow-up of $\mathcal O$. We blow up, for all values of the time variables,
\begin{itemize}
\item First $\partial\Omega\times\partial\Omega\times\partial\Omega\times \{\epsilon=0\}$. This is the intersection $F_{bii}\cap F_{ibi}\cap F_{iib}\cap F_{Z0}$. Call the new face $F_{bbb,b}$, where the first three subscripts denote that the variables are on the boundary and the last denotes that $\epsilon=0$.
\item Next the four now-disjoint submanifolds where we have either replaced a $\partial\Omega$ by $\Omega$ or allowed $\epsilon$ to be nonzero. Call the new faces $F_{bbi,b}$ et cetera.
\item Finally, the six now-disjoint submanifolds where we have made two such replacements. Call the new faces $F_{bii,b}$, et cetera.
\end{itemize}
This produces $\hat M_{\epsilon}^3$. This sequence of blow-ups is an example of a b-stretched product from \cite[Section 4]{melrosesinger}, for which the projections lift to b-fibrations \cite[Proposition 4.4]{melrosesinger}. The maps $\Pi_L$, $\Pi_C$, and $\Pi_R$ lift to products of a b-fibration in the time variable and b-fibrations in the spatial variables-plus-epsilon; therefore they are b-fibrations onto $\hat M_{\epsilon}^2$.

We need to lift these maps to be b-fibrations and/or b-maps onto $M_{rh,\epsilon}^2$. To do this we use \cite[Lemma 2.5]{hmm} repeatedly. For example, creating $M_{rh,\epsilon}^2$ from $M_{\epsilon}^2$ involves blowing up $P_1$ and then $P_3$. The lift of $P_1$ is the set\footnote{Instead of writing $\{x=x''\}$, one could more invariantly write $\{z=z''\in\partial\Omega\}$; however this is then redundant with $F_{bib,b}$, so we write it as is for emphasis.}
\[\Pi_C^*(P_1)=\mathcal O_{bib,b}:=F_{\mathcal O}\cap F_{bib,b}\cap \{x=x''\}.\]
The lift of $P_3$ is the lift of the set
\[\Pi_C^*(P_3)=\mathcal O_{bib,i}:=F_{\mathcal O}\cap F_{bib,i}\cap \{x=x''\}.\]
Since we blow up $P_1$ and then $P_3$, \cite[Lemma 2.5]{hmm} yields that the map $\Pi_C$ lifts to a well-defined b-fibration
\[\Pi_C:[\hat M^3_{\epsilon};\mathcal O_{bib,b};\mathcal O_{bib,i}]\to M^2_{rh,\epsilon}.\]
Call the newly created boundary faces $F_{\mathcal O,bib,b}$ and $F_{\mathcal O,bib,i}$.

We have to also get $\Pi_L$ and $\Pi_R$ to be b-maps onto $M^2_{rh,\epsilon}$. To this end, we compute
\[\Pi_L^*(P_1)=\mathcal O_{bbi,b}\cup \mathcal R_{bbi,b} := (F_{\mathcal O}\cap F_{bbi,b}\cap\{x=x'\}) \cup (F_{TR}\cap F_{bbi,b}\cap \{x=x'\}),\]
\[\Pi_L^*(P_3)=\mathcal O_{bbi,i}\cup \mathcal R_{bbi,i} := (F_{\mathcal O}\cap F_{bbi,i}\cap\{x=x'\}) \cup (F_{TR}\cap F_{bbi,i}\cap \{x=x'\}),\]
\[\Pi_R^*(P_1)=\mathcal O_{ibb,b}\cup \mathcal L_{ibb,b} := (F_{\mathcal O}\cap F_{ibb,b}\cap \{x'=x''\}) \cup (F_{TL}\cap F_{ibb,b}\cap \{x'=x''\}),\]
\[\Pi_R^*(P_1)=\mathcal O_{ibb,b}\cup \mathcal L_{ibb,b} := (F_{\mathcal O}\cap F_{ibb,i}\cap \{x'=x''\}) \cup (F_{TL}\cap F_{ibb,i}\cap \{x'=x''\}).\]
Observe that each of these is a union of two p-submanifolds. Finally, we will eventually need $\Pi_L$ to be a b-map onto $M^2_{h,\epsilon}$, so we compute
\[\Pi_L^*(P_2) = \mathcal O_{diag,0} \cup \mathcal R_{diag,0} :=(F_{\mathcal O}\cap \{z=z'\}\cap F_{Z0})\cup (F_{TR}\cap \{z=z'\}\cap F_{Z0});\]
\[\Pi_L^*(P_4) = \mathcal O_{diag}\cup \mathcal R_{diag} := (F_{\mathcal O}\cap \{z=z'\})\cup (F_{TR}\cap \{z=z'\}).\] 

In order to do all of the double-diagonal blow-ups at $F_{\mathcal O}$, we first do the triple-diagonal blow-up. Specifically, let
\[\hat M^3_{1,\epsilon}=[\hat M^3_{\epsilon};\mathcal O_{bib,b};\mathcal O_{bib,i};\mathcal O_{bbb,b};\mathcal O_{bbb,i}].\]
Here $\mathcal O_{bbb,b}=F_{\mathcal O}\cap\{x=x'=x''\}\cap F_{bbb,b}$ and $\mathcal O_{bbb,i}=F_{\mathcal O}\cap\{x=x'=x''\}\cap F_{bbb,i}$. 
This is a blow-up of $\hat M_{\epsilon}^3$. 
As in \cite{NRS25}, repeated use of \cite[Lemma 2.7]{hmm} shows that $\Pi_C$ lifts to a b-fibration from $\hat M^3_{1,\epsilon}$ to $M^2_{rh,\epsilon}$; the face mappings are $F_{\mathcal O,bib,b}\to \sf_0$, $F_{\mathcal O,bib,i}\to \sf$, $F_{\mathcal O,bbb,b}\to \sf_0$, $F_{\mathcal O,bbb,i}\to \sf$. By \cite[Lemma 3.9]{NRS25}, it is also the case that
\[\hat M^3_{1,\epsilon}=[\hat M^3_{\epsilon};\mathcal O_{bbb,b};\mathcal O_{bbb,i};\mathcal O_{bib,b};\mathcal O_{bib,i}].\]

We now construct the full triple heat space in two steps. First, we create a reduced triple heat space by performing the following blow-ups:
\[M_{rh,\epsilon}^3 = [\hat M_{1,\epsilon}^3;R_{bbi,b};R_{bbi,i};L_{ibb,b};L_{ibb,i};R_{diag,0},R_{diag}].\]
By \cite[Lemma 2.7]{hmm}, $\Pi_C$ lifts to a b-fibration from $M^3_{cen,\epsilon}\to M^2_{rh,\epsilon}$; the six new faces are $F_{R,bbi,b}$, $F_{R,bbi,i}$, $F_{L,ibb,b}$, $F_{L,ibb,i}$, $F_{R,diag,b}$, and $F_{R,diag}$, mapped into hrf, rf, hlf, lf, zf, and the interior respectively.

Finally, we create the full triple heat space
\[M_{h,\epsilon}^3 = [M_{rh,\epsilon}^3;\mathcal O_{bbi,b};\mathcal O_{bbi,i};\mathcal O_{ibb,b};\mathcal O_{ibb,i};\mathcal O_{diag,0};\mathcal O_{diag}].\]
The new faces are $F_{\mathcal O,bbi,b}$, $F_{\mathcal O,bbi,i}$, $F_{\mathcal O,ibb,b}$, $F_{\mathcal O,ibb,i}$, $F_{\mathcal O,diag,b}$, $F_{\mathcal O,diag}$.
$\Pi_C$ lifts, by composition with the blow-down map, to a well-defined b-map from $M^3_{rh,\epsilon}\to M^2_{rh,\epsilon}$, though this is no longer a b-fibration.

As in \cite[Proposition 3.13]{NRS25}, we claim that $\Pi_L$ and $\Pi_R$ also lift to well-defined b-maps from $M^3_{h,\epsilon}$ to $M^2_{h,\epsilon}$ and $M^2_{rh,\epsilon}$ respectively. Indeed, starting from $\hat M_{\epsilon}^3$, we needed to blow up
\begin{itemize}
\item For $\Pi_L$: $\mathcal O_{bbi,b}$, $\mathcal O_{bbi,i}$, $\mathcal R_{bbi,b}$, $\mathcal R_{bbi,i}$, $\mathcal R_{diag,0}$, $\mathcal R_{diag}$, $\mathcal O_{diag,0}$, and $\mathcal O_{diag}$.
\item For $\Pi_R$: $\mathcal O_{ibb,b}$, $\mathcal O_{ibb,i}$, $\mathcal L_{ibb,b}$, and $\mathcal L_{ibb,i}$.
\end{itemize}
Since all of these have been blown up in $M^3_{h,\epsilon}$, we are done. This may also be checked by computing the pullback of the boundary defining functions of each hypersurface in the image space; each is a product of boundary defining functions on $M^3_{rh,\epsilon}$ (but not, for example, boundary defining functions on $M_{rh,\epsilon}^3$).

\subsection{Combinatorics of composition}

\begin{lemma}\label{lem:combinatorics} The entries of the exponent matrices for the $b$-maps $\Pi_L:M^3_{rh,\epsilon}\to M^2_{h,\epsilon}$, $\Pi_R:M^3_{rh,\epsilon}\to M^2_{rh,\epsilon}$, and $\Pi_C:M_{cen,\epsilon}^3\to M^2_{rh,\epsilon}$ are all either zero or one, and the following are the entries which are one:

For $\Pi_L$: $(\mathcal F_{TR},\tf)$, $(\mathcal F_{Z0},\zf)$, $(\mathcal F_{bii},\rf)$, $(\mathcal F_{ibi},\lf)$, $(\mathcal F_{bbb,b},\hff)$, $(\mathcal F_{bbb,i},\hnf)$, $(\mathcal F_{bbi,b},\hff)$, $(\mathcal F_{bib,b},\hrf)$, $(\mathcal F_{ibb,b},\hlf)$, $(\mathcal F_{bii,b},\hrf)$, $(\mathcal F_{ibi,b},\hlf)$, $(\mathcal F_{iib,b},\zf)$, $(\mathcal F_{bbi,i},\hnf)$, $(\mathcal F_{bib,i},\rf)$, $(\mathcal F_{ibb,i},\lf)$, $(\mathcal F_{\mathcal O},\tf)$, $(\mathcal F_{\mathcal O,bbb,b},\sf_0)$, $(\mathcal F_{\mathcal O,bbb,i},\sf)$, $(\mathcal F_{\mathcal O,bbi,b},\sf_0)$, $(\mathcal F_{\mathcal O,bbi,i},\sf)$, $(\mathcal F_{\mathcal O,bib,b},\hrf)$, $(\mathcal F_{\mathcal O,bib,b},\tf)$, $(\mathcal F_{\mathcal O,bib,i},\rf)$, $(\mathcal F_{\mathcal O,bib,i},\tf)$, $(\mathcal F_{\mathcal O,ibb,b},\hlf)$, $(\mathcal F_{\mathcal O,ibb,b},\tf)$, $(\mathcal F_{\mathcal O,ibb,i},\lf)$, $(\mathcal F_{\mathcal O,ibb,i},\tf)$, $(\mathcal F_{\mathcal O,diag,b},\td_0)$, $(\mathcal F_{\mathcal O,diag},\td)$, $(R_{bbi,b},\sf_0)$, $(R_{bbi,i},\sf)$, $(L_{ibb,b},\hlf)$, $(L_{ibb,i},\lf)$, $(R_{diag,b},\td_0)$, $(R_{diag},\td)$,

with $F_{TL}$, $F_{iib}$ mapping to the interior.

For $\Pi_R$: $(\mathcal F_{TL},\tf)$, $(\mathcal F_{Z0},\zf)$, $(\mathcal F_{ibi},\rf)$, $(\mathcal F_{iib},\lf)$, 
$(\mathcal F_{bbb,b},\hff)$, $(\mathcal F_{bbb,i},\hnf)$, $(\mathcal F_{bbi,b},\hrf)$, $(\mathcal F_{bib,b},\hlf)$, $(\mathcal F_{ibb,b},\hff)$, $(\mathcal F_{bii,b},\zf)$, $(\mathcal F_{ibi,b},\hrf)$, $(\mathcal F_{iib,b},\hlf)$, 
$(\mathcal F_{bbi,i},\rf)$, $(\mathcal F_{bib,i},\lf)$, $(\mathcal F_{ibb,i},\hnf)$, 
$(\mathcal F_{\mathcal O},\tf)$, $(\mathcal F_{\mathcal O,bbb,b},\sf_0)$, $(\mathcal F_{\mathcal O,bbb,i},\sf)$, $(\mathcal F_{\mathcal O,bbi,b},\hrf)$, $(\mathcal F_{\mathcal O,bbi,b},\tf)$, $(\mathcal F_{\mathcal O,bbi,i},\rf)$, $(\mathcal F_{\mathcal O,bbi,i},\tf)$, $(\mathcal F_{\mathcal O,bib,b},\hlf)$, $(\mathcal F_{\mathcal O,bib,b},\tf)$, $(\mathcal F_{\mathcal O,bib,i},\lf)$, $(\mathcal F_{\mathcal O,bib,i},\tf)$, $(\mathcal F_{\mathcal O,ibb,b},\sf_0)$, $(\mathcal F_{\mathcal O,ibb,i},\sf)$, $(\mathcal F_{\mathcal O,diag,b},\tf)$, $(\mathcal F_{\mathcal O,diag,b},\zf)$, $(\mathcal F_{\mathcal O,diag},\tf)$, $(R_{bbi,b},\hrf)$, $(R_{bbi,i},\rf)$, $(L_{ibb,b},\sf_0)$, $(L_{ibb,i},\sf)$, $(R_{diag,b},\zf)$,

with $F_{TR}$, $F_{bii}$, and $R_{diag}$ mapping to the interior.

For $\Pi_C$: $(\mathcal F_{Z0}, \zf)$, $(\mathcal F_{bii}, \rf)$, $(\mathcal F_{iib}, \lf)$, 
$(\mathcal F_{bbb,b}, \hff)$, $(\mathcal F_{bbb,i},\hnf)$, $(\mathcal F_{bbi,b}, \hrf)$, $(\mathcal F_{bib,b}, \hff)$, $(\mathcal F_{ibb,b}, \hlf)$, $(\mathcal F_{bii,b},\hrf)$, $(\mathcal F_{ibi,b}, \zf)$, $(\mathcal F_{iib,b}, \hlf)$, 
$(\mathcal F_{bbi,i},\rf)$, $(\mathcal F_{bib,i},\hnf)$, $(\mathcal F_{ibb,i},\lf)$, 
$(\mathcal F_{\mathcal O}, \tf)$, $(\mathcal F_{\mathcal O,bbb,b},\sf_0)$, $(\mathcal F_{\mathcal O,bbb,i},\sf)$, $(\mathcal F_{\mathcal O,bib,b},\sf_0)$, $(\mathcal F_{\mathcal O,bib,i},\sf)$, $(R_{bbi,b},\hrf)$, $(R_{bbi,i},\rf)$, $(L_{ibb,b},\hlf)$, $(L_{ibb,i},\lf)$, $(R_{diag,b},\zf)$,

with $F_{TL}$, $F_{TR}$, $\mathcal F_{ibi}$, and $R_{diag}$ mapping to the interior.

\end{lemma}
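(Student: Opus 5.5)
The plan is to compute, for each boundary hypersurface $F$ of the triple space and each boundary hypersurface $G$ of the relevant target double space, the exponent $e(F,G)$ with which $F$ appears in the pullback of a defining function $\rho_G$. We use that exponent matrices behave well under blow-up. On the unblown product $M^3_\epsilon\to M^2_\epsilon$ the projections are coordinate maps, so the pullbacks are immediate. For $\Pi_L$ we have $\Pi_L^*\rho_{\tf}=T'$, $\Pi_L^*\rho_{\zf}=\epsilon$, $\Pi_L^*\rho_{\rf}=y$ and $\Pi_L^*\rho_{\lf}=y'$. Similarly $\Pi_R$ picks out $T''$, $\epsilon$, $y'$, $y''$, and $\Pi_C$ picks out $\sqrt{(T')^2+(T'')^2}$, $\epsilon$, $y$, $y''$. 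Every exponent on $M^3_\epsilon$ is therefore $0$ or $1$.

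Each subsequent blow-up is handled by the standard rule. If $P$ is a p-submanifold blown up in the domain with front face $F_P$, then $e(F_P,G)$ equals the sum of $e(F,G)$ over the boundary faces $F$ containing $P$, taken for the old target face $G$. If $P$ is also the preimage of a center blown up in the target (the situation of \cite[Lemma 2.5]{hmm}), then the pullback of the new target defining function $\rho_{G_{new}}$ vanishes exactly to first order on $F_P$. The old defining functions are then corrected by dividing by $\rho_{G_{new}}$.

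I would proceed through the blow-ups in the order used to build $M^3_{h,\epsilon}$.

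\textbf{(i) The blow-up of $\mathcal O$.} This gives $F_{\mathcal O}$, with exponent $1$ to $\tf$ for all three maps, since $\sqrt{(T')^2+(T'')^2}$ vanishes simply there.

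\textbf{(ii) The b-stretched product blow-ups producing $\hat M^3_\epsilon$.} For $F_{abc,d}$, the target face is read off from which two of the three spatial slots the map keeps. For example, $\Pi_L$ keeps slots $1,2$, so $F_{bbi,b}$ maps to $\hff$ and $F_{bii,b}$ maps to $\hrf$. Applying this uniformly gives all the $F_{\cdot,\cdot}$ entries. Checking that each front face picks up exactly one target face, rather than a sum, uses the target blow-ups $\hff,\hnf,\hrf,\hlf$ in $\hat M^2_\epsilon$ and the b-stretched product structure of \cite{melrosesinger}.

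\textbf{(iii) The diagonal-type blow-ups.} These are $\mathcal O_{\cdot}$, $\mathcal R_{\cdot}$, $\mathcal L_{\cdot}$ and the diagonal ones. For each center I identify whether it is a component of the preimage of one of $P_1,\dots,P_4$ under the given map. If it is, the front face maps with exponent one to the corresponding $\sf_0,\sf,\td_0,\td$, and its exponents to the faces $\tf$, $\rf$, etc. containing $P_i$ drop by one. If it is not, the front face's exponents are the sum over the faces containing the center.

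The main case requiring care is the faces $F_{\mathcal O,bib,\cdot}$ under $\Pi_L$ and $\Pi_R$. The center $\mathcal O_{bib,\cdot}$ lies in $F_{\mathcal O}$ (exponent one to $\tf$) and in $F_{bib,\cdot}$ (exponent one to $\hrf$/$\rf$ for $\Pi_L$). It is not a preimage of a $P_i$ under $\Pi_L$ because $x'$ is unconstrained, so the exponents add. The result is two ones, which accounts for the listed pairs $(\mathcal F_{\mathcal O,bib,b},\hrf)$ and $(\mathcal F_{\mathcal O,bib,b},\tf)$. Likewise $\mathcal O_{diag,\cdot}$ under $\Pi_R$ gives both $\tf$ and $\zf$.

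The main obstacle is making sure no exponent $2$ ever arises. This would happen if a center lay in two domain faces both mapping to the same target face. I would verify that this never occurs by listing, for each center, the faces containing it. The key point is that the nested order of blow-ups (triple before double, $\epsilon=0$ before general $\epsilon$) separates such pairs beforehand; for instance, $F_{\mathcal O}$ and $F_{TR}$ are already separated. Finally, faces not appearing in any pair have all exponents zero and therefore map to the interior, such as $F_{TL},F_{iib}$ for $\Pi_L$.
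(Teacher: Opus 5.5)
Your proposal takes essentially the paper's own route: the paper states Lemma~\ref{lem:combinatorics} without a separate proof, justifying it only by the remark that one can check it by computing the pullbacks of the target boundary defining functions, and your bookkeeping of exponent matrices through the blow-up sequence (sum over the faces containing each center; when the center maps into one of $P_1,\dots,P_4$, record a single $1$ at $\sf_0,\sf,\td_0,\td$ and lower by one the entries for the faces containing that $P_i$) is exactly that computation. Two small corrections: first, $F_{\mathcal O}$ and $F_{TR}$ are not separated in $[M^3_\epsilon;\mathcal O]$ (they meet in a corner and both map to $\tf$ under $\Pi_L$), so what rules out an entry $2$ there is that no center lies in $F_{\mathcal O}\cap F_{TR}$; second, the test in step (iii) should be whether the center maps into $P_i$, not whether it is a whole component of the preimage, since for instance $\mathcal O_{bbb,b}$ is only the triple-diagonal part of $F_{\mathcal O}\cap F_{bbb,b}\cap\{x=x'\}\subset\Pi_L^{-1}(P_1)$, yet its front face must still receive the single entry $\sf_0$.
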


\subsection{Densities}

We consider our heat kernels as full right densities in the time and space variables:
\[K_A(t',z,z',\epsilon)\, dt\, dz';\quad K_B(t'',z',z'',\epsilon)\, dt''\, dz''.\]
If we multiply the two and then integrate over $t=t'+t''$ and $z'$ we get
\[K_{A\circ B}(t,z,z'',\epsilon)\, dt\, dz''.\]
In order to use the pushforward theorem we need to convert to full densities, which corresponds to multiplying by the formal factor $dz\, d\epsilon$ after the multiplication. We must find the relationship between the natural metric densities and the canonical full densities on $M_{rh,\epsilon}^3$ and $M^2_{rh,\epsilon}$. Define:
\[\nu(M_{\epsilon}^2) := dz\, dz'\, dT\, d\epsilon;\quad \nu(M_{\epsilon}^3) := dz\, dz'\, dz''\, dT'\, dT''\, d\epsilon.\]

\begin{proposition} We have
\[\beta^*(\nu(M_{\epsilon}^2)) = \rho_{\sf_0}^{m+2}\rho_{\sf}^{m+1}\rho_{\hff}^2\rho_{\hnf}\rho_{\hlf}\rho_{\hrf}\nu(M^2_{rh,\epsilon});\]
\[\beta^*(\nu(M_{\epsilon}^3)) =\rho_{bbb,b}^5\rho_{bbi,b}^4\rho_{bib,b}^4\rho_{ibb,b}^4\rho_{bbb,i}^4\rho_{bii,b}^3\rho_{ibi,b}^3\rho_{iib,b}^3\rho_{bbi,i}^3\rho_{bib,i}^3\rho_{ibb,i}^3\]
\[\rho_{\mathcal O}\rho_{\mathcal O,bbb,b}^{2m+3}\rho_{\mathcal O,bbb,i}^{2m+2}\rho_{\mathcal O,bib,b}^{m+3}\rho_{\mathcal O,bib,i}^{m+2}\rho_{R,bbi,b}^{m+2}\rho_{R,bbi,i}^{m+1}\rho_{L,ibb,b}^{m+2}\rho_{L,ibb,i}^{m+1}\rho_{R,diag,0}^{m+1}\rho_{R,diag}^m\nu(M^3_{rh,\epsilon}).\]
\end{proposition}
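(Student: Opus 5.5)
The plan is to use the standard fact about how a smooth density pulls back under a blow-up. If $Y$ is a p-submanifold of codimension $k$ in a manifold with corners, then the blow-down map $\beta:[M;Y]\to M$ satisfies $\beta^*\nu = \rho_{\mathrm{ff}}^{k-1}\nu'$. Here $\nu'$ is a smooth nonvanishing density on $[M;Y]$ and $\rho_{\mathrm{ff}}$ is a defining function for the new front face. One can check this in projective coordinates: if $Y=\{w_1=\dots=w_k=0\}$, set $w_1=\rho$ and $w_j=\rho s_j$, which gives $dw_1\cdots dw_k=\rho^{k-1}d\rho\, ds_2\cdots ds_k$. Earlier faces that $Y$ meets are simply relabeled, so their exponents are unchanged. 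The only subtlety is parabolic blow-ups, those homogeneous with weight two in the time variable $T$. There the exponent is the total weighted codimension minus one, with $T$ counted as weight two. We will apply this rule iteratively along the sequence of blow-ups defining $M^2_{rh,\epsilon}$ and $M^3_{rh,\epsilon}$. At each stage we record the codimension of the centre, in the appropriate homogeneity, inside the space constructed so far.

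For the double space the steps are as follows.
\begin{enumerate}
\item Blow up $\partial\Omega\times\partial\Omega\times[0,\infty)\times\{0\}$, which has codimension 3 (the variables $y,y',\epsilon$). This produces $\rho_{\hff}^2$.
\item Blow up the lift of $\{y=y'=0\}$, which has codimension 2. This produces $\rho_{\hnf}^1$.
\item Blow up hrf and hlf, each cut out by $y$ or $y'$ together with $\epsilon$. This produces $\rho_{\hrf}\rho_{\hlf}$.
\item Blow up $P_1$, which lies in hff. In local coordinates near hff it is cut out by the $m-1$ differences $x-x'$, the time variable $T$, and two further projective variables: the defining function of hff and a ratio such as $y/\epsilon$. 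This accounts for $m+2$ conditions, which is the exponent $\rho_{\sf_0}^{m+2}$.
\item Blow up $P_3$ inside hnf similarly. It needs one fewer condition, since $\epsilon$ is free, and this produces $\rho_{\sf}^{m+1}$.
\end{enumerate}
We must check that in each projective chart the later centres are cut out by the claimed number of independent coordinates, taking the correct homogeneity into account. This is essentially a re-run of the corresponding computation in \cite{NRS25}, with one extra $\epsilon$ direction added. Multiplying the factors gives the first formula.

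For the triple space we follow the construction in the order it was given. First blow up $\mathcal O$ (codimension $2$ in $T',T''$), which gives $\rho_{\mathcal O}$. Next come the b-stretched-product blow-ups in $(y,y',y'',\epsilon)$. The centre giving $F_{bbb,b}$ has codimension 4, hence exponent $3$. The next four centres have codimension 3 in the current space, and the final six have codimension 2. Then we blow up the diagonal-type centres $\mathcal O_{bbb,b}$, $\mathcal O_{bbb,i}$, $\mathcal O_{bib,b}$, $\mathcal O_{bib,i}$, $R_{bbi,\cdot}$, $L_{ibb,\cdot}$, $R_{diag,\cdot}$. For each we count the boundary-coordinate differences ($m-1$ or $2(m-1)$ of them, or $m$ for an interior diagonal), the time and $\epsilon$ directions, and the extra projective variables. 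For example, $\mathcal O_{bbb,b}$ involves $2(m-1)$ differences together with the front-face data, which gives $2m+3$.

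We expect the main obstacle to be the bookkeeping of exponents in the triple space. After the first blow-up the next centre is no longer a product in the original coordinates. Its exponent depends on which earlier front faces it meets, and the parabolic homogeneity at $F_{\mathcal O}$ can shift the count by one. We would therefore verify each exponent in one explicit projective chart near the deepest corner, where all relevant faces meet, and check it against the $\epsilon$-free version of the formula in \cite{NRS25}. The power of $\rho_{bbb,b}$ in particular has to be double-checked, since it collects contributions from several of the earlier b-stretched-product steps. The last step is to multiply the factors.
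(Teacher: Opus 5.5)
Your strategy of iterating the rule that a codimension-$k$ centre contributes $\rho_{\mathrm{ff}}^{k-1}$ is the paper's. But the proposal does not establish the displayed formula for $M^3_\epsilon$, and it breaks exactly at the step you flag.

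\textbf{The stretched-product exponents.} Your own count gives $\rho_{bbb,b}^3$, squares for $F_{bbi,b},F_{bib,b},F_{ibb,b},F_{bbb,i}$, and first powers for the last six faces. The statement asserts $5$, $4$ and $3$. Your hope that $\rho_{bbb,b}$ ``collects contributions from earlier b-stretched-product steps'' cannot be realised. First, $F_{bbb,b}$ is the first of those blow-ups. Second, blowing up $Y$ pulls $\rho_F^{a}$ back to $\rho_F^{a}\rho_{\mathrm{ff}}^{a}$ if $Y\subset F$, and to $\rho_F^{a}$ otherwise. So an exponent is frozen once its face exists. Accumulated contributions land only on the new front face, whose exponent is $(\operatorname{codim}Y-1)+\sum_{F\supset Y}a_F$.

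\textbf{The missing inheritance rule.} Your sketch never states this rule, and it is what actually produces the large exponents:
\begin{itemize}
\item $\mathcal O_{bbb,b}$ has codimension $2m$ in $\hat M^3_\epsilon$ and lies in $F_{\mathcal O}$ and $F_{bbb,b}$, giving $(2m-1)+1+3=2m+3$.
\item $\mathcal O_{bib,b}$ has codimension $m+1$ and lies in $F_{\mathcal O}$ and $F_{bib,b}$, giving $m+1+2=m+3$.
\item $R_{bbi,b}$ gives $m+0+2=m+2$.
\end{itemize}
Equivalently, in all cases here the exponent is $\operatorname{codim}Y_0-1$, where $Y_0$ is the submanifold of the unblown-up product whose lift is the centre.

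The same rule is missing from your double-space count. For $P_1$ you equate the number of conditions with the exponent, contrary to your own $k-1$ rule, and you include a spurious condition on $y/\epsilon$. The coordinates $X,\xi,\xi',\tilde\epsilon,T$ at $\sf_0$ show that the centre is $\{x=x'\}\cap\tf\cap\hff$, of codimension $m+1$. This gives $m$, plus $2$ inherited from $\rho_{\hff}^2$. Likewise $P_3$ gives $m$, plus $1$ from $\rho_{\hnf}$.

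Drop the parabolic-weight caveat. $T=\sqrt t$ is already the variable, every centre is blown up homogeneously, and the $dt$ versus $dT$ discrepancy is handled separately by the factors $2T$ and $4T'T''$. Counting $T$ with weight two would give wrong exponents.

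\textbf{The statement itself is off.} Your count for the stretched-product faces is the right one, and the statement is too large by $2$ in all eleven of those exponents.
\begin{itemize}
\item The numbers $5,4,3$ are exactly what $\dim W-\dim F-1$ gives if the two time directions are left out of the centres, e.g.\ $(3m+3)-3(m-1)-1=5$. Yet these centres are blown up for all values of $T',T''$.
\item The statement is internally inconsistent. By the inheritance rule, $\rho_{bbb,b}^5$ and $\rho_{bbi,b}^4$ would force $\rho_{\mathcal O,bbb,b}^{2m+5}$ and $\rho_{R,bbi,b}^{m+4}$, not the listed $2m+3$ and $m+2$.
\item Independently, compose two kernels of orders $a,b$ at $\hff$. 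The part $y'\lesssim\epsilon$ of the $z'$-integral has order $a+b+1$ at $\hff$. That is $3-2$, not $5-2$.
\end{itemize}
So your method proves the formula with
\[
\rho_{bbb,b}^3\rho_{bbi,b}^2\rho_{bib,b}^2\rho_{ibb,b}^2\rho_{bbb,i}^2\rho_{bii,b}\rho_{ibi,b}\rho_{iib,b}\rho_{bbi,i}\rho_{bib,i}\rho_{ibb,i}
\]
in place of the first eleven factors, with all other exponents as stated. You should assert and prove that corrected identity rather than try to recover the exponent $5$.
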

\begin{proof} The proof follows from repeated iteration of [ARS C.5], which says that when blowing up a submanifold $F$ of a manifold with corners $W$, blow-up introduces a factor of $\rho^{\dim(W)-\dim(F)-1}$, with $\rho$ the boundary defining function for the new front face.

For $M_{\epsilon}^2$, we have $\dim M_{\epsilon}^2 = 2m+2$. The hff, hnf, hlf, and hrf blow-ups have codimensions 3, 2, 2, and 2 respectively and thus introduce $\rho_{\hff}^2\rho_{\hnf}\rho_{\hlf}\rho_{\hrf}$, $P_1$ introduces $\rho_{\sf_0}^{m+2}$ (where $m=\dim\Omega$), and $P_3$ introduces $\rho_{\sf}^{m+1}$.
For $M_{\epsilon}^3$, the dimension is $3m+3$. We may compute
\[\beta^*(\nu(M_{\epsilon}^3)) = \rho_{bbb,b}^5\rho_{bbi,b}^4\rho_{bib,b}^4\rho_{ibb,b}^4\rho_{bbb,i}^4\rho_{bii,b}^3\rho_{ibi,b}^3\rho_{iib,b}^3\rho_{bbi,i}^3\rho_{bib,i}^3\rho_{ibb,i}^3\rho_{\mathcal O}\nu(\hat M_{\epsilon}^3).\]
Then
\[\beta^*(\nu(\hat M_{\epsilon}^3)) = \rho_{\mathcal O,bbb,b}^{2m+2}\rho_{\mathcal O,bbb,i}^{2m+1}\rho_{\mathcal O,bib,b}^{m+2}\rho_{\mathcal O,bib,i}^{m+1}\nu(\hat M_{1,\epsilon}^3),\]
and
\[\beta^*(\nu(\hat M_{1,\epsilon}^3))=\rho_{R,bbi,b}^{m+2}\rho_{R,bbi,i}^{m+1}\rho_{L,ibb,b}^{m+2}\rho_{L,ibb,i}^{m+1}\rho_{R,diag,0}^{m+1}\rho_{R,diag}^m\nu(M_{rh,\epsilon}^3).\]
Using the fact that $\rho_{\mathcal O}$ lifts to a product of many defining functions on $M^3_{rh,\epsilon}$ yields the result.
\end{proof}

Recall also as in \cite{NRS25}
\[K_{A\circ B}\, dt\, dz\, dz'' = 2TK_{A\circ B}\nu(M_{\epsilon}^2);\]
\[K_A(t',z,z')K_B(t'',z',z'')\, dt\, dt''\, dz\, dz'\, dz'' = 4T'T''K_A(t',z,z)K_B(t'',z',z'')\nu(M_{\epsilon}^3).\]

\subsection{Composition}
The following is our composition theorem. Recall that $\mathcal A(X)$ is the space of polyhomogeneous conormal functions on a manifold with corners $X$. If we wish to specify an index family $\mathcal F$, we write $\mathcal A^{\mathcal F}(X)$.
\begin{theorem}\label{thm:composition}
Suppose that $A$ is a family of operators with Schwartz kernel in $\mathcal A(M_{\epsilon,h}^2)$ vanishing to infinite order at tf and with index sets
\[(A_{\rf},A_{\lf},A_{\zf},A_{\hff},A_{\hnf},A_{\hrf},A_{\hlf},A_{\sf},A_{\sf_0},A_{\td},A_{\td_0}).\]
Suppose that $B$ is a family of operators with Schwartz kernel in $\mathcal A(M_{\epsilon,h}^2)$ vanishing to infinite order at tf and with index sets 
\[(B_{\rf},B_{\lf},B_{\zf},B_{\hff},B_{\hnf},B_{\hrf},B_{\hlf},B_{\sf},B_{\sf_0},B_{\td}=\infty,B_{td_0}=\infty).\]
Suppose that 
\[A_{\lf}+B_{\rf} >-1\textrm{  and  } A_{\td}>-m-2.\]
Then the Schwartz kernel of the composition $A\circ B$ is an element of $\mathcal A^{\mathcal F}(M_{\epsilon,h}^2)$, where $\mathcal F$ has index sets
\begin{align*}
A_{\rf}\overline{\cup}(A_{\hnf} + B_{\rf}+3)\overline{\cup}(A_{\sf}+B_{\rf}+m+2)\textrm{ at }\rf,\\
B_{lf}\overline{\cup}(A_{\lf} + B_{\hnf}+3)\overline{\cup}(A_{\lf}+B_{\sf}+m+2)\textrm{ at }\lf,\\
(A_{\zf}+B_{\zf})\overline{\cup}(A_{\hlf}+B_{\hrf}+3)\overline{\cup}(A_{\td_0}+B_{\zf}+m+2) \textrm{ at }\zf,\\
(A_{\hff}+B_{\hff}+3)\overline{\cup}(A_{\hrf}+B_{\hlf}+2)\textrm{ at }\hff,\\
(A_{\hnf}+B_{\hnf}+3)\overline{\cup}(A_{\rf}+B_{\lf}+2)\textrm{ at }\hnf,\\
(A_{\hff}+B_{\hrf}+3)\overline{\cup}(A_{\hrf}+B_{\zf}+2)\overline{\cup}(A_{\sf_0}+B_{\hrf}+m+2)\textrm{ at }\hrf,\\
(A_{\hlf}+B_{\hff}+3)\overline{\cup}(A_{\zf}+B_{\hlf}+2)\overline{\cup}(A_{\hlf}+B_{\sf_0}+m+2)\textrm{ at }\hlf,\\
A_{\sf_0}+B_{\sf_0}+m+2\textrm{ at }\sf_0,\\
A_{\sf}+B_{\sf}+m+2\textrm{ at }\sf,\\
\infty \textrm{ at } \tf, \td,\textrm{ and }\td_0.\\
\end{align*}
\end{theorem}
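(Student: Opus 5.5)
We prove Theorem \ref{thm:composition} by the standard pullback--pushforward argument, following \cite[Theorem 4.1]{NRS25} with the additional $\epsilon$-faces carried along. We write the Schwartz kernel of the composition as
\[K_{A\circ B}\,dt\,dz\,dz'' = (\pi_C)_*\bigl(\pi_L^*(K_A\,dt'\,dz')\,\pi_R^*(K_B\,dt''\,dz'')\bigr),\]
and convert both sides to full densities using the relations at the end of the Densities subsection. The integrand becomes $4T'T''\,\Pi_L^*K_A\,\Pi_R^*K_B\,\nu(M^3_\epsilon)$. The output is $2T K_{A\circ B}\,\nu(M^2_\epsilon)$, multiplied by the formal factor $d\epsilon$ on both sides.

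\textbf{Step 1 (pullback).} We lift everything to $M^3_{h,\epsilon}$, or to $M^3_{rh,\epsilon}$ where $\Pi_L,\Pi_R$ are already b-maps. By the pullback theorem, $\Pi_L^*K_A$ and $\Pi_R^*K_B$ are polyhomogeneous. Their index set at each triple-space face $F$ is the sum of the index sets of the double-space faces $G$ with $(F,G)$ appearing in the list of Lemma \ref{lem:combinatorics}. The hypothesis $B_{\td}=B_{\td_0}=\infty$ is used here: $\Pi_R$ is not a b-map onto $M^2_{h,\epsilon}$, but since $K_B$ vanishes to infinite order at td and td$_0$, it is polyhomogeneous on $M^2_{rh,\epsilon}$, and $\Pi_R^*$ into that space suffices. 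Similarly, infinite-order vanishing at tf for both kernels kills all faces mapping to tf ($F_{TL}$, $F_{TR}$, $F_{\mathcal O}$ and its offshoots mapping partly to tf). This is why those faces drop out and why $\infty$ appears at tf in the conclusion.

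\textbf{Step 2 (densities).} We multiply by $T'T''/T$ and by the lifted density factors from the Proposition on densities. We then rewrite $\beta^*\nu(M^3_\epsilon)$ as $\rho$-powers times $\nu(M^3_{rh,\epsilon})$, and likewise divide by $\beta^*\nu(M^2_\epsilon)$ on the target. This produces the shifts $+2$, $+3$, $+m+2$ in the statement. For example, at sf the shift comes from $\rho_{\mathcal O,bib,i}^{m+2}$ against the target's $\rho_{\sf}^{m+1}$, together with the time factors $T'T''/T$. At hff the shift comes from $\rho_{bib,b}^4$ versus $\rho_{\hff}^2$, plus the $\epsilon$ factor. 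The pair $\rho_{bbb,b}^5,\rho_{bbb,i}^4$ against $\rho_{\hff}^2,\rho_{\hnf}$ gives the $+3$ at hff and hnf. We will tabulate face by face. Each triple-space face $F$ contributes the term (pulled-back index set of $F$) $+$ (density exponent of $F$) $-$ (density exponent of its image face) to the index set at its image.

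\textbf{Step 3 (pushforward).} We apply Melrose's pushforward theorem to the b-fibration $\Pi_C$ from $\hat M^3_{1,\epsilon}$, composed with the blow-down from $M^3_{rh,\epsilon}$, onto $M^2_{rh,\epsilon}$. The image index set at each face is the extended union $\overline\cup$ over the triple faces mapping to it. Reading off the $\Pi_C$ list in Lemma \ref{lem:combinatorics} gives the three-term unions stated. For example, zf receives $F_{Z0}$, $F_{ibi,b}$ and $R_{diag,b}$, which yields $A_{\zf}+B_{\zf}$, $A_{\hlf}+B_{\hrf}+3$ and $A_{\td_0}+B_{\zf}+m+2$.

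The integrability condition of the pushforward theorem requires positivity of the index sets at the faces mapped to the interior, namely $F_{ibi}$ and $R_{diag}$. At $F_{ibi}$ the index set is $A_{\lf}+B_{\rf}$, plus shift, and this is exactly the hypothesis $A_{\lf}+B_{\rf}>-1$. At $R_{diag}$ we have $A_{\td}$ plus the $m$ from the density and the time factors, which gives $A_{\td}>-m-2$.

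Finally, to land in $M^2_{h,\epsilon}$ rather than $M^2_{rh,\epsilon}$, we note that the resulting kernel vanishes to infinite order at tf. Blowing up $P_2,P_4\subset$ tf therefore produces td and td$_0$ with index $\infty$, which is compatible with the claim.

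\textbf{Main obstacle.} The main difficulty is the bookkeeping in Step 2. We must verify that, at every face of $M^3_{rh,\epsilon}$, the combined powers of $\rho$ coming from $T'T''/T$, from the lifted $\epsilon$ factor, and from the two density Jacobians collapse to exactly the claimed shifts. In particular, the faces arising from blow-ups over $F_{\mathcal O}$ require expressing $\rho_{\mathcal O}$, $T'$, $T''$ and $T$ as products of the new boundary defining functions.

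We will check this carefully only for the faces with nontrivial contributions. For sf, sf$_0$, hff, hnf and the $F_{\mathcal O,bib,\cdot}$ faces we will compute in explicit projective coordinates, as in \cite{NRS25}. The remaining faces either map to tf, where infinite vanishing makes them irrelevant, or are mere products of the $\epsilon$-independent case already treated in \cite{NRS25}.
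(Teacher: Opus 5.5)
Your strategy is the paper's: pull back along $\Pi_L,\Pi_R$, multiply by $4T'T''$, and convert densities. Then push forward along $\Pi_C$, impose integrability at $\mathcal F_{ibi}$ and $R_{diag}$, and use tf-vanishing to pass from $M^2_{rh,\epsilon}$ to $M^2_{h,\epsilon}$. But the face-by-face bookkeeping, which you rightly single out as the crux, goes wrong in two places.

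First, the sf and sf$_0$ index sets do not come from $\mathcal F_{\mathcal O,bib,i}$ and $\mathcal F_{\mathcal O,bib,b}$. It is true that $\Pi_C$ maps these to sf and sf$_0$. However, under $\Pi_L$ and $\Pi_R$ they map partly into tf: there $z'$ stays in the interior while $z,z''$ sit at the boundary and $T',T''\to 0$. So the integrand vanishes to infinite order on them, as your own Step 1 says. Your count $(m+2)-(m+1)+(2-1)$ would in any case give $+2$, not $+m+2$. The contributing faces are $\mathcal F_{\mathcal O,bbb,i}$ and $\mathcal F_{\mathcal O,bbb,b}$, where all three points coalesce at the boundary on the time scale. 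There the integrand has index $A_{\sf}+B_{\sf}+2$ (resp.\ $A_{\sf_0}+B_{\sf_0}+2$) and the Jacobian exponent is $2m+2$ (resp.\ $2m+3$). Removing $2T$ and $\rho_{\sf}^{m+1}$ (resp.\ $\rho_{\sf_0}^{m+2}$) leaves exactly $+m+2$.

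Relatedly, $\Pi_L$ and $\Pi_R$ are b-maps only from $M^3_{h,\epsilon}$, not from $M^3_{rh,\epsilon}$, because they need $\mathcal O_{bbi,\cdot}$, $\mathcal O_{ibb,\cdot}$ and $\mathcal O_{diag,\cdot}$ blown up. The paper therefore pulls back to $M^3_{h,\epsilon}$ and notes infinite-order vanishing at the six corresponding faces. From this it deduces polyhomogeneity on $M^3_{rh,\epsilon}$. Only then does it push forward along $\Pi_C$, which is a b-fibration from $M^3_{rh,\epsilon}$ by the Hassell--Mazzeo--Melrose lemma. A b-fibration composed with an arbitrary blow-down is not a b-fibration in general.

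Second, the exponents you quote from the density proposition for the purely spatial/$\epsilon$ faces ($\rho_{bbb,b}^5$, $\rho_{bbb,i}^4$, $\rho_{bib,b}^4,\dots$) do not match the rule ``codimension minus one'' used for the double space (hff: $2$, hnf: $1$). $\mathcal F_{bbb,b}$ is the front face of the codimension-four corner $F_{bii}\cap F_{ibi}\cap F_{iib}\cap F_{Z0}$, so it carries $\rho^3$. The codimension-three faces carry $\rho^2$ and the codimension-two faces carry $\rho^1$. The same count reproduces the values $2m+3$, $2m+2$, $m+3$, $m+2$, $m+1$, $m$ for the $\mathcal O$-, $R$- and $L$-faces, which are correct. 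The paper's proof uses the inflated values too.

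With the correct exponents, your procedure gives $(A_{\hff}+B_{\hff}+1)\overline{\cup}(A_{\hrf}+B_{\hlf})$ at hff and $(A_{\hnf}+B_{\hnf}+1)\overline{\cup}(A_{\rf}+B_{\lf})$ at hnf. Every other $+3$ and $+2$ at rf, lf, zf, hrf and hlf becomes $+1$ and $0$. Also, there is no separate $\epsilon$ factor at hff, since $d\epsilon$ appears in both $\nu(M^3_\epsilon)$ and $\nu(M^2_\epsilon)$.

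A simple test confirms this. Take $K_A=K_B=\phi(t)$ with $\phi$ smooth, vanishing to infinite order at $t=0$, and constant in space. This satisfies every hypothesis, yet $K_{A\circ B}=|\Omega|\,(\phi*\phi)(t)$ is nonzero at hff, hnf, hrf and hlf, where the stated index sets start at $2$. So the verification you plan can only succeed for the statement with these shifts corrected. The $m$-dependent terms, the claims at tf, td and td$_0$, and the integrability conditions stand. The later Neumann-series argument only uses that leading order $0$ is preserved at zf, hff, hnf, hrf and hlf, and that still holds.
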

\begin{remark} Note that this is consistent with the composition theorem of \cite{NRS25}. The result there, with no vertices, matches our setting with $m=2$ and without any of the $\epsilon=0$ faces. There is the additional wrinkle of our hnf face but this has little effect.
\end{remark}

\begin{proof}
By the pullback theorem and Lemma \ref{lem:combinatorics}, the index sets of $\Pi_L^*(K_A)$ are
\begin{align*}
A_{\rf}\textrm{ at } \mathcal F_{bii}, \mathcal F_{bib,i},\\
A_{\lf}\textrm{ at } \mathcal F_{ibi},L_{ibb,i},\mathcal F_{ibb,i},\\
A_{\zf}\textrm{ at } \mathcal F_{Z0},\mathcal F_{iib,b},\\
A_{\hff}\textrm{ at }\mathcal F_{bbb,b}, \mathcal F_{bbi,b},\\
A_{\hnf}\textrm{ at }\mathcal F_{bbb,i}, \mathcal F_{bbi,i},\\
A_{\hrf}\textrm{ at }\mathcal F_{bib,b},\mathcal F_{bii,b},\\
A_{\hlf}\textrm{ at }\mathcal F_{ibb,b},\mathcal F_{ibi,b},L_{ibb,b}\\
A_{\sf_0}\textrm{ at }\mathcal F_{O,bbb,b},\mathcal F_{O,bbi,b},R_{bbi,b},\\
A_{\sf}\textrm{ at }\mathcal F_{O,bbb,i},\mathcal F_{O,bbi,i},R_{bbi,i}\\
A_{\td_0}\textrm{ at }\mathcal F_{\mathcal O,diag,b},R_{diag,b}\\
A_{\td}\textrm{ at }\mathcal F_{\mathcal O,diag},R_{diag},\\
0\textrm{ at } F_{TL}, F_{iib},\\
\infty \textrm{ at }\mathcal F_{TR}, \mathcal F_{\mathcal O}, \mathcal F_{\mathcal O,bib,b},\mathcal F_{\mathcal O,bib,i},\mathcal F_{\mathcal O,ibb,b},\mathcal F_{O,ibb,i}.
\end{align*}
Similarly the index sets of $\Pi_R^*(K_B)$ are
\begin{align*}
B_{\rf}\textrm{ at } \mathcal F_{ibi}, R_{bbi,i}, \mathcal F_{bbi,i},\\
B_{\lf}\textrm{ at } \mathcal F_{iib}, \mathcal F_{bib,i},\\
B_{\zf}\textrm{ at } \mathcal F_{Z0}, R_{diag,b}, \mathcal F_{bii,b},\\
B_{\hff}\textrm{ at } \mathcal F_{bbb,b},\mathcal F_{ibb,b},\\
B_{\hnf}\textrm{ at } \mathcal F_{bbb,i},\mathcal F_{ibb,i},\\
B_{\hrf}\textrm{ at } \mathcal F_{bbi,b}, \mathcal F_{ibi,b}, R_{bbi,b},\\
B_{\hlf}\textrm{ at } \mathcal F_{bib,b}, \mathcal F_{iib,b},\\
B_{\sf_0}\textrm{ at } \mathcal F_{\mathcal O,bbb,b}, \mathcal F_{O,ibb,b}, L_{ibb,b},\\
B_{\sf}\textrm{ at } \mathcal F_{O,bbb,i}, \mathcal F_{O,ibb,i}, L_{ibb,i}\\
0\textrm{ at } \mathcal F_{TR}, \mathcal F_{bii}, \mathcal R_{diag}\\
\infty\textrm{ at } \mathcal F_{TL}, \mathcal F_{\mathcal O}, \mathcal F_{O,bbi,b}, \mathcal F_{O,bbi,i}, \mathcal F_{O,bib,b}, \mathcal F_{O,bib,i}, \mathcal F_{O,diag,b}, \mathcal F_{O,diag}.
\end{align*}
Therefore the product $4T'T''(\Pi_L^*K_A)(\Pi_R^*K_B)$ is pc on $M^3_{h,\epsilon}$ with index sets
\begin{align*}
A_{\zf}+B_{\zf}\textrm{ at } \mathcal F_{Z0}, A_{\rf} \textrm{ at }\mathcal F_{bii}, A_{\lf}+B_{\rf} \textrm{ at }\mathcal F_{ibi}, B_{lf} \textrm{ at } \mathcal F_{iib},\\
A_{\hff}+B_{\hff}\textrm{ at }\mathcal F_{bbb,b}, A_{\hnf}+B_{\hnf}\textrm{ at }\mathcal F_{bbb,i},\\
A_{\hff} + B_{\hrf}\textrm{ at }\mathcal F_{bbi,b}, A_{\hrf} + B_{\hlf}\textrm{ at }\mathcal F_{bib,b}, A_{\hlf} + B_{\hff}\textrm{ at }\mathcal F_{ibb,b},\\
A_{\hrf} + B_{\zf} \textrm{ at }\mathcal F_{bii,b},  A_{\hlf} + B_{\hrf}\textrm{ at }\mathcal F_{ibi,b}, A_{\zf} + B_{\hlf}\textrm{ at }\mathcal F_{iib,b},\\
A_{\hnf} + B_{\rf} \textrm{ at }\mathcal F_{bbi,i},  A_{\rf} + B_{\lf}\textrm{ at }\mathcal F_{bib,i}, A_{\lf} + B_{\hnf}\textrm{ at }\mathcal F_{ibb,i},\\
A_{\sf_0} + B_{\hrf} + 1\textrm{ at }R_{bbi,b}, A_{\sf} + B_{\rf} + 1\textrm{ at } R_{bbi,i},\\ A_{\hlf} + B_{\sf_0} + 1\textrm{ at }L_{ibb,b}, A_{\lf}+B_{\sf}+1 \textrm{ at }L_{ibb,i},\\
A_{\td_0} + B_{\zf}+1 \textrm{ at }R_{diag_0}, A_{\td}+1\textrm{ at }R_{diag},\\
A_{\sf_0}+B_{\sf_0}+2\textrm{ at }\mathcal F_{O,bbb,b}, A_{\sf}+B_{\sf}+2\textrm{ at }\mathcal F_{O,bbb,i},\\
\infty\textrm{ at } \mathcal F_{TL}, \mathcal F_{TR}, \mathcal F_{\mathcal O}, \mathcal F_{O,bbi,b}, \mathcal F_{O,bbi,i}, \mathcal F_{O,bib,b}, \mathcal F_{O,bib,i}, \mathcal F_{O,ibb,b}, \mathcal F_{O,ibb,i}, \mathcal F_{O,diag,b}, \mathcal F_{O,diag}.
\end{align*}
Since it vanishes to infinite order at all of the new front faces created when $M_{rh,\epsilon}^3$ is blown up to create $M_{h,\epsilon}^3$, it is in fact pc on $M^3_{rh,\epsilon}$, with the same index sets. Converting to densities:
\begin{align*}
A_{\zf}+B_{\zf}\textrm{ at } \mathcal F_{Z0}, A_{rf} \textrm{ at }\mathcal F_{bii}, A_{\lf}+B_{\rf} \textrm{ at }\mathcal F_{ibi}, B_{\lf} \textrm{ at } \mathcal F_{iib},\\
A_{\hff}+B_{\hff}+5\textrm{ at }\mathcal F_{bbb,b}, A_{\hnf}+B_{\hnf}+4\textrm{ at }\mathcal F_{bbb,i},\\
A_{\hff} + B_{\hrf}+4\textrm{ at }\mathcal F_{bbi,b}, A_{\hrf} + B_{\hlf}+4\textrm{ at }\mathcal F_{bib,b}, A_{\hlf} + B_{\hff}+4\textrm{ at }\mathcal F_{ibb,b},\\
A_{\hrf} + B_{\zf}+3 \textrm{ at }\mathcal F_{bii,b},  A_{\hlf} + B_{\hrf}+3\textrm{ at }\mathcal F_{ibi,b}, A_{\zf} + B_{\hlf}+3\textrm{ at }\mathcal F_{iib,b},\\
A_{\hnf} + B_{\rf}+3 \textrm{ at }\mathcal F_{bbi,i},  A_{\rf} + B_{\lf}+3\textrm{ at }\mathcal F_{bib,i}, A_{\lf} + B_{\hnf}+3\textrm{ at }\mathcal F_{ibb,i},\\
A_{\sf_0} + B_{\hrf} + m+3\textrm{ at }R_{bbi,b}, A_{\sf} + B_{\rf} + m+2\textrm{ at } R_{bbi,i},\\ A_{\hlf} + B_{\sf_0} + m+3\textrm{ at }L_{ibb,b}, A_{\lf}+B_{\sf}+m+2 \textrm{ at }L_{ibb,i},\\
A_{\td_0} + B_{\zf}+m+2 \textrm{ at }R_{diag_0}, A_{\td}+m+1\textrm{ at }R_{diag},\\
A_{\sf_0}+B_{\sf_0}+2m+5\textrm{ at }\mathcal F_{O,bbb,b}, A_{\sf}+B_{\sf}+2m+4\textrm{ at }\mathcal F_{O,bbb,i},\\
\infty\textrm{ at } \mathcal F_{TL}, \mathcal F_{TR}, \mathcal F_{\mathcal O}, \mathcal F_{O,bib,b}, \mathcal F_{O,bib,i}.
\end{align*}
Using the pushforward theorem gives, as a section of $\nu(M^2_{rh,\epsilon})$,
\begin{align*}
A_{\rf}\overline{\cup}(A_{\hnf} + B_{\rf}+3)\overline{\cup}(A_{\sf}+B_{\rf}+m+2)\textrm{ at } \rf,\\
B_{\lf}\overline{\cup}(A_{\lf} + B_{\hnf}+3)\overline{\cup}(A_{\lf}+B_{\sf}+m+2)\textrm{ at } \lf,\\
\infty \textrm{ at } \tf,\\
(A_{\zf}+B_{\zf})\overline{\cup}(A_{\hlf}+B_{\hrf}+3)\overline{\cup}(A_{\td_0}+B_{\zf}+m+2) \textrm{ at } \zf,\\
(A_{\hff}+B_{\hff}+5)\overline{\cup}(A_{\hrf}+B_{\hlf}+4)\textrm{ at } \hff,\\
(A_{\hnf}+B_{\hnf}+4)\overline{\cup}(A_{\rf}+B_{\lf}+3)\textrm{ at } \hnf,\\
(A_{\hff}+B_{\hrf}+4)\overline{\cup}(A_{\hrf}+B_{\zf}+3)\overline{\cup}(A_{\sf_0}+B_{\hrf}+m+3)\textrm{ at } \hrf,\\
(A_{\hlf}+B_{\hff}+4)\overline{\cup}(A_{\zf}+B_{\hlf}+3)\overline{\cup}(A_{\hlf}+B_{\sf_0}+m+3)\textrm{ at } \hlf,\\
A_{\sf_0}+B_{\sf_0}+2m+5\textrm{ at } \sf_0,\\
A_{\sf}+B_{\sf}+2m+4\textrm{ at } \sf.
\end{align*}
The conditions on the pushforward theorem that guarantee the convergence of the integral are, precisely as we assume,
\[A_{\lf}+B_{\rf} >-1;\quad A_{\td}+m+1 > -1.\]

Now go back to our correct bundle and also divide by $2T$:
\begin{align*}
A_{\rf}\overline{\cup}(A_{\hnf} + B_{\rf}+3)\overline{\cup}(A_{\sf}+B_{\rf}+m+2)\textrm{ at } \rf,\\
B_{\lf}\overline{\cup}(A_{\lf} + B_{\hnf}+3)\overline{\cup}(A_{\lf}+B_{\sf}+m+2)\textrm{ at } \lf,\\
\infty \textrm{ at } \tf,\\
(A_{\zf}+B_{\zf})\overline{\cup}(A_{\hlf}+B_{\hrf}+3)\overline{\cup}(A_{\td_0}+B_{\zf}+m+2) \textrm{ at } \zf,\\
(A_{\hff}+B_{\hff}+3)\overline{\cup}(A_{\hrf}+B_{\hlf}+2)\textrm{ at } \hff,\\
(A_{\hnf}+B_{\hnf}+3)\overline{\cup}(A_{\rf}+B_{\lf}+2)\textrm{ at } \hnf,\\
(A_{\hff}+B_{\hrf}+3)\overline{\cup}(A_{\hrf}+B_{\zf}+2)\overline{\cup}(A_{\sf_0}+B_{\hrf}+m+2)\textrm{ at } \hrf,\\
(A_{\hlf}+B_{\hff}+3)\overline{\cup}(A_{\zf}+B_{\hlf}+2)\overline{\cup}(A_{\hlf}+B_{\sf_0}+m+2)\textrm{ at } \hlf,\\
A_{\sf_0}+B_{\sf_0}+m+2\textrm{ at } \sf_0,\\
A_{\sf}+B_{\sf}+m+2\textrm{ at } \sf.
\end{align*}
This is what we wanted to prove.
\end{proof}


\section{Model: the half-plane}

In this section we analyze the Robin heat kernel on the half-space $\{(x,y)\in\mathbb R^{m-1}\times\mathbb R_+\}$. This kernel will serve as the model for our parametrix for the heat kernel on a general manifold with boundary.

The Robin heat kernel for a half-plane can be written explicitly \cite{BoFu,NRS25}. It follows that
\begin{equation}\label{eq:explicithalfspace}
H_{corr}(x,y,x',y',T,\epsilon) = 
- \frac{e^{(y+y')/\epsilon+T^2/\epsilon^2}}{(4\pi T^2)^{(m-1)/2}\epsilon}\exp[-\frac{|x-x'|^2}{4T^2}]\erfc(\frac{y+y'}{2T}+\frac{T}{\epsilon}).
\end{equation}

The key result of this section is the following, which is the reason we have constructed the double heat space as we have.
\begin{proposition}\label{prop:everythingispc} The Robin correction term $H_{corr}$ for a half-space is pc on $M^2_{rh,\epsilon}$.
\end{proposition}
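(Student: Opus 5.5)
The function factors as a product of pieces, each of which I can check separately. Since $M^2_{rh,\epsilon}$ has product structure in $x-x'$ away from the boundary-diagonal blow-ups, I will write $H_{corr}=G\cdot F$ with
\[G=(4\pi T^2)^{-(m-1)/2}\exp\Big[-\frac{|x-x'|^2}{4T^2}\Big],\qquad F=-\frac1\epsilon\,\Phi\Big(\frac{y+y'}{2T}+\frac T\epsilon\Big)\exp\Big[-\frac{(y+y')^2}{4T^2}\Big],\]
where $\Phi(s)=e^{s^2}\erfc(s)$. This uses the identity $e^{(y+y')/\epsilon+T^2/\epsilon^2}=e^{s^2}e^{-(y+y')^2/4T^2}$ with $s=\frac{y+y'}{2T}+\frac T\epsilon$.

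The key analytic input is that $\Phi$ is smooth on $[0,\infty)$ and has a full asymptotic expansion $\Phi(s)\sim\pi^{-1/2}\sum_k c_k s^{-1-2k}$ as $s\to\infty$. Hence $\Phi(1/r)$ is smooth in $r\in[0,\infty)$, up to a factor of $r$.

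The factor $G$ is the $(m-1)$-dimensional Euclidean heat kernel in $x$. It is pc on the blow-up of $\{T=0,\ x=x'\}$, vanishing rapidly at tf away from that set. Every face we blow up over $T=0$ (namely sf$_0$, sf, and the td faces) lies over $\{x=x'\}$, so I would verify in projective coordinates $X=(x-x')/T$ near each such face, and in the original coordinates elsewhere, that $G$ is a power of the front-face defining function times a smooth function of $X$ which is Schwartz as $|X|\to\infty$.

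The substance is $F$, which depends only on $(y,y',T,\epsilon)\in[0,\infty)^4$. I will check polyhomogeneity of $F$ on the corresponding four-dimensional blown-up space: blow up $y=y'=\epsilon=0$, then $y=y'=0$, then $y=\epsilon=0$ and $y'=\epsilon=0$, and finally the $T=0$ faces $P_1$ and $P_3$. The diagonal blow-ups are irrelevant to $F$, since $F$ depends only on $y+y'$, and these lift smoothly.

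I will proceed face by face in projective coordinates. The argument $s$ is a sum of two ratios. Near sf$_0$, where $T$ is comparable to $y$, $y'$ and $\epsilon$, both $\frac{y+y'}{2T}$ and $\frac{T}{\epsilon}$ are smooth bounded coordinates, so $F=\epsilon^{-1}\cdot(\text{smooth})$. Near sf, where $T$ is comparable to $y$ and $y'$ and $\epsilon$ is bounded below, $s\approx\frac{y+y'}{2T}$. Near zf ($\epsilon=0$ with $T>0$), $s\to\infty$, and the expansion of $\Phi$ gives $\epsilon^{-1}\Phi\sim \frac{1}{\epsilon s}\cdot(\text{smooth in }1/s^2)$. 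Since $\epsilon s=\epsilon\frac{y+y'}{2T}+T$ is a nonvanishing smooth function there, $F$ is smooth at zf with integer index set. At hff, hrf, hlf and hnf the same reasoning applies. The point of blowing up $y=\epsilon=0$ and so on is exactly to make $y/\epsilon$ a coordinate, so that $s$, and hence $\epsilon s$, is a sum of monomials in boundary defining functions with a nonvanishing combination.

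Finally, at tf away from the blown-up sets, one of the following holds:
\begin{itemize}
\item $(y+y')/T\to\infty$, giving Gaussian decay $e^{-(y+y')^2/4T^2}$ with a merely polynomial factor from $1/(\epsilon s)$;
\item $|x-x'|/T\to\infty$, giving decay from $G$.
\end{itemize}
In either case the kernel vanishes to infinite order at tf.

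The main obstacle is the corner bookkeeping: at the codimension-two and codimension-three corners (for instance hff$\cap$zf$\cap$sf$_0$), I have to confirm that the lifted quantity $\epsilon s$ equals a product of defining functions times a smooth positive function. This amounts to checking that $\epsilon s$ has no zero set beyond the boundary faces. My first step there would be to write $\epsilon s=\tfrac12\epsilon\,\frac{y+y'}{T}+T$ in each of the finitely many projective charts and use that both terms are nonnegative and not simultaneously vanishing to higher order than a common monomial. That observation gives the pc structure with explicit index sets, which will be needed in later sections.
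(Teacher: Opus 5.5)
Your proposal has two genuine gaps: the factors $G$ and $F$ cannot be checked separately, and your corner criterion for $\epsilon s$ fails at corners inside $\tf$.

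\textbf{The factors are not individually pc.} Neither $G$ nor $F$ is pc on $M^2_{rh,\epsilon}$. The Gaussian $G$ is singular along all of $\{T=0,\ x=x'\}$. But $M^2_{rh,\epsilon}$ resolves this set only over $\hff$ and $\hnf$, via $\sf_0$ and $\sf$. Near a point with $T=0$, $x=x'$, $y,y'>0$ nothing has been blown up, and there $G$ is a multiple of $T^{-(m-1)}$ on $x=x'$ but $O(T^\infty)$ off it. The $\td$ faces you invoke belong to $M^2_{h,\epsilon}$, not $M^2_{rh,\epsilon}$, and in any case lie only over $y=y'$. In the other direction, $F$ restricted to $\hnf$ is exactly $-\epsilon^{-1}v(T/\epsilon)$, which has no limit at $\tf\cap\hff\cap\hnf$. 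Away from $x=x'$ that corner is not blown up in $M^2_{rh,\epsilon}$, since only the part $P_1$ of $P_5=\tf\cap\hff$ lying over the boundary diagonal is. In your four-dimensional space the analogues of $P_1,P_3$ are all of $\tf\cap\hff$ and $\tf\cap\hnf$. That space therefore lifts to $[\hat M^2_\epsilon;P_5;P_6]$, and you have no step bringing the result back to $M^2_{rh,\epsilon}$.

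What is missing is a mechanism by which each factor's bad set is covered by the \emph{other} factor's infinite-order decay. The paper gets this as follows.
\begin{enumerate}
\item It moves half of the $y$-Gaussian, $e^{-(y+y')^2/8T^2}$, into the $x$-factor, so that $H_{corr,1}$ is pc on the $\epsilon$-free reduced space $M^2_{rh}$.
\item It proves the $v$-factor $H_{corr,2}$ is pc on $[\hat M^2_\epsilon;P_5]$ in the patches away from $\hnf$.
\item Using $P_1\subset P_5$, it multiplies on $[\hat M^2_\epsilon;P_1;P_5]=[\hat M^2_\epsilon;P_5;P_1]$. It then blows $P_5$ back down, because $H_{corr,1}$ vanishes to infinite order at its lift.
\item Near $\hnf$ it treats $H_{corr}$ as a whole.
\end{enumerate}

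\textbf{The corner criterion is wrong.} That $\epsilon s$ has no zeros off the boundary faces does not make it a monomial times a smooth positive function (consider $\rho_1+\rho_2$), and here it genuinely fails.
\begin{itemize}
\item At an interior point of $\tf\cap\zf$ with $y=y'=1$ one has $1/s=T\epsilon/(\epsilon+T^2)$.
\item Over $\sf_0$, in the chart where $y$ dominates, with $\tau=T/y$ and $\rho=\epsilon/y$, one gets $1/s=\rho\tau/(\rho+\tau^2)$.
\item Near $\hlf$ the paper meets $w_1w_2w_3/(w_1w_2+w_3^2)$.
\end{itemize}
None of these is resolved by the faces of $M^2_{rh,\epsilon}$. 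They are harmless only because they sit inside $\tf$, where $e^{-(y+y')^2/4T^2}$ decays to infinite order. Turning that into polyhomogeneity requires control of all derivatives of $v(s)$ uniformly up to these corners. The size bound $\epsilon^{-1}v(s)\lesssim T^{-1}$ in your $\tf$ paragraph is not enough, and you moreover state that paragraph only away from the blown-up sets. The paper supplies this control with extra blow-ups, all contained in $\tf$: $P_7$, the three blow-ups in the $w_i$, and those in sub-patches ii) and iii). After these $v(s)$ is pc, and the paper then blows them down using the infinite-order vanishing of the exponential factor there.
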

Additionally, the Neumann heat kernel $H_N$ on a manifold with smooth boundary is pc on $M^2_h$. Though the proof of this fact in \cite[Section 4.2]{NRS25} is only in dimension 2, it goes through essentially verbatim for higher dimensions; see also \cite{grieserheat}.
By Lemma \ref{lem:addtheeps}, $H_N$ is pc on $M^2_{h,\epsilon}$, and thus we obtain:
\begin{corollary} The Robin heat kernel for a half-space is pc on $M^2_{h,\epsilon}$.
\end{corollary}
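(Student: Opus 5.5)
The corollary is a direct combination of Proposition \ref{prop:everythingispc} with the Neumann result. By definition, the Robin half-space kernel splits as
\[H(z,z',T,\epsilon)=H_N(z,z',T)+H_{corr}(z,z',T,\epsilon).\]
Since the polyhomogeneous conormal functions on a fixed manifold with corners form a vector space (index sets add by extended union), it suffices to show that each summand is pc on the same space, $M^2_{h,\epsilon}$.

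For the Neumann part, I will use the fact, recalled just above, that $H_N$ is pc on $M^2_h$. The half-space is the simplest instance: its Neumann kernel is the explicit method-of-images kernel, and pc-ness there can also be checked by hand. Lemma \ref{lem:addtheeps} supplies a b-map $\pi:M^2_{h,\epsilon}\to M^2_h$ lifting the projection that forgets $\epsilon$. By Melrose's pullback theorem, $\pi^*H_N$ is then pc on $M^2_{h,\epsilon}$. Its index set at each face $F$ of $M^2_{h,\epsilon}$ is determined by the exponent matrix: $\sum_G e(F,G)\,(H_N)_G$ over faces $G$ of $M^2_h$, with index set $0$ at faces mapped to the interior. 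In particular, $\pi^*H_N$ is smooth up to zf, since it is independent of $\epsilon$.

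For the correction part, Proposition \ref{prop:everythingispc} says $H_{corr}$ is pc on $M^2_{rh,\epsilon}$. By definition, $M^2_{h,\epsilon}=[[M^2_{rh,\epsilon};P_2];P_4]$ is obtained by further blow-ups of p-submanifolds. The blow-down map $\beta:M^2_{h,\epsilon}\to M^2_{rh,\epsilon}$ is a b-map, so pulling back under it preserves polyhomogeneity. At the new faces $\td_0$ and $\td$, the index set of $\beta^*H_{corr}$ is the sum of the index sets of the faces containing $P_2$ (respectively $P_4$) in $M^2_{rh,\epsilon}$.

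Adding the two pullbacks gives pc-ness of $H$ on $M^2_{h,\epsilon}$. I expect no real obstacle here. The only point requiring care is to confirm that the two lifts of the diagonal-at-tf blow-ups are compatible, i.e.\ that the $P_2,P_4$ of Lemma \ref{lem:addtheeps} match the faces used in the definition of $M^2_{h,\epsilon}$. This is exactly what the proof of that lemma checks. All the substantive analysis sits inside Proposition \ref{prop:everythingispc}, whose proof is deferred to the Appendix.
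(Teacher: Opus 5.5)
Your proposal is correct and is essentially the paper's argument: write $H=H_N+H_{corr}$, pull $H_N$ back from $M^2_h$ via the b-map of Lemma \ref{lem:addtheeps}, and pull $H_{corr}$ back from $M^2_{rh,\epsilon}$ through the blow-down of $P_2$ and $P_4$. Since $H_{corr}$ vanishes to infinite order at $\tf$, it also vanishes to infinite order at $\td_0$ and $\td$, and the paper uses this same pullback step explicitly in the parametrix construction. One small correction: a sum of pc functions with index sets $E$ and $F$ has index set the ordinary union $E\cup F$, not the extended union $E\,\overline{\cup}\,F$ (that arises from pushforward), but since $E\cup F\subseteq E\,\overline{\cup}\,F$ nothing in your argument breaks.
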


The proof of Proposition \ref{prop:everythingispc} is a careful exercise in asymptotic analysis in local coordinates and is deferred to the appendix. A key ingredient, both in that proof and later on, is to define an auxiliary function $v(\cdot)$, with argument in $[0,\infty)$, by
\begin{equation}\label{eq:defofv}
v(x):=\erfc(x)\exp(x^2).
\end{equation}
Then the correction term may be rewritten as
\begin{equation}\label{eq:halfspacewithv}
H_{corr}(t,x,y,x',y',\epsilon) = -\frac{1}{(4\pi T^2)^{(m-1)/2}\epsilon}\exp[-\frac{|x-x'|^2}{4T^2}-\frac{(y+y')^2}{4T^2}]v(\frac{y+y'}{2T}+\frac{T}{\epsilon}).
\end{equation}

The point of this rewrite is that $v(x)$ is a nice function, in that its leading order behavior at $x=0$ and $x=\infty$ is smooth and polynomial rather than exponential. The following is a classical observation \cite[(7.12.1)]{dlmf}:
\begin{lemma} The function $v(x)$ is smooth down to $x=0$ with $v(0)=1$. Moreover, at $x=\infty$, it is a smooth function of $x^{-1}$ with leading order $\pi^{-1/2}x^{-1}$.
\end{lemma}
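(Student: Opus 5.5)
The plan is to work directly from the definition $v(x)=\erfc(x)e^{x^2}$ and reduce both claims to standard facts about $\erfc$.

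\emph{Behavior near $x=0$.} Since $\erfc(x)=1-\frac{2}{\sqrt\pi}\int_0^x e^{-s^2}\,ds$, the function $\erfc$ is entire. So $v$ is a product of entire functions, hence smooth (indeed real-analytic) on $[0,\infty)$ up to and including $x=0$. Moreover $v(0)=\erfc(0)\cdot 1=1$. This part is immediate.

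\emph{Behavior at $x=\infty$.} I would set $s=1/x$ and show that $s\mapsto v(1/s)$ extends smoothly to $s=0$ with leading term $\pi^{-1/2}s$. I would use the integral representation
\[v(x)=\frac{2}{\sqrt\pi}\int_0^\infty e^{-u^2-2xu}\,du,\]
which follows from substituting $r=x+u$ in $\erfc(x)=\frac{2}{\sqrt\pi}\int_x^\infty e^{-r^2}\,dr$. Now rescale $u=w/(2x)$:
\[v(x)=\frac{1}{\sqrt\pi\,x}\int_0^\infty e^{-w}e^{-w^2/(4x^2)}\,dw=\frac{s}{\sqrt\pi}\,F(s^2),\qquad F(\sigma)=\int_0^\infty e^{-w}e^{-\sigma w^2/4}\,dw.\]

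It then remains to show that $F$ is smooth on $[0,\infty)$ with $F(0)=1$. For every $k$, differentiating under the integral gives $F^{(k)}(\sigma)=\int_0^\infty e^{-w}(-w^2/4)^k e^{-\sigma w^2/4}\,dw$. This is justified by dominated convergence with dominating function $e^{-w}w^{2k}$, which is uniform for $\sigma\ge 0$. Hence $F\in C^\infty([0,\infty))$ and $F(0)=\int_0^\infty e^{-w}\,dw=1$. It follows that $v(1/s)=\pi^{-1/2}s\,F(s^2)$ is smooth in $s$ down to $s=0$, with leading order $\pi^{-1/2}s=\pi^{-1/2}x^{-1}$.

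As a consistency check, one would note that the Taylor coefficients $F^{(k)}(0)/k!=(-1)^k(2k)!/(4^k k!)$ reproduce the classical asymptotic series $\erfc(x)e^{x^2}\sim\frac{1}{\sqrt\pi x}\sum_k(-1)^k\frac{(2k-1)!!}{(2x^2)^k}$ from \cite[(7.12.1)]{dlmf}. That series diverges, so the content of the claim is one-sided smoothness at $s=0$ (a $C^\infty$ function with this Taylor expansion), not analyticity. This is exactly what is needed for polyhomogeneity.

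The only mildly delicate step is justifying differentiation under the integral uniformly as $\sigma\to0^+$. The uniform dominating function above handles it, so I expect no genuine obstacle.
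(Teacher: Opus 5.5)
Your proof is correct. It is also more complete than what the paper offers: the paper gives no argument beyond citing the classical asymptotic expansion DLMF~(7.12.1), $v(x)\sim \pi^{-1/2}x^{-1}\sum_k(-1)^k(\tfrac12)_k x^{-2k}$. The citation is quicker and supplies every coefficient at once. However, a Poincar\'e expansion on its own controls only $v$ and not its derivatives, so it does not by itself show that $s\mapsto v(1/s)$ extends to a $C^\infty$ function on $[0,\delta)$. One also needs the differentiated expansions; these hold here, e.g.\ via the ODE $v'=2xv-2/\sqrt\pi$, but that is an extra step. Your representation $v(1/s)=\pi^{-1/2}sF(s^2)$, with $F(\sigma)=\int_0^\infty e^{-w-\sigma w^2/4}\,dw$ and differentiation under the integral justified by the $\sigma$-uniform majorant $e^{-w}w^{2k}$, does three things. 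It proves exactly the one-sided smoothness that the pullback and b-map arguments of the appendix actually use. It shows that only odd powers of $x^{-1}$ occur. And it recovers the DLMF coefficients as a byproduct. Your closing remark is also correct and worth keeping. The series has zero radius of convergence, and alternatively $v$ is entire and not a polynomial, so it has an essential singularity at infinity. Either way $v(1/s)$ is not real-analytic at $s=0$, so the paper's aside that both statements ``can be upgraded to analyticity'' holds only for the statement at $x=0$.
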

In fact both statements can be upgraded to analyticity, but this is not needed for our purposes.

\subsection{Model operators and the heat kernel}
We examine the structure of $H_{corr}$ on $M^2_{rh,\epsilon}$, with an emphasis on its leading order behavior at sf and sf$_0$. In the interior of sf, away from all other boundary hypersurfaces, we use the following set of coordinates, see \cite[(4.4)]{NRS25}:
\begin{equation}\label{eq:coordsintsf}
X:=\frac{x-x'}{T};\ \xi:=\frac{y}{T};\ \xi':=\frac{y'}{T};\ x';\ \epsilon;\ \textrm{ and }T.
\end{equation}
In these coordinates, \eqref{eq:explicithalfspace} becomes
\begin{equation}\label{eq:hcorrinsf}
H_{corr}=
- \frac{e^{T(\xi+\xi')/\epsilon+T^2/\epsilon^2}}{(4\pi T^2)^{(m-1)/2}\epsilon}\exp[-\frac 14X^2]\erfc(\frac{\xi}{2}+\frac{\xi'}{2}+\frac{T}{\epsilon}).
\end{equation}
Since $T$ is our boundary defining function for sf, we see that $H_{corr}$ has leading order $-m+1$ at sf. Its leading order term there may be found by multiplying by $T^{m-1}$ and then setting $T=0$. This gives a model:
\begin{equation}\label{eq:hcorrmodelsf}
\mathcal H_{corr,-m+1,\sf}:=-\frac{1}{(4\pi)^{m-1}\epsilon}\exp[-\frac 14X^2]\erfc(\frac{\xi}{2}+\frac{\xi'}{2}).
\end{equation}

Now in the interior of sf$_0$, we use a similar set of coordinates:
\begin{equation}\label{eq:coordsintsf0}
X:=\frac{x-x'}{T};\ \xi:=\frac{y}{T};\ \xi':=\frac{y'}{T};\ x';\ \tilde\epsilon:=\frac{\epsilon}{T};\ \textrm{ and }T.
\end{equation}
These coordinates are not valid near the intersection sf$_0\cap$sf, as there $\tilde\epsilon$ is infinity. They are, however, valid in the interior of the intersections sf$_0\cap$zf and sf$_0\cap$rf\footnote{Note these are not valid at the triple intersection sf$_0\cap$zf$\cap$rf, because it no longer exists; zf$\cap$rf was blown up to create hrf.}, where $\tilde\epsilon$ is now a bdf for zf and $\xi$ is a bdf for rf.
In these coordinates,
\begin{equation}\label{eq:hcorrinsf0}
H_{corr}=
- \frac{e^{(\xi+\xi')/\tilde\epsilon+1/\tilde\epsilon^2}}{(4\pi T^2)^{(m-1)/2}\tilde\epsilon}\exp[-\frac 14X^2]\erfc(\frac{\xi}{2}+\frac{\xi'}{2}+\frac{1}{\tilde\epsilon}).
\end{equation}
We see that $H_{corr}$ has leading order $-m$ at sf$_0$, and its leading order term there is
\begin{equation}\label{eq:hcorrmodelsf0}
\mathcal H_{corr,-m,\sf_0}:=- \frac{e^{(\xi+\xi')/\tilde\epsilon+1/\tilde\epsilon^2}}{(4\pi)^{m-1}\tilde\epsilon}\exp[-\frac 14X^2]\erfc(\frac{\xi}{2}+\frac{\xi'}{2}+\frac{1}{\tilde\epsilon}).
\end{equation}

It can also be helpful to rewrite the equation using \eqref{eq:defofv}. Doing so yields
\begin{equation}\label{eq:hcorrmodelsf0withv}
H_{corr}=- \frac{1}{(4\pi T^2)^{(m-1)/2}\tilde\epsilon}\exp[-\frac 14X^2-\frac 14(\xi+\xi')^2]v(\frac{\xi}{2}+\frac{\xi'}{2}+\frac{1}{\tilde\epsilon}).
\end{equation}
The advantage of doing this is that it is now evident that $H_{corr}$ has leading order 0 at zf; the prefactor of $\tilde\epsilon^{-1}$ is cancelled by the $1/(\tilde\epsilon^{-1})$ coming from the $x\to\infty$ asymptotics of $v$. In fact, in these coordinates,
\[H_{corr}|_{\zf} = -\frac{1}{(4\pi T^2)^{(m-1)/2}\sqrt{\pi}}\exp[-\frac 14X^2-\frac 14(\xi+\xi')^2].\]
As is unsurprising, this is exactly the difference between the Neumann and Dirichlet heat kernels on the half-space.

We know a priori that $H_{corr}$ is a solution of the heat equation; let us examine this statement in the coordinates \eqref{eq:coordsintsf} and \eqref{eq:coordsintsf0}. We consider the heat operator
\[\mathcal L:=\partial_t+\Delta_z.\]
As usual with heat spaces, it is best to consider the operator $T^2\mathcal L$, which lifts to a b-operator away from the right face rf. The computation near sf proceeds precisely as in \cite[(4.12)]{NRS25} and we see that in the coordinates \eqref{eq:coordsintsf}
\begin{equation}\label{eq:heatopinsf}
T^2\mathcal L=\frac 12T\partial_T - \partial_{XX}-\frac 12X\partial_X-\partial_{\xi\xi}-\frac 12\xi\partial_{\xi}-\frac 12\xi'\partial_{\xi'}.
\end{equation}
Of course, applying \eqref{eq:heatopinsf} to \eqref{eq:hcorrinsf} necessarily yields zero. This can be confirmed by direct calculation. We also consider the coordinates \eqref{eq:coordsintsf0} in which we get
\begin{equation}\label{eq:heatopinsf0}
T^2\mathcal L=\frac 12T\partial_T - \partial_{XX}-\frac 12X\partial_X-\partial_{\xi\xi}-\frac 12\xi\partial_{\xi}-\frac 12\xi'\partial_{\xi'} - \frac 12\tilde\epsilon\partial_{\tilde\epsilon}.
\end{equation}
Applying \eqref{eq:heatopinsf0} to \eqref{eq:hcorrinsf0} also yields zero.

Finally, let us think about the Robin boundary condition for $H_{N}+H_{corr}$. The differential operator $\partial_y$ lifts to $T\partial_{\xi}$ in the coordinates \eqref{eq:coordsintsf0}. Computing directly with \eqref{eq:hcorrinsf0},
\[T\partial_{\xi}H_{corr}|_{\xi=0}=-\frac{e^{\xi'/\tilde\epsilon+1/\tilde\epsilon^2}}{(4\pi T^2)^{(m-1)/2}\tilde\epsilon}\exp[-\frac 14X^2]
\left(\frac{1}{\tilde\epsilon}\erfc(\frac{\xi'}{2}+\frac{1}{\tilde\epsilon}) -\frac{1}{\sqrt\pi}\exp(-(\frac{\xi'}{2}+\frac{1}{\tilde\epsilon})^2)
\right)\]
\[=\frac{1}{\tilde\epsilon}H_{corr}|_{\xi=0} + \frac{1}{(4\pi T^2)^{(m-1)/2}\sqrt\pi\tilde\epsilon}\exp[-\frac 14X^2-\frac 14(\xi')^2].\]
\[=\frac{1}{\tilde\epsilon}H_{corr}|_{\xi=0} + \frac{1}{\tilde\epsilon}H_{N}|_{\xi=0}.\]
So indeed, as expected, 
\[(T\partial_{\xi}-\tilde\epsilon^{-1})(H_N+H_{corr})|_{\xi=0} = 0.\]

\section{The parametrix construction}

We now construct our parametrix for the heat kernel, then improve it via a Neumann series argument to the true heat kernel. By using our composition formula, we will thereby demonstrate that the true heat kernel is pc on $M^2_{h,\epsilon}$. We will also be able to read off its leading order behavior at sf$_0$.

We need to define the function space(s) where our parametrices and errors will live. Each will live on the double space $M^2_{h,\epsilon}$. This space has twelve boundary hypersurfaces. To simplify matters, we assume that all of our kernels vanish to infinite order at tf and have leading order $0$ at -- that is, are continuous at -- each of the boundary hypersurfaces zf, hff, hnf, hrf, and hlf. 
\begin{definition} Suppose $\mathcal F=(F_{\td},F_{\td_0},F_{\sf},F_{\sf_0},F_{\rf}, F_{\lf})$ is an index family. Then $\mathcal A^{\mathcal F}_{h,\epsilon}$ is the space of kernels in $\mathcal A(M_h^2)$ with those index sets at $\td$, $\td_0$, $\sf$, $\sf_0$, $\rf$, and $\lf$, infinite order decay at $\tf$, and leading order 0 at $\zf$, $\hff$, $\hnf$, $\hrf$, and $\hlf$.
\end{definition}

\subsection{The initial guess}
The following is the first step in our parametrix construction.
\begin{proposition} There  exists an element $H^{(1)}$ of the form \eqref{eq:formofh1} with
\[H^{(1)}(z,z',T,\epsilon)\in \mathcal A^{-m,-m,-m,-m,0,0}_{h,\epsilon}\]
for which
\[T^2\mathcal L(H^{(1)})\in \mathcal A^{\infty,\infty,-m+2,-m+1,0,0}_{h,\epsilon}.\]
In other words, we can simultaneously solve the model problems to infinite order at $\td$ and $\td_0$, to two orders at $\sf$, and to one order at $\sf_0$. Moreover, we may choose $H^{(1)}$ to have a smooth expansion at $\rf$.
\end{proposition}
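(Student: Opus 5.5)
The initial guess will be $H^{(1)} = H_N + \chi\, \tilde H_{corr}$. Here $H_N$ is the true Neumann heat kernel of $\Omega$, which by \cite[Section 4.2]{NRS25} and Lemma \ref{lem:addtheeps} is pc on $M^2_{h,\epsilon}$ and satisfies $\mathcal L H_N=0$ exactly. The correction $\tilde H_{corr}$ is the half-space Robin correction \eqref{eq:explicithalfspace}, transplanted to $\Omega$ using boundary normal coordinates $(x,y),(x',y')$. The cutoff $\chi$ is supported where $y,y'$ are small and $|x-x'|$ is small, and equals $1$ near $\sf\cup\sf_0\cup\hff\cup\hnf$. This is presumably the form \eqref{eq:formofh1}. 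The transplanted kernel is pc on $M^2_{rh,\epsilon}$ by Proposition \ref{prop:everythingispc}, and the coordinate change is a diffeomorphism of the blown-up spaces near the relevant faces, so it is pc on $M^2_{h,\epsilon}$. Since $H_N$ already solves the equation, the behaviour at $\td,\td_0$ is automatic: near the interior diagonal $\tilde H_{corr}$ vanishes to infinite order because of the factor $\exp(-(y+y')^2/4T^2)$, so $\td,\td_0$ index sets $\infty$ for the error follow.

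Next I will read off the leading orders. From \eqref{eq:hcorrinsf} and \eqref{eq:hcorrinsf0}, $\tilde H_{corr}$ has order $-m+1$ at $\sf$ and $-m$ at $\sf_0$. It has order $0$ at $\zf$ (by the $v$-form \eqref{eq:hcorrmodelsf0withv}), at $\hff,\hnf,\hrf,\hlf$, and at $\rf,\lf$. The index sets at $\rf,\lf$ are smooth, since $y$ enters only through the smooth function $v$ of $\xi+\xi'+1/\tilde\epsilon$. Combined with $H_N$, which has order $-m$ at $\td,\sf$ (and hence at $\td_0,\sf_0$ after lifting), this puts $H^{(1)}$ in $\mathcal A^{-m,-m,-m,-m,0,0}_{h,\epsilon}$, with a smooth expansion at $\rf$.

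The main step is computing the error $T^2\mathcal L(\chi\tilde H_{corr})$. Terms where derivatives hit $\chi$ are supported away from $\sf,\sf_0$ and the diagonal, and the Gaussian factors there give infinite-order vanishing at $\tf$, $\sf$ and $\sf_0$. For the main term, write $T^2\mathcal L = T^2\mathcal L_0 + T^2(\mathcal L-\mathcal L_0)$, where $\mathcal L_0$ is the flat half-space heat operator, which annihilates $\tilde H_{corr}$ exactly. The difference comes from the expression for $\Delta$ in Section 2. It consists of $\Delta_{\partial\Omega}$ minus the flat $\partial_{xx}$, which is $O(|x-x'|^2)\partial_{xx}+O(|x-x'|)\partial_x$ in normal coordinates on the boundary, together with $y b_1\partial_{xx}$, $y b_2\partial_x$ and $b_3\partial_y$. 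In the coordinates \eqref{eq:coordsintsf} each of these gains at least one power of $T$ relative to the b-operator \eqref{eq:heatopinsf}. For example, $T^2 b_3\partial_y = T b_3\partial_\xi$, and $T^2\cdot y b_1\partial_{xx} = T\xi b_1\partial_{XX}$. Thus the error has order $-m+2$ at $\sf$ (one better than $-m+1$) and $-m+1$ at $\sf_0$ (one better than $-m$), as claimed. The vector field $\partial_\xi$ acting on $v(\xi/2+\xi'/2+1/\tilde\epsilon)$ is harmless uniformly in $\tilde\epsilon$, so the orders at $\zf,\hff,\hnf,\hrf,\hlf$ remain $0$.

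I expect the main obstacle to be exactly this uniformity of the gains near the corners, especially $\sf_0\cap\zf$, $\sf\cap\sf_0$, and the faces $\hrf,\hlf$. There one must check in projective coordinates (for instance $\epsilon/y$ and $T/y$ near $\hrf$) that the lifted error terms remain bounded and do not lose powers. I would handle this by writing each error term as a smooth b-vector field, multiplied by defining functions, applied to $\tilde H_{corr}$, and then invoking Proposition \ref{prop:everythingispc} together with the fact that b-vector fields preserve polyhomogeneity and index sets.
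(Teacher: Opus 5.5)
Your proposal is correct and is essentially the paper's argument. The paper also takes $H_N$ plus a correction carrying the model half-space leading terms at $\sf$ and $\sf_0$; your cut-off transplant of \eqref{eq:explicithalfspace} is just a concrete realization of its abstractly chosen $H^{(1)}_{corr}$ on $M^2_{rh,\epsilon}$, and the extra order comes, as in the paper, from $T^2(\mathcal L-\mathcal L_{\mathrm{halfspace}})$ being $T$ times a b-operator at $\sf$ and $\sf_0$.

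For the corner check you flag, note that the lift of $\partial_y$ is not a b-vector field at $\rf$, $\hrf$, $\hnf$ or $\hff$, so a pure b-vector-field argument would lose orders there. Use instead, as the paper does at $\rf$, that $\tilde H_{corr}$ is smooth in $(y,y',\epsilon)$ for $T>0$ and in $(\xi,\xi',\tilde\epsilon)$ near $\sf_0$, so no orders are lost.
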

\begin{proof}
Our initial parametrix will be built in the form
\begin{equation}\label{eq:formofh1}
H^{(1)}(z,z',T,\epsilon) = H_N(z,z',T,\epsilon) + H_{corr}^{(1)}(z,z',T,\epsilon).
\end{equation}
First, observe that $H_N(z,z',T)\in\mathcal A^{-m,-m,-m,-m,0,0}_{h,\epsilon}$. Indeed, from \cite{NRS25}, we know that $H_N$ is pc on $M^2_{h}$. The observation now follows immediately from the pullback theorem and Lemma \ref{lem:addtheeps}. Moreover, since $H_N$ is in the kernel of $\mathcal L$, we actually have
\[T^2\mathcal L(H^{(1)}) = T^2\mathcal L(H^{(1)}_{corr}).\]
We claim that we can find an $H^{(1)}_{corr}$ with
\[H^{(1)}_{corr}\in\mathcal A^{\infty,\infty,-m+1,-m,0,0}\]
for which
\begin{equation}\label{eq:errcondh1corr}
T^2\mathcal L(H^{(1)}_{corr})\in\mathcal A^{\infty,\infty,-m+2,-m+1,0,0}.
\end{equation}
We do so by building this kernel on the \emph{reduced} heat space $M^2_{rh,\epsilon}$; it is then pc on $M^2_{h,\epsilon}$ by the pullback theorem. Assuming it vanishes to infinite order at tf in $M^2_{rh,\epsilon}$, and has leading orders $-m+1$ at sf and $-m$ at sf$_0$, and satisfies \eqref{eq:errcondh1corr}, this will be sufficient. In particular the pullback vanishes to infinite order at td and td$_0$.

To do this, we specify the leading orders at sf and sf$_0$ to be precisely the same as in the model case. Namely, let $H^{(1)}_{corr}$ have leading order behavior:
\begin{itemize}
\item At sf, leading order $-m+1$ with coefficient \eqref{eq:hcorrinsf}.
\item At sf$_0$, leading order $-m$ with coefficient \eqref{eq:hcorrinsf0}.
\item At tf, order $\infty$.
\item At all other boundary hypersurfaces of $M^2_{rh,\epsilon}$, leading order zero.
\end{itemize}
The crucial point is that we can do all of these at once because the model half-space heat kernel has all of these properties by Proposition \ref{prop:everythingispc}. Therefore the requirements are consistent and such an $H^{(1)}$ exists.

Of course, the heat operator $T^2\mathcal L$ is no longer precisely identical to the half-space heat operator. But it is the same to first order at sf and at sf$_0$. Specifically, at sf$_0$, a Chain Rule calculation gives
\begin{multline}\label{eq:heatopsf0notexact}
T^2\mathcal L=\frac 12T\partial_T - \partial_{XX}-\frac 12X\partial_X-\partial_{\xi\xi}-\frac 12\xi\partial_{\xi}-\frac 12\xi'\partial_{\xi'} - \frac 12\tilde\epsilon\partial_{\tilde\epsilon}\\ + T\xi b_1(TX+x',T\xi)\partial_{XX}+T^2\xi b_2(TX+x',T\xi)\partial_X+Tb_3(TX+x',T\xi)\partial_{\xi}.
\end{multline}
A similar calculation may be done at sf; this is identical to \cite{NRS25}. The point is that at sf$_0$ and sf, the difference between $T^2\mathcal L$ and $T^2\mathcal L_{halfspace}$ is not just a b-operator but actually $T$ times a b-operator.

We now apply this operator to $H^{(1)}_{corr}$. It is a b-operator except at rf, where $H^{(1)}_{corr}$ has a smooth expansion and therefore so does $T^2\mathcal L H^{(1)}_{corr}$. Thus $T^2\mathcal L H^{(1)}_{corr}\in\mathcal A(M^2_{rh,\epsilon})$. Its leading orders are $\infty$ at td and td$_0$. At sf, the term at order $-m+1$ is in the kernel of $T^2\mathcal L_{halfspace}$, and the difference between $T^2\mathcal L$ and $T^2\mathcal L_{halfspace}$ is $T$ times a b-operator. Thus, applying $T^2\mathcal L$ to this term yields something of order $-m+2$. Similarly, at sf$_0$, the result has order $-m+1$. This is exactly what we wanted to prove. \end{proof}

\subsection{Satisfying Robin boundary conditions}
The parametrix $H^{(1)}$ does not necessarily satisfy Robin boundary conditions. However, as we will see, it is close enough to doing so that we may correct away the error. This is morally equivalent to what is done in \cite{NRS25}, though we use slightly different language.

\begin{proposition} There  exists an element $H^{(2)}$ of the form \eqref{eq:formofh1}, satisfying Robin boundary conditions with parameter $\epsilon^{-1}$ in the left variable, with
\[H^{(2)}(z,z',T,\epsilon)\in \mathcal A^{-m,-m,-m,-m,0,0}_{h,\epsilon},\]
for which
\[T^2\mathcal L(H^{(2)})\in \mathcal A^{\infty,\infty,-m+2,-m+1,0,0}_{h,\epsilon}.\]
\end{proposition}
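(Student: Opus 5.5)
We start from $H^{(1)}=H_N+H^{(1)}_{corr}$ and modify only the correction term, so that $H^{(2)}=H_N+H^{(2)}_{corr}$ with $H^{(2)}_{corr}=H^{(1)}_{corr}+G$. Since $H_N$ satisfies Neumann conditions in the left variable, the Robin condition for $H^{(2)}$ is
\[
(\partial_y-\epsilon^{-1})H^{(2)}_{corr}\big|_{y=0}=\epsilon^{-1}H_N\big|_{y=0}.
\]
The first step is to measure the boundary defect $e:=\bigl((\partial_y-\epsilon^{-1})H^{(1)}\bigr)|_{y=0}$. This is a function on the left boundary face $\lf$ of $M^2_{rh,\epsilon}$, a manifold with corners whose faces are the restrictions of sf, sf$_0$, hlf, hff, zf, tf, and so on. At sf$_0$, in the coordinates \eqref{eq:coordsintsf0}, the operator $\partial_y-\epsilon^{-1}$ equals $T^{-1}(\partial_\xi-\tilde\epsilon^{-1})$. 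The leading term of $H^{(1)}$ there is the half-space model \eqref{eq:hcorrmodelsf0} plus the leading term of $H_N$, and the computation at the end of the model section shows that the model satisfies the Robin condition exactly. The same holds at sf with the model \eqref{eq:hcorrmodelsf}. Hence $e$ vanishes to one order better than naive counting at sf and sf$_0$. Since $H_N$ satisfies Neumann conditions exactly, the only new contributions come from the difference between $H^{(1)}_{corr}$ and the model at higher orders, and from the metric terms $b_i$. I would also build $H^{(1)}_{corr}$ to vanish to infinite order at tf, td, and td$_0$, so that $e$ does too.

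Next, I remove the defect by an extension. I choose $G$ on $M^2_{rh,\epsilon}$ with $G|_{y=0}=0$ and $\partial_y G|_{y=0}=-e$. An explicit choice is
\[
G=-y\,\chi\, e(x,x',y',T,\epsilon),
\]
where $e$ is extended trivially off the boundary and $\chi$ is a cutoff near $y=0$, taken in the scale-adapted form $\chi(y/T)$ or $\chi(\xi)$ near sf and sf$_0$. With this $G$, the condition $(\partial_y-\epsilon^{-1})G|_{y=0}=-e$ holds exactly. In the coordinates near sf and sf$_0$ we have $y=T\xi$, so $G=-T\xi\chi(\xi)e$, and the prefactor $T$ compensates the $T^{-1}$ in $\partial_y$. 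Thus $G$ has index sets at sf and sf$_0$ no worse than those of $e$ shifted by the gain from the first step. I expect this to be at least order $-m+2$ at sf and $-m+1$ at sf$_0$. These are strictly better than the leading orders of $H^{(1)}_{corr}$, so $H^{(2)}$ has the same leading terms and stays in $\mathcal A^{-m,-m,-m,-m,0,0}_{h,\epsilon}$. Near hff, hrf, hlf, and zf, where $\epsilon$ and $y$ are comparable, the same formula is pc by the pullback theorem, and $G$ has leading order $0$ there. The extension also keeps a smooth expansion at rf, because $e$ is smooth in the right variables.

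Finally, I check the heat error: $T^2\mathcal L(H^{(2)})=T^2\mathcal L(H^{(1)})+T^2\mathcal L(G)$. The operator $T^2\mathcal L$ is a b-operator near sf and sf$_0$ (see \eqref{eq:heatopsf0notexact}), so $T^2\mathcal L G$ has the same orders as $G$, namely $-m+2$ at sf and $-m+1$ at sf$_0$, and infinite order at td, td$_0$, and tf. This gives the claimed membership in $\mathcal A^{\infty,\infty,-m+2,-m+1,0,0}_{h,\epsilon}$.

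The main obstacle is the first step: verifying the gained order of $e$ at sf$_0$, and at the corners sf$_0\cap$zf and sf$_0\cap$sf, uniformly in $\epsilon$. Near zf the term $\epsilon^{-1}$ is large, and the Robin condition degenerates to a Dirichlet-type condition. To control this, I would use the $v$-representation \eqref{eq:hcorrmodelsf0withv}, which exhibits the cancellation of $\tilde\epsilon^{-1}$. If the gain fails at a corner, the fallback is to choose $H^{(1)}_{corr}$ to agree with the model $H_{corr}$, cut off and lifted in boundary normal coordinates, in a full neighborhood of sf$_0\cup$sf, not only at leading order. The defect then comes purely from the metric terms $b_i$, which carry an explicit factor of $T$.
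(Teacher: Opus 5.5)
\textbf{Overall.} Your strategy is the paper's. It sets $H^{(2)}=H^{(1)}-y\chi XH^{(1)}$, with $X$ the left lift of $\partial_y-\epsilon^{-1}$, and checks that $XH^{(2)}$ vanishes on $\{y=0\}$. Note that the paper calls this face $\rf$ and uses $\lf$ for $\{y'=0\}$, so your labels are swapped.

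\textbf{Where your version is sharper.} You use the restricted defect $e=XH^{(1)}|_{y=0}$, extended independently of $y$, together with the model identity $(T\partial_\xi-\tilde\epsilon^{-1})(H_N+H_{corr})|_{\xi=0}=0$. Off $y=0$ the unrestricted $XH^{(1)}$ contains $\partial_yH_N$, and $y\chi\,\partial_yH_N$ is only of order $-m$ at $\sf$. Your $G=-T\xi\chi e$, by contrast, genuinely has orders $-m+2$ at $\sf$ and $-m+1$ at $\sf_0$. What you call the main obstacle is not one. The vanishing of the leading coefficient of $e$ is an identity on all of $\rf\cap\sf_0$ and $\rf\cap\sf$, and $\rf\cap\sf_0$ never meets $\zf$, so no fallback is needed.

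\textbf{The gap: your cutoff.} On $\sf_0$ the set $\{\xi<\delta\}$ contains points of $\zf$. The faces $\rf$ and $\zf$ are disjoint only because the corner $\{\xi=\tilde\epsilon=0\}$ (globally $\{y=\epsilon=0\}$) has been blown up into $\hrf$. Your $\chi(\xi)$ still equals $1$ on the part of $\zf\cap\sf_0$ with $0<\xi<\delta$, and an unscaled cutoff in $y$ has the same problem.

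Since $e$ carries the factor $\epsilon^{-1}$, it has order $-1$ at $\hrf\cap\rf$ in general; the paper records order $-1$ for $XH^{(1)}$ at $\hrf$. Your $y$-independent extension therefore gives $G\sim T^{-m+1}\xi\,\tilde\epsilon^{-1}$ near $\zf\cap\sf_0$, which is order $-1$ at $\zf$. This contradicts your claim that $G$ has leading order $0$ at $\zf$, and it takes $H^{(2)}$ out of $\mathcal A^{-m,-m,-m,-m,0,0}_{h,\epsilon}$.

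\textbf{The fix.} The missing observation is the one the paper flags as critical: in the blown-up space, $\rf$ is disjoint from $\zf$, and also from $\lf$, $\hlf$, $\td$ and $\td_0$. So $\chi$ must be a cutoff in a boundary defining function of $\rf$ on that space. Near $\hrf$ this means a function of $y/\epsilon$; near $\sf_0$, for example, $\chi(\xi)\chi(\xi/\tilde\epsilon)$. Such a cutoff vanishes near $\zf$. With that choice the factor $y$, which vanishes to first order at $\hrf$ and $\hff$, absorbs the $\epsilon^{-1}$, and $G$ has order $0$ at those faces as you need.
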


\begin{proof}
To do this, observe that $y$ is a globally defined function on a neighborhood of the boundary in $\Omega$, so we may consider the operator $\partial_y-\epsilon^{-1}$ and lift it from the left to act on functions on $M^2_{h,\epsilon}$ or $M^2_{rh,\epsilon}$. Call the resulting operator $X$. Observe that $yX$ is a sum of a b-operator (the lift of $y\partial_y$) and a multiplication operator (by $\epsilon^{-1}y$), so $yXH^{(1)}$ is pc on $M^2_{h,\epsilon}$, and thus $XH^{(1)}$ is also pc there. The leading orders of $XH^{(1)}$ are:
\begin{itemize}
\item 0 at rf, because $H^{(1)}$ is smooth there;
\item 0 at lf;
\item $-1$ at hrf, hlf, hff, hnf, and zf, because of the $\epsilon^{-1}$;
\item $-m$ at sf$_0$; the $\partial_y$ costs one order, as does the $\epsilon^{-1}$;
\item $-m+1$ at sf; the $\partial_y$ costs one order;
\item $-m$ at td;
\item $-m-1$ at td$_0$; the $\epsilon^{-1}$ costs one order;
\item $\infty$ at tf.
\end{itemize}
The restriction of $XH^{(1)}$ to rf is what we are most interested in. If it were zero then $H^{(1)}$ would satisfy the Robin boundary condition, but it is not.

What we will do is define
\[H^{(2)} := H^{(1)}-y\chi XH^{(1)},\]
where $\chi$ is a smooth cutoff function on $M_{\epsilon,h}^2$, equal to 1 on a small neighborhood of rf and supported inside a slightly larger neighborhood of rf. In particular, since rf does not intersect td, td$_0$, lf, hlf, or zf\footnote{This is actually a critical point -- because of the blow-up of hrf, rf and zf no longer intersect.}, we may assume that $\chi$ is identically zero near those boundary hypersurfaces. Thus $y\chi XH^{(1)}$ is pc on $M^2_{h,\epsilon}$ with leading orders:
\begin{itemize}
\item 1 at rf, because of the extra $y$;
\item 0 at hrf, hff, because of the $y$;
\item $-m+1$ at sf$_0$;
\item $-m+2$ at sf;
\item $\infty$ at tf, lf, zf, hlf, td, and td$_0$.
\end{itemize}
Adding this to $H^{(1)}$ to create $H^{(2)}$ does not worsen any of the leading orders of $H^{(1)}$. Applying the b-operator $T^2\mathcal L$ to this correction leaves the leading orders unchanged, except that the order 1 at rf becomes order 0. Moreover, if we apply $X$ to $H^{(2)}$, we get
\[XH^{(2)} = \partial_yH^{(1)}-  \chi XH^{(1)} - (y(\partial_y\chi) XH^1) - y\chi\partial_{yy}H^{(1)} - \epsilon^{-1}H^{(1)}+\epsilon^{-1}y\chi X H^{(1)},\]
which is zero when we restrict to rf; only the first, second, and fifth terms survive, and they cancel. Thus $H^{(2)}$ satisfies Robin boundary conditions, as desired. 
\end{proof}

As in \cite[Proposition 4.3]{NRS25}, we may also guarantee that the error vanishes to infinite order at rf:
\begin{proposition} There  exists an element $H^{(3)}$ of the form \eqref{eq:formofh1}, satisfying Robin boundary conditions with parameter $\epsilon^{-1}$ in the left variable, with
\[H^{(3)}(z,z',T,\epsilon)\in \mathcal A^{-m,-m,-m,-m,0,0}_{h,\epsilon},\]
for which
\[T^2\mathcal L(H^{(3)})\in \mathcal A^{\infty,\infty,-m+2,-m+1,\infty,0}_{h,\epsilon}.\]
\end{proposition}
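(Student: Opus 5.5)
The plan is to follow \cite[Proposition 4.3]{NRS25}: remove the error at rf order by order with a correction that vanishes at rf to the matching order, and then take a Borel sum. Write $E^{(2)}:=T^2\mathcal L(H^{(2)})$. By the previous proposition, $E^{(2)}$ has index set $0$ at rf. We may also assume it has a smooth expansion at rf, since $H^{(1)}$ does and the correction $y\chi XH^{(1)}$ is smooth in $y$ near rf. Near rf we use $y$ as a boundary defining function. Away from sf, sf$_0$, hrf and hff, the lifted operator $T^2\mathcal L$ then acts in the normal direction as $-T^2\partial_{yy}$ plus terms that are tangential or of lower order in $y$. This is a genuine second-order operator transversal to rf, not a b-operator.

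The inductive step is as follows. Suppose we have $H^{(2,k)}$ of the form \eqref{eq:formofh1} that satisfies the Robin condition in the left variable, keeps the index sets of $H^{(2)}$, and has error $T^2\mathcal L H^{(2,k)}=y^kF_k+O(y^{k+1})$ at rf with $F_k$ smooth on rf. Set
\[H^{(2,k+1)}:=H^{(2,k)}+\chi\, y^{k+2}\frac{F_k}{T^2(k+2)(k+1)},\]
with $\chi$ the cutoff near rf used before. Then $-T^2\partial_{yy}$ applied to the new term cancels $y^kF_k$, and all remaining contributions are $O(y^{k+1})$. The correction vanishes to order $k+2\ge 2$ at rf, so it changes neither the value nor the $y$-derivative at $y=0$. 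The Robin condition $\partial_y u=\epsilon^{-1}u$ on rf is therefore preserved exactly, and this holds for every $\epsilon$, including large $\epsilon^{-1}$.

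The main obstacle is the bookkeeping of orders at the other faces that meet rf, namely sf, sf$_0$, hrf and hff, together with the factor $T^{-2}$ in the correction. Near sf and sf$_0$ I would not use $y$ as the variable. Instead I would use $\xi=y/T$, in which $T^2\partial_{yy}=\partial_{\xi\xi}$, and write the correction as $\xi^{k+2}T^{-2}\cdot T^{k+2}F_k/((k+2)(k+1))$. Since $y^kF_k=\xi^kT^kF_k$, the coefficient $T^kF_k$ has the same orders at sf and sf$_0$ as the error, namely $-m+2$ and $-m+1$. The correction $\xi^{k+2}T^kF_k/((k+2)(k+1))$ then has exactly these orders, which are better than the orders $-m$, $-m$ of $H^{(2)}$. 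So neither the index sets of the kernel nor those of the error get worse.

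At hrf and hff the $\epsilon$-direction enters through the Robin parameter only. The coefficient of the correction is bounded (order $0$) there because $F_k$ is, so the leading order $0$ is kept. Since $\chi$ vanishes near td, td$_0$, lf, hlf and zf, nothing changes at those faces.

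Finally, take $H^{(3)}$ to be an asymptotic (Borel) sum of the corrections at rf, so that $H^{(3)}\sim H^{(2,k)}$ modulo $O(y^{k+2})$ for every $k$. Then $T^2\mathcal L H^{(3)}$ vanishes to infinite order at rf, the Robin condition holds, and all other index sets are as claimed.
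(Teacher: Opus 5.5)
Your proposal is correct and is essentially the paper's argument. The paper's $A_2=\frac12 y^2T^{-2}\chi\,(T^2\mathcal L H^{(2)})$ is exactly your $k=0$ correction, which is then iterated at rf and Borel-summed, with the Robin condition preserved because each correction vanishes to second order at rf. Your sign (adding the correction) is the one consistent with the $-\partial_{yy}$ in $\mathcal L$, so the paper's $H^{(2)}-A_2$ should read $H^{(2)}+A_2$.

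One justification needs fixing. At hrf and hff the $y$-Taylor coefficient $F_k$ is in general not bounded, since $y$ itself vanishes at those faces. Your conclusion still holds, because $\chi y^{k+2}F_k/T^2$ gains two orders there; this is the paper's ``order $2$ at hrf and hff''.
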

\begin{proof} This is a similar argument to the previous proposition. Set
\[A_2:=\frac 12y^2\chi\mathcal LH^{(2)} = \frac 12y^2T^{-2}\chi(T^2\mathcal LH^{(2)}).\]
Then $A_2$ has leading orders 2 at rf, 2 at hrf and hff, $-m+1$ at sf$_0$, and $-m+2$ at sf. Consider $H^{(2)}-A_2$; it is pc, it still satisfies Robin boundary conditions, its leading orders are the same as those of $H^{(2)}$, and $T^2\mathcal LA_2$ has leading orders no worse than those of $T^2\mathcal LH^{(2)}$. But $(T^2\mathcal L)(H^{(2)}-A_2)$ now decays to first order at rf. This construction may be iterated, producing $A_j$ so that $H^{(1)}-\sum_{j=2}^kA_j$ vanishes to order $k-1$ at $E$. Doing a Borel summation, we let
\[H^{(3)}:=H^{(2)}-\sum_{j=2}^{\infty}A_j.\]
This is the desired kernel.
\end{proof}

\subsection{The Neumann series}
Finally, we invert the remaining error using a Neumann series, precisely as in \cite{NRS25}. Let
\[P^{(3)} := T^2\mathcal L(H^{(3)}).\]
Then, assuming the series is convergent, we will let
\[H^{(4)}:=H^{(3)}-H^{(3)}\sum_{j=1}^{\infty}(-T^{-2}P^{(3)})^j,\]
where kernel composition is taken in the sense of convolution operators. By construction, $H^{(4)}$ is a solution of the heat equation, with delta function initial condition, and satisfies Robin boundary conditions. It must therefore be the true heat kernel.

It remains to consider the convergence of the series. Observe that
\[-T^{-2}P^{(3)}\in\mathcal A_{h,\epsilon}^{\infty,\infty,-m,-m-1,\infty,0}.\]
We now use Theorem \ref{thm:composition}. A direct application of this theorem shows that
\[(-T^{-2}P^{(3)})^2\in\mathcal A_{h,\epsilon}^{\infty,\infty,-m+2,-m,\infty,0},\]
and a straightforward induction shows that in fact
\[(-T^{-2}P^{(3)})^j\in\mathcal A_{h,\epsilon}^{\infty,\infty,-m+2j-2,-m+j-2,\infty,0}.\]
The sum may therefore be taken asymptotically and yields an element of the same space as $-T^{-2}P^{(3)}$. However, for sufficiently large $j$, the kernel is in fact uniformly bounded by a constant independent of $t\in[0,1]$ and independent of the spatial variables. As in \cite{NRS25} and \cite{MaVe}, this shows that the sum is convergent. 

Finally, we apply the composition theorem to $H^{(3)}\sum_{j=1}^{\infty}(-T^{-2}P^{(3)})^j$. We see that it is an element of
\[\mathcal A_{h,\epsilon}^{\infty,\infty,-m+2,-m+1,0,0}.\]
Thus, by subtracting this from $H^{(3)}$, we show that the true heat kernel is pc:
\[H(z,z',T,\epsilon)\in \mathcal A_{h,\epsilon}^{-m,-m,-m,-m,0,0}.\]

We may also read off the leading order terms in the short-time expansions of $H(z,z',T,\epsilon)$ at sf, sf$_0$, td, and td$_0$. The first two orders in the asymptotic expansion of the true heat kernel at sf, and the first one order at sf$_0$, are the same as those of $H^{(3)}$. By construction, these are the same as those of $H^{(2)}$, and similarly the same as those of $H^{(1)}$, which is
\[H_{N}(z,z',T)+H_{corr}^{(1)}(z,z',T).\]
So the difference
\[H(z,z',T,\epsilon)-H_N(z,z,T)\]
which has order $-m+1$ at sf and order $-m$ at sf$_0$, has the same leading order terms as $H_{corr}^{(1)}$, which are in turn the same as those of the model half-space kernel $H_{corr}$ given in \eqref{eq:explicithalfspace}. 

In short we have shown:
\begin{itemize}
\item The Robin heat kernel is pc on $M^2_{h}$, with leading order $-m$ at each of sf, sf$_0$, td, and td$_0$.
\item Its leading orders are the same, up to but not including order $-m+2$ at sf and order $-m+1$ at sf$_0$, as those of
\[H_N(z,z',T) + H_{corr}(z,z',T,\epsilon),\]
where $H_{corr}(z,z',T,\epsilon)$ is given in \eqref{eq:explicithalfspace}. 
\end{itemize}


\section{From kernel to trace}
We now pass from the polyhomogeneity of the heat kernel to polyhomogeneity of the heat trace by first restricting to the diagonal and then integrating.

By Lemma \ref{lem:doubletosingle}, the diagonal is a $p$-submanifold of $M^2_{h,\epsilon}$, isomorphic to $M_{\epsilon,h}$. Thus by restricting to the diagonal, we see that the diagonal heat kernel
\[H(z,z,T,\epsilon)\]
is pc on $M_{\epsilon,h}$. It decays to infinite order at tf. Its leading orders at ff and ff$_0$ are the same, to two orders at ff and one order at ff$_0$, as those of
\[H_N(z,z,T) + H_{corr}(z,z,T,\epsilon),\]
where now the correction restricted to the diagonal is to leading order:
\begin{equation}\label{eq:diaghkhalfspace}
H_{corr}(z,z,T,\epsilon) = - \frac{e^{2y/\epsilon+T^2/\epsilon^2}}{(4\pi T^2)^{(m-1)/2}\epsilon}\erfc(\frac{y}{T}+\frac{T}{\epsilon}).
\end{equation}
Observe that this leading order behavior is independent of $x$.

Now apply Proposition \ref{prop:singletotrace}. Since $H(z,z,T,\epsilon)$ is pc on $M_{\epsilon,h}$ it is also pc after blowing up $\{T=\epsilon=0\}$; call the new front face of that blow-up sif$_0$. It is immediate from the pushforward theorem that
\[\Tr H(T,\epsilon)\]
is pc on $X$. The faces sf and td map into te and the faces sf$_0$, td$_0$, and sif$_0$ map into fe. We can say more by applying the pushforward theorem to each term separately. Applying it to $H_N$ gives the usual Neumann heat trace expansion for a manifold with boundary. But $H_{corr}$ vanishes to infinite order at td, td$_0$, and sif$_0$. So the expansion of $\Tr H_{corr}(T,\epsilon)$ at te depends only on the behavior of $H_{corr}(z,z,T,\epsilon)$ at sf, and the expansion at fe depends only on the behavior at sf$_0$. Furthermore, the expansions of
\[\Tr H(T,\epsilon)-\Tr H_N(T)\]
at te and fe are, to one order, the same as the expansions of the analogous correction term for a half-space.

We now identify these expansions by computing the trace of the correction term for a half-space. This involves integrating in $y$, as the integral in $x$ just gives a factor of $|\partial\Omega|$. Since
\[\exp[\frac{2y}{\epsilon}+\frac{T^2}{\epsilon^2}]\erfc(\frac yT+\frac T\epsilon)\le Ce^{-(y/T)^2},\]
the integral over any fixed range in $y$ will decay to infinite order in $T$, independent of $\epsilon$. Thus we may as well integrate all the way out to $y=\infty$. We are interested in the small-$T$ and small-$\epsilon$ asymptotics of the following integral:
\begin{equation}\label{eq:integral}
-(4\pi T^2)^{(-m+1)/2}\epsilon^{-1}\int_0^{\infty}\exp[\frac{2y}{\epsilon}+\frac{T^2}{\epsilon^2}]\erfc(\frac yT+\frac T\epsilon)\, dy.
\end{equation}
Write out the definition of the erfc function \cite[(7.2.2)]{dlmf}; we get
\begin{equation}\label{eq:integral2}
-(4\pi T^2)^{(-m+1)/2}\epsilon^{-1}\frac{2}{\sqrt\pi}\int_0^{\infty}\int_{\frac yT+\frac T\epsilon}^{\infty}\exp[\frac{2y}{\epsilon}+\frac{T^2}{\epsilon^2} - s^2]\, ds\, dy.
\end{equation}
Now change variables to $u=s-\frac yT-\frac T\epsilon$:
\begin{equation}\label{eq:integral3}
-(4\pi T^2)^{(-m+1)/2}\epsilon^{-1}\frac{2}{\sqrt\pi}\int_0^{\infty}\int_{0}^{\infty}\exp[-u^2-2u(\frac yT+\frac T\epsilon)-\frac{y^2}{T^2}]\, du\, dy.
\end{equation}
Change to $\xi=y/T$ and switch the order of integration:
\begin{equation}\label{eq:integral4}
-(4\pi T^2)^{(-m+1)/2}\epsilon^{-1}\frac{2T}{\sqrt\pi}\int_0^{\infty}\int_{0}^{\infty}\exp[-u^2-2u(\xi+\frac T\epsilon)-\xi^2]\, d\xi\, du.
\end{equation}
Rearranging yields
\begin{equation}\label{eq:integral5}
-(4\pi T^2)^{(-m+1)/2}\epsilon^{-1}\frac{2T}{\sqrt\pi}\int_0^{\infty}\exp[-2u\frac{T}{\epsilon}]\int_{0}^{\infty}\exp[-(\xi+u)^2]\, d\xi\, du,
\end{equation}
which can in turn be written in terms of an error function:
\begin{equation}\label{eq:integral6}
-(4\pi T^2)^{(-m+1)/2}\epsilon^{-1}T\int_0^{\infty}\exp[-2u\frac{T}{\epsilon}]\erfc u\,  du.
\end{equation}
This integral is a Laplace transform of the complementary error function which is well known. Using, for example, \cite[(12.13.99)]{grry}, we have
\begin{equation}\label{eq:integral7}
-(4\pi T^2)^{(-m+1)/2}\frac 12 (1-e^{(T/\epsilon)^2}\textrm{erfc}(T/\epsilon)).
\end{equation}
This encodes the leading behavior of the Robin correction term, up to an error which has order $t^{(-m+2)/2}$ at both te and fe. Our main theorem, Theorem \ref{thm:main}, follows.


\section{Riesz means}
We now pass from our heat trace results to Riesz means and prove Theorem \ref{thm:riesz}. This is essentially done by taking an inverse Laplace transform. Care is needed to obtain the appropriate remainder estimates via a Tauberian theorem. Throughout, we follow the program of Frank and Larson \cite{franklarson}.

Recall
\begin{equation}\label{eq:maintheoremrecall}
\Tr H_{0}(t) - \Tr H_{\kappa}(t) = \frac{|\partial\Omega|}{2\cdot (4\pi t)^{(m-1)/2}}(1-e^{\kappa^2t}\erfc(\kappa\sqrt t)) - R(\kappa,t).
\end{equation}
Consider the function
\[f(\lambda,\kappa) := \Tr (-\Delta^0-\lambda)_{-}-\Tr(-\Delta^k-\lambda)_{-}.\]
This is the difference of Riesz means between the Neumann and Robin traces. As in \cite[Section 4.1] {franklarson}, $f(\lambda,\kappa)$ is nondecreasing in $\lambda$. It is also nondecreasing in $\kappa$, and its $\kappa\to\infty$ limit is the function $f(\lambda)$ from \cite[Section 4.1]{franklarson}. Furthermore,
\[\int_0^{\infty}e^{-t\lambda}df(\lambda,\kappa)  = t^{-1}(\Tr H^{0}(t)-\Tr H^{\kappa}(t)).\]
We need to find a density $dh(\lambda,\kappa)$ for which
\[\int_0^{\infty}e^{-t\lambda}dh(\lambda,\kappa) = t^{-1} \frac{|\partial\Omega|}{2\cdot (4\pi t)^{(m-1)/2}}(1-e^{\kappa^2t}\erfc(\kappa\sqrt t).\]
This is an inverse Laplace transform. We get
\[dh(\lambda,\kappa) = \frac{|\partial\Omega|\lambda^{(m-1)/2}}{2\cdot(4\pi)^{(m-1)/2}\Gamma(\frac{m+1}{2})} - \frac{|\partial\Omega|}{2\cdot(4\pi)^{(m-1)/2}}\mathcal L^{-1}(t^{-(m+1)/2}e^{\kappa^2 t}\erfc(\kappa\sqrt t)).\]
\begin{lemma} We have
\[\mathcal L^{-1}(t^{-(m+1)/2}e^{\kappa^2 t}\erfc(\kappa\sqrt t)) = \frac{2}{\pi\Gamma(\frac{m-1}{2})}\int_0^{\lambda}(\lambda-u)^{\frac{m-3}{2}}\arctan(\frac{\sqrt u}{\kappa})\, du.\]
\end{lemma}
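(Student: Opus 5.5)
We verify the identity on the Laplace-transform side: we compute the forward Laplace transform in $\lambda$ of the right-hand side and check that it equals $t^{-(m+1)/2}e^{\kappa^2t}\erfc(\kappa\sqrt t)$. Injectivity of the Laplace transform then gives the claim. The right-hand side is a convolution in $\lambda$ of two functions:
\[
a(\lambda)=\frac{\lambda_+^{(m-3)/2}}{\Gamma(\frac{m-1}{2})},\qquad
b(u)=\frac{2}{\pi}\arctan\Big(\frac{\sqrt u}{\kappa}\Big).
\]
By the convolution theorem, its transform is the product of the two transforms. The first factor is standard: $\mathcal L(a)(t)=t^{-(m-1)/2}$, which is valid since $m\ge 2$ gives $\frac{m-3}{2}>-1$.

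It therefore suffices to show
\[
\int_0^\infty e^{-tu}\,\frac{2}{\pi}\arctan\Big(\frac{\sqrt u}{\kappa}\Big)\,du=t^{-1}e^{\kappa^2t}\erfc(\kappa\sqrt t).
\]
We first integrate by parts. The boundary terms vanish: $\arctan$ is zero at $u=0$, and the exponential kills the term at infinity. This turns the left-hand side into
\[
\frac{2}{\pi t}\int_0^\infty e^{-tu}\,\frac{d}{du}\arctan\Big(\frac{\sqrt u}{\kappa}\Big)\,du
=\frac{1}{\pi t}\int_0^\infty e^{-tu}\,\frac{\kappa}{\sqrt u\,(\kappa^2+u)}\,du.
\]
Next we substitute $u=s^2$, which reduces the claim to the classical identity
\[
\frac{2\kappa}{\pi}\int_0^\infty \frac{e^{-ts^2}}{\kappa^2+s^2}\,ds=e^{\kappa^2 t}\erfc(\kappa\sqrt t).
\]
To prove it, call the left-hand side $F(t)$ and verify three facts:
\begin{itemize}
\item \emph{Limit at $t=0$:} $F(0)=1$, since $\frac{2\kappa}{\pi}\int_0^\infty\frac{ds}{\kappa^2+s^2}=1$.
\item \emph{Differential equation:} $F'(t)-\kappa^2F(t)=-\frac{2\kappa}{\pi}\int_0^\infty e^{-ts^2}ds=-\frac{\kappa}{\sqrt{\pi t}}$.
\item \emph{Comparison:} the function $v(\kappa\sqrt t)$, with $v$ as in \eqref{eq:defofv}, satisfies the same ODE and the same initial value.
\end{itemize}
Uniqueness for this first-order ODE then gives $F(t)=v(\kappa\sqrt t)$. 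Alternatively, one can simply cite the tabulated identity (e.g.\ DLMF 7.7.1 or \cite{grry}).

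Multiplying the two transforms gives $t^{-(m-1)/2}\cdot t^{-1}e^{\kappa^2t}\erfc(\kappa\sqrt t)$, which is exactly the desired transform. The main obstacle is the Laplace transform of $\arctan(\sqrt u/\kappa)$, that is, the ODE/uniqueness step above. There is also the degenerate case $\kappa=0$, where the formula should be read with $\arctan(\sqrt u/0)=\pi/2$; the identity then becomes $\mathcal L^{-1}(t^{-(m+1)/2})=\lambda^{(m-1)/2}/\Gamma(\frac{m+1}{2})$, which is immediate. All integrals converge absolutely for $t>0$, so the convolution theorem and Fubini are justified.
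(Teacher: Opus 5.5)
Your proof is correct and follows the paper's route. You use the convolution theorem to split off $\mathcal L^{-1}(t^{-(m-1)/2})=\lambda^{(m-3)/2}/\Gamma(\frac{m-1}{2})$, and you obtain $\mathcal L^{-1}(t^{-1}e^{\kappa^2t}\erfc(\kappa\sqrt t))=\frac{2}{\pi}\arctan(\sqrt\lambda/\kappa)$ by integrating a known Laplace pair; your integration by parts is the paper's ``antiderivative'' step. The only difference is that you also verify the tabulated pair yourself via the ODE argument, rather than citing \cite{grry}.
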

\begin{proof} This follows from the Laplace convolution theorem and the fact that
\[\mathcal L^{-1}(t^{-1}e^{\kappa^2 t}\erfc(\kappa\sqrt t)) = \frac{2}{\pi}\arctan(\frac{\sqrt\lambda}{\kappa}).\]
That fact, in turn, follows by taking the antiderivative of the Laplace transform pair \cite[(12.13.5)]{grry}.
\end{proof}
We therefore conclude
\begin{equation}
dh(\lambda,\kappa) = \frac{|\partial\Omega|\lambda^{(m-1)/2}}{2\cdot(4\pi)^{(m-1)/2}\Gamma(\frac{m+1}{2})} - \frac{|\partial\Omega|}{\pi\cdot(4\pi)^{(m-1)/2}\Gamma(\frac{m-1}{2})}\int_0^{\lambda}(\lambda-u)^{\frac{m-3}{2}}\arctan(\frac{\sqrt u}{\kappa})\, du.
\end{equation}
Introducing the notation $L^{sc}_{\gamma,d}$ from \cite{franklarson}[(8)],
\begin{equation}\label{eq:dh}
dh(\lambda,\kappa) = \frac{m+1}{4}L^{sc}_{1,m-1}|\partial\Omega|\lambda^{(m-1)/2}- \frac{m^2-1}{4\pi}L^{sc}_{1,m-1}|\partial\Omega|\int_0^{\lambda}(\lambda-u)^{\frac{m-3}{2}}\arctan(\frac{\sqrt u}{\kappa})\, du.
\end{equation}
We later need an upper bound for this, but since the second term on the right is negative we get trivially that
\begin{equation}\label{eq:upperbounddh}
dh(\lambda,\kappa)\le\frac{m+1}{4}L^{sc}_{1,m-1}|\partial\Omega|\lambda^{(m-1)/2}.
\end{equation}

Now, as in \cite[Section 4.2]{franklarson}, consider the function
\[g(\lambda,\kappa) := \Tr (-\Delta^0-\lambda)_- - \Tr(-\Delta^{\kappa}-\lambda)_- - h(\lambda,\kappa).\] 
We just showed that $g+h$ is nondecreasing in $\lambda$ and $\kappa$. Therefore the Borel measure
\[\int_{\omega} dg(\lambda,\kappa) + \int_{\omega}dh(\lambda,\kappa)\]
is non-negative. More to the point, by \eqref{eq:upperbounddh}, the Borel measure
\[\tilde\mu(\omega) := \int_{\omega} dg(\lambda,\kappa) + \frac{m+1}{4}L^{sc}_{1,m-1}|\partial\Omega|\int_{\omega}\lambda^{(m-1)/2}d\lambda\]
is also non-negative.
Observe additionally that
\[\int_0^{\infty}e^{-t\lambda} dg(\lambda,\kappa) = t^{-1}(\Tr H^0(t) - \Tr H^k(t)- t^{-1}\frac{|\partial\Omega|}{2\cdot (4\pi t)^{(m-1)/2}}(1-e^{\kappa^2t}\erfc(\kappa\sqrt t)).\]
By Theorem \ref{thm:main}, there is a constant $C_0$ independent of $\kappa$ for which
\begin{equation}\label{eq:hkremainderineq}
\left|\int_0^{\infty}e^{-t\lambda} dg(\lambda,\kappa)\right|=\left|t^{-1}R(\kappa,t)\right|\le C_0t^{-m/2}.
\end{equation}

We now apply a Tauberian remainder theorem \cite[Proposition 8.1]{franklarson}. In their notation, $u=\lambda$, $d=m$, $\tilde\mu$ is $\tilde\mu$, $\mu$ is $dg(\lambda,\kappa)$,  $N=1$, $\nu_1=(m+1)/2$, and $K_1=\frac 12L^{sc}_{1,m-1}|\partial\Omega|$. We also use $\gamma=\gamma-1$; this is non-negative as required because our $\gamma$ is at least 1. The result of this theorem is the following. For some constants $B$, $C$, and $k_0$ depending only on $\gamma$ and $m$, and for any $k\ge k_0$,
\begin{equation}\label{eq:tauberianresult}
\left|\int_0^{\lambda}(1-v/\lambda)^{\gamma-1}dg(v,\kappa) \right|\le B^k\sup_{1\le s\le k}\left|\int_0^{\infty}e^{-sv/\lambda}dg(v,\kappa)\right| + \frac{C}{k^{\gamma}}|\partial\Omega|\lambda^{(m+1)/2}.
\end{equation}
Note we have absorbed $\frac 12L^{sc}_{1,m-1}$ into $C$ here. Now we apply \eqref{eq:hkremainderineq} with $t=s/\lambda$ and see immediately that, possibly renaming some constants,
\begin{equation}\label{eq:tauberianresultmk2}
\left|\int_0^{\lambda}(1-v/\lambda)^{\gamma-1}dg(v,\kappa) \right|\le B^kC_0\lambda^{m/2} + \frac{C}{k^{\gamma}}|\partial\Omega|\lambda^{(m+1)/2}.
\end{equation}

As in \cite{franklarson} we now make an optimal choice of $k$. In this case we let
\[k=\max\{k_0,\lfloor \frac 12\log_B(\sqrt{\lambda})\rfloor\}.\]
Then for sufficiently large $\lambda$, specifically
\[\lambda\ge B^{4k_0},\]
we have
\begin{equation}\label{eq:tauberianresultmk3}
\left|\int_0^{\lambda}(1-v/\lambda)^{\gamma-1}dg(v,\kappa) \right|\le C_0\lambda^{m/2 + 1/4} + \frac{C'}{(\log|\lambda|)^{\gamma}}|\partial\Omega|\lambda^{(m+1)/2}.
\end{equation}
Putting this together, for this domain, there exist constants $C$ and $\Lambda$ both depending on the domain such that for $\lambda\ge\Lambda$,
\begin{equation}\label{eq:tauberianresultmk4}
\left|\int_0^{\lambda}(1-v/\lambda)^{\gamma-1}dg(v,\kappa) \right|\le \frac{C}{(\log|\lambda|)^{\gamma}}\lambda^{(m+1)/2},
\end{equation}
which in particular is $o(\lambda^{(m+1)/2})$.

On the other hand, we may compute directly that
\begin{multline}\label{eq:gcomputation}
\int_0^{\lambda}(1-v/\lambda)^{\gamma-1}dg(v,\kappa) = \int_0^{\lambda}(1-v/\lambda)^{\gamma-1}d\Tr(\Delta^0-v)_- - \int_0^{\lambda}(1-v/\lambda)^{\gamma-1}d\Tr(\Delta^\kappa-v)_-\\ - \int_0^{\lambda}(1-v/\lambda)^{\gamma-1}dh(v,\kappa).
\end{multline}
The first term on the right hand side of \eqref{eq:gcomputation} may be integrated by parts. Both boundary terms vanish, yielding
\[\lambda^{-1}\int_0^{\lambda}(\gamma-1)(1-v/\lambda)^{\gamma-2}\Tr(\Delta^0-v)_-\, dv.\]
Changing variables to $\tau=\lambda-v$ gives
\[\lambda^{-\gamma+1}(\gamma-1)\int_0^{\infty}\tau^{\gamma-2}\Tr(\Delta_0-\lambda+\tau)_- d\tau.\]
Applying \cite[(19)]{franklarson} yields
\[\frac{\lambda^{1-\gamma}}{\gamma}\Tr (\Delta^0-\lambda)^{\gamma}_-.\]
Similarly, the second term on the right-hand side of \eqref{eq:gcomputation} is
\[-\frac{\lambda^{1-\gamma}}{\gamma}\Tr (\Delta^{\kappa}-\lambda)^{\gamma}_-.\]
Thus
\begin{equation}
\Tr (\Delta^0-\lambda)^{\gamma}_- - \Tr (\Delta^\kappa-\lambda)^{\gamma}_- = \gamma\int_0^{\lambda}(\lambda - v)^{\gamma-1}dh(v,\kappa) + O(\frac{\gamma\lambda^{\gamma+(m-1)/2}}{(\log|\lambda|)^{\gamma}}).
\end{equation}

All that remains is to identify the term with a $dh$. First the first term in \eqref{eq:dh}; plug it in and get
\[\gamma\frac{m+1}{4}L^{sc}_{1,m-1}|\partial\Omega|\int_0^{\lambda}(\lambda-v)^{\gamma-1}v^{(m-1)/2}\, dv.\]
Let $v=\lambda\bar v$ to write this in terms of the beta function:
\[\gamma\frac{m+1}{4}L^{sc}_{1,m-1}|\partial\Omega|\lambda^{\gamma+(m-1)/2}B(\gamma,(m+1)/2).\]
Plugging in the expressions for $L^{sc}_{1,m-1}$ and the beta function gives
\[\frac 14\gamma(m+1)|\partial\Omega|\lambda^{\gamma+(m-1)/2}\frac{1}{(4\pi)^{(m-1)/2}\Gamma(1+(m+1)/2)}\frac{\Gamma((m+1)/2)\Gamma(\gamma)}{\Gamma(\gamma+(m+1)/2)},\]
from which simplification yields
\[\frac 12 L^{sc}_{\gamma,m-1}|\partial\Omega|\lambda^{\gamma+(m-1)/2}.\]

Now the second term in \eqref{eq:dh}. Plug it in and get
\[- \frac{\gamma(m^2-1)}{4\pi}L^{sc}_{1,m-1}|\partial\Omega|\int_0^{\lambda}\int_0^{v}(\lambda-v)^{\gamma-1}(v-u)^{\frac{m-3}{2}}\arctan(\frac{\sqrt u}{\kappa})\, du\, dv.\]
Change the order of integration:
\[-\frac{\gamma (m^2-1)}{4\pi}L^{sc}_{1,m}|\partial\Omega| \int_0^{\lambda}\arctan(\frac{\sqrt u}{\kappa})\int_u^{\lambda}(\lambda-v)^{\gamma-1}(v- u)^{(m-3)/2}\, dv\, du.\]
Let $\bar v=v-u$:
\[-\frac{\gamma (m^2-1)}{4\pi}L^{sc}_{1,m-1}|\partial\Omega| \int_0^{\lambda}\arctan(\frac{\sqrt u}{\kappa})\int_0^{\lambda-u}(\lambda-u-\bar v)^{\gamma-1}(\bar v)^{(m-3)/2}\, d\bar v\, du.\]
Now let $\hat v=\bar v/(\lambda - u)$:
\[-\frac{\gamma (m^2-1)}{4\pi}L^{sc}_{1,m-1}|\partial\Omega| \int_0^{\lambda}\arctan(\frac{\sqrt u}{\kappa})(\lambda -u)^{\gamma+(m-3)/2}\int_0^{1}(1-\hat v)^{\gamma-1}(\hat v)^{(m-3)/2}\, d\hat v\, du.\]
The inner integral is a beta function $B(\gamma,(m-1)/2)$ and thus we get
\[-\frac{\gamma (m^2-1)}{4\pi}L^{sc}_{1,m-1}B(\gamma,(m-1)/2)|\partial\Omega| \int_0^{\lambda}\arctan(\frac{\sqrt u}{\kappa})(\lambda -u)^{\gamma+(m-3)/2}\, du.\]
Writing out the coefficient and the beta function gives significant cancellation, yielding a term
\[-\frac{1}{\pi}(\gamma+\frac{m-1}{2})L^{sc}_{\gamma,m-1}|\partial\Omega|\int_0^{\lambda}\arctan(\frac{\sqrt u}{\kappa})(\lambda -u)^{\gamma+(m-3)/2}\, du.\]
Letting $u=\lambda x^2$:
\[-\frac{2}{\pi}(\gamma+\frac{m-1}{2})L^{sc}_{\gamma,m-1}|\partial\Omega|\lambda^{\gamma+(m-1)/2}\int_0^{1}x\arctan(\frac{\sqrt\lambda}{\kappa}x)(1-x^2)^{\gamma+(m-3)/2}\, dx.\]

Putting everything together, we now obtain the final result:
\begin{multline}
\Tr (\Delta^0-\lambda)^{\gamma}_- - \Tr (\Delta^\kappa-\lambda)^{\gamma}_-
= \frac 12L^{sc}_{\gamma,m-1}|\partial\Omega|\lambda^{\gamma+(m-1)/2}\\ 
-\frac{2}{\pi}(\gamma+\frac{m-1}{2})L^{sc}_{\gamma,m-1}|\partial\Omega|\lambda^{\gamma+(m-1)/2}\int_0^{1}x\arctan(\frac{\sqrt\lambda}{\kappa}x)(1-x^2)^{\gamma+(m-3)/2}\, dx
\\+ O(\frac{\gamma\lambda^{\gamma+(m-1)/2}}{(\log|\lambda|)^{\gamma}}),
\end{multline}
which proves Theorem \ref{thm:riesz}.

\appendix

\section{Proof of Proposition \ref{prop:everythingispc}}

In this appendix we give the proof of Proposition \ref{prop:everythingispc}. We specialize to the case $m=2$ for notational simplicity, but the proof works essentially verbatim in arbitrary dimension.

Consider local coordinates on $\hat M_{\epsilon}^2$. There are six projective coordinate patches that cover it. $T$ is a coordinate in all of them and is always a bdf for tf. $(x,x')$ are also coordinates in all of them, and we let $\tilde x=x-x'$. Without loss of generality we assume that $y'\lesssim y$, as the analysis is identical in the other case, so we need only consider three coordinate patches:
\begin{enumerate}
\item When $y'\lesssim y\lesssim\epsilon$ and $y'\lesssim\epsilon$, use $(\tilde y':=\frac{y'}{y},\hat y:=\frac{y}{\epsilon},\epsilon)$ as bdfs for lf, hnf, and hff respectively.
\item When $y'\lesssim\epsilon\lesssim y$, use $(\frac{y'}{\epsilon},\frac{\epsilon}{y},y)$ as bdfs for lf, hlf, and hff respectively.
\item When $\epsilon\lesssim y'\lesssim y$, use $(\frac{\epsilon}{y'},\frac{y'}{y},y)$ as bdfs for zf, hlf, and hff respectively.
\end{enumerate}
We will show that $H_{corr}$ is pc in each of the three coordinate patches. We begin with patches (2) and (3), then use a different argument for patch (1).

\subsection{Patches (2) and (3)} The critical fact about patches (2) and (3) is that they are away from hnf and thus away from $P_3$. So in these patches,
\[M_{\epsilon,rh}^2 = [\hat M_{\epsilon}^2;P_1].\]
To take advantage of this, we write $H_{corr}$ as a product
\begin{multline}
H_{corr}=\epsilon^{-1}H_{corr,1}\cdot H_{corr,2}\\ := \epsilon^{-1}\left(-\frac{1}{\sqrt{4\pi} T}\exp[-\frac{|x-x'|^2}{4T^2}-\frac{(y+y')^2}{8T^2}]\right)
\cdot\left( \exp[-\frac{(y+y')^2}{8T^2}]v(\frac{y+y'}{2T}+\frac{T}{\epsilon})\right).
\end{multline}

We first claim that $H_{corr,1}$ is pc on $M^2_{rh}$. Indeed it is essentially the half-space correction term from \cite{NRS25}, which is pc on $M^2_{rh}$. One could see this directly from the expression, but it is easiest to observe that replacing $y$ and $y'$ with $\sqrt 2 y$ and $\sqrt 2 y'$ respectively yields precisely the half-space correction term from \cite{NRS25}. Since this replacement leaves the manifold-with-corners structure unchanged (it is a b-map from $M^2_{rh}$ to itself), we have the claim. By Lemma \ref{lem:addtheeps}, we conclude that $\epsilon^{-1}H_{corr,1}$ is pc on $M^2_{rh,\epsilon}$.

The term $H_{corr,2}$ has no $x$-coordinates and thus is not actually pc on $M^2_{rh,\epsilon}$. However:

\begin{lemma}\label{lem:hcorr2} In patches (2) and (3), the expression $H_{corr,2}$ is pc on $[\hat M_{\epsilon}^2;P_5]$.
\end{lemma}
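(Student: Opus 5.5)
In patches (2) and (3) I will write $H_{corr,2}$ as a composition $F\circ\Phi$. Here
\[F(a,b):=e^{-2a^2}v(a+b),\qquad a=\frac{y+y'}{2T},\quad b=\frac{T}{\epsilon}.\]
The identity holds because $(y+y')^2/(8T^2)=a^2/2$. Adjusting the constant in the exponent, $F(a,b)=e^{-a^2/2}v(a+b)$. The plan has three parts: show that $F$ is pc on a suitable compactification $Q$ of $[0,\infty)_a\times[0,\infty)_b$, show that $\Phi$ lifts to a b-map $[\hat M^2_\epsilon;P_5]\to Q$, and then apply the pullback theorem.

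\emph{Step 1: $F$ on $Q$.} Compactify $[0,\infty]_a\times[0,\infty]_b$ using the defining functions $1/a$ and $1/b$ at infinity. By the Lemma on $v$, $v(s)$ is smooth in $s$ near $0$ and smooth in $1/s$ near $\infty$. Wherever $a$ is bounded, $e^{-a^2/2}$ is smooth. Where $a\to\infty$, the factor $e^{-a^2/2}$ vanishes to infinite order, while $v(a+b)$ grows at most polynomially (it is in fact bounded). So $F$ vanishes rapidly at $a=\infty$ regardless of $b$.

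The only corner needing care is $a$ bounded with $b\to\infty$. Write $1/(a+b)=\beta/(1+a\beta)$ with $\beta=1/b$. This is smooth in $(a,\beta)$ for $a$ bounded, so $F$ is smooth there. At the corner $a=b=\infty$ the infinite-order vanishing still holds.

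The conclusion is that $F$ is smooth on the compactified quadrant $Q$, vanishes to infinite order at $a=\infty$, and has index set $\mathbb N_0$ at $b=\infty$. No blow-up of $Q$ is needed.

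\emph{Step 2: lifting $\Phi$.} I need $a$, $1/a$, $b$, $1/b$ to be products of boundary defining functions (times smooth positive factors) on $[\hat M^2_\epsilon;P_5]$, locally in patches (2) and (3). In patch (2) the defining functions are $y$ for hff, $\epsilon/y$ for hlf and $y'/\epsilon$ for lf; in patch (3) they are $y$, $y'/y$ and $\epsilon/y'$. In both patches $y'\lesssim y$, so $y+y'=y(1+y'/y)$ with the second factor smooth and positive. Hence $a\sim y/T$ and $b=T/\epsilon$.

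After blowing up $P_5=\mathrm{tf}\cap\mathrm{hff}=\{T=y=0\}$, the ratio $y/T$ becomes a quotient of boundary defining functions: the new face $\sf_5$ versus tf and hff. Concretely, near the interior of the $P_5$ front face use coordinates $(\eta=y/T,\ T)$, and in the other projective charts use $(T/y,\ y)$. The quantity $b=T/\epsilon$ is expressed through $\epsilon=y\cdot(\epsilon/y)$ in patch (2), and $\epsilon=y\cdot(y'/y)\cdot(\epsilon/y')$ in patch (3). Thus $b$ is a monomial in the defining functions of tf, the $P_5$ face, hff, hlf, and (in patch (3)) zf.

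So both $a$ and $b$ are monomials in boundary defining functions. The issue is that they may be simultaneously small and large at different faces, which would make $\Phi$ fail to be a b-map into $Q$ at corners where the orders of $a$ and $b$ conflict.

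\emph{Step 3: the obstacle.} The main obstacle is exactly this possible conflict, for example a corner where $a\to0$ but $b\to\infty$ in a way that is not jointly monomial. The fallback is to avoid needing $\Phi$ to be a b-map and instead verify polyhomogeneity directly, using the explicit form of $F$ from Step 1.

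In each projective chart I will substitute the monomial expressions. Near $a=\infty$ the result vanishes to infinite order in the relevant defining function, since $e^{-a^2/2}$ is rapidly decreasing in any monomial going to infinity. On the complementary region $a$ is bounded. There $F$ is a smooth function of $(a,1/b)$, with $a$ and $1/b$ monomials that are bounded in that region. A smooth function of bounded monomials is pc, since smooth functions of products of defining functions are polyhomogeneous with index sets in $\mathbb N_0$.

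Splitting each chart by a cutoff in $a$ then gives polyhomogeneity on $[\hat M^2_\epsilon;P_5]$ in patches (2) and (3). Continuity of $F$ on $Q$ ensures that the pieces glue consistently.
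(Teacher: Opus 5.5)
Your route differs from the paper's, and your treatment of the bounded-$a$ region is complete. It corresponds to the paper's chart $y\lesssim T$, where the inverse argument of $v$ is a monomial and the pullback theorem applies.

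\textbf{The gap.} It lies in the large-$a$ region, which is the only nontrivial part of the lemma. There you assert infinite-order vanishing ``since $e^{-a^2/2}$ is rapidly decreasing in any monomial going to infinity.'' That gives only the pointwise bound $|F\circ\Phi|=O(\rho_{\tf}^\infty)$.

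Consider the corner $\{T/y=0\}\cap\{\epsilon/y=0\}$ in patch (2), and $\{T/y=0\}\cap\{(\epsilon/y')(y'/y)=0\}$ in patch (3). There the function $b=T/\epsilon=(T/y)/\eta$, with $\eta=\epsilon/y$ (resp.\ $(\epsilon/y')(y'/y)$), is not even continuous. Approaching the corner along different directions reaches every point of the edge $\{a=\infty\}\times[0,\infty]_b$ of $Q$. So smoothness of $F\circ\Phi$ at that corner does not follow from a bound on function values, and this smoothness is exactly what polyhomogeneity with index set $\infty$ at the lifted tf requires. You need every derivative to be $O(\rho_{\tf}^\infty)$.

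This is what the paper's extra blow-ups at $\{T/y=0\}$ supply: $P_7=\{T/y=\epsilon/y=0\}$ in patch (2), and the three blow-ups resolving $w_1w_2w_3/(w_1w_2+w_3^2)$ in patch (3). They make the argument of $v$ a b-map, so $v(\cdot)$ becomes pc, hence conormal. The exponential vanishes to infinite order at all the new faces, so the product descends to $[\hat M^2_\epsilon;P_5]$.

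The same objection applies to your Step 1 at $a=b=\infty$. With $\alpha=1/a$ and $\beta=1/b$, the function $1/(a+b)=\alpha\beta/(\alpha+\beta)$ is not smooth there. You must first check that each derivative of $v(a+b)$ in $(\alpha,\beta)$ is $O(\alpha^{-N})$ for some $N$, so that $e^{-1/(2\alpha^2)}$ can absorb it.

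\textbf{The fix.} Within your framework it is short. Set $\tau=T/y$.
\begin{itemize}
\item On $\{b\ge 1\}$, $1/b=\eta/\tau$ has $k$-th derivatives $O(\tau^{-k-1})$.
\item On $\{b\le 2\}$ one has $\eta\ge\tau/2$, and in patch (3) each of the factors $\epsilon/y'$ and $y'/y$ is $\gtrsim\tau$. So the $k$-th derivatives of $b$ are $O(\tau^{-k})$ there.
\end{itemize}
Combine this with Step 1 ($F$ smooth on $Q$ with all derivatives $O(\alpha^\infty)$, and $\alpha\sim\tau$) via the chain rule. Then every derivative of $F\circ\Phi$ is $O(\tau^\infty)$ on $\{\tau>0\}$, so $F\circ\Phi$ extends smoothly with infinite-order vanishing at the lifted tf.

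Equivalently, in patch (2) a single blow-up of $\{\tau=\eta=0\}$ makes $\Phi$ an honest b-map into $Q$. In its two charts, $(\alpha,1/b)\sim(\tau,\eta/\tau)$ and $(\alpha,b)\sim(\tau,\tau/\eta)$ are monomials. You can then pull $F$ back and blow down as the paper does.

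\textbf{What your approach buys.} Compared with the paper's one-variable treatment of $v$, your two-variable factorization is uniform across patches and meets a milder singularity. You only ever compare $\tau$ with $\eta$, whereas the inverse argument of $v$, namely $2\tau\eta/((1+y'/y)\eta+2\tau^2)$, compares $\tau^2$ with $\eta$. Your closing remark about continuity of $F$ is unnecessary: the cutoff $\chi(a)$ is smooth on $[\hat M^2_\epsilon;P_5]$, so the pieces add up automatically.
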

\begin{proof}
In \textbf{patch (2)}, our expression for $H_{corr,2}$ becomes
\[H_{corr,2}=\exp[-\frac{y^2(1+(y'/\epsilon)(\epsilon/y))^2}{8T^2}]v(\frac{y}{2T}(1+(y'/\epsilon)(\epsilon/y))+\frac{T}{y(\epsilon/y)}).\]
The submanifold $P_5$ is $\{T=y=0\}$. Blowing it up creates two new projective coordinate patches. First, if $T\lesssim y$, then we must use $T/y$ instead of $T$ and we obtain
\[H_{corr,2}=\exp[-\frac{(1+(y'/\epsilon)(\epsilon/y))^2}{8(T/y)^2}]v(\frac{1}{2(T/y)}(1+(y'/\epsilon)(\epsilon/y))+(T/y)\frac{1}{(\epsilon/y)}).\]
The exponential term is pc and decays to infinite order as $(T/y)\to 0$. On the other hand, the argument of $v$ is bounded away from zero and its inverse, the pullback of the boundary defining function $x^{-1}$ at $x=\infty$ in the codomain, is
\[2(T/y)\cdot\frac{1}{1+(y'/\epsilon)(\epsilon/y)+\frac{(T/y)}{(\epsilon/y)}}.\]
If this were a product of bdfs times a smooth nonvanishing function, then $v$ would be a b-map and we would be done by the pullback theorem. It is not.

Nevertheless, we are fine, as the exponential decay of the first term at the problematic corner comes to the rescue. Specifically, we blow up $P_7:=\{(T/y)=(\epsilon/y)=0\}$. After creating this new front face, the pullback of $x^{-1}$ does have the desired form, so the $v$ term is pc on $[\hat M_{\epsilon}^2;P_5;P_7]$. But the exponential term is pc on this space as well \emph{and} decays to infinite order at the new front face. Thus $H_{corr,2}$ decays to infinite order at the new front face, and so was already pc on $[\hat M_{\epsilon}^2;P_5]$.

We still need to consider the other patch, with $y\lesssim T$, where we use $y/T$ instead of $y$ and obtain
\[H_{corr,2}=\exp[-\frac 18(y/T)^2(1+(y'/\epsilon)(\epsilon/y))^2]v(\frac 12 (y/T)(1+(y'/\epsilon)(\epsilon/y))+\frac{1}{(y/T)(\epsilon/y)}).\]
The inverse of the argument of $v$ is now
\[2(\epsilon/y)(y/T)\frac{1}{2+(y/T)^2(\epsilon/y)(1+(y'/\epsilon)(\epsilon/y))},\]
which is a product of bdfs times a smooth nonvanishing function. The pullback theorem thus finishes the job and completes the analysis of patch (2).

\textbf{Patch (3)} is similar; our expression for $H_{corr,2}$ is
\[H_{corr,2}= \exp[-\frac{y^2(1+(y'/y))^2}{8T^2}]v(\frac{y}{2T}(1+(y'/y))+\frac{T}{(\epsilon/y')(y'/y)y}).\]
Again $P_5$ is $\{T=y=0\}$ and blowing it up creates two patches. If $T\lesssim y$ then the exponential term is decaying to infinite order at $(T/y)=0$, and the inverse of the argument of $v$ is
\[\frac{(T/y)(\epsilon/y')(y'/y)}{(\epsilon/y')(y'/y)(1+(y'/y)^2) + (T/y)^2}.\]
To see the structure, let $w_1=(\epsilon/y)(1+(y'/y))$, $w_2=(y'/y)$, $w_3=(T/y)$. Then this inverse argument is $w_1w_2w_3$ times a smooth nonvanishing function of them times
\[\frac{1}{w_1w_2+w_3^2}.\]
This singularity can be resolved by doing the following blow-ups: first $\{w_1=w_2=w_3=0\}$, then $\{w_2=w_3=0\}$ and $\{w_1=w_3=0\}$. With these the inverse argument has the desired form. To see this, the blown-up space has five coordinate patches, two of which are isomorphic by switching the indices 1 and 2.
\begin{itemize}
\item In the patch $(w_1/w_3, w_2/w_3,w_3)$ we have
\[w_3^{-2}\frac{1}{1+(w_1/w_3)(w_2/w_3)}.\]
\item In the patch $(w_1/w_3,w_3/w_2,w_2)$ we have
\[w_2^{-2}(w_3/w_2)^{-1}\frac{1}{1+(w_1/w_3)}.\]
\item In the patch $(w_3/w_1,w_1/w_2,w_2)$ we have
\[w_2^{-2}(w_1/w_2)^{-1}\frac{1}{1+(w_3/w_1)^2(w_1/w_2)}.\]
\end{itemize}
In each patch, the fraction is smooth and nonvanishing. So after these three additional blow-ups, the $v$ term is pc. However, each additional blow-up was done at $(T/y)=0$, where the exponential term has infinite order decay. By the same argument as before, $H_{corr,2}$ is pc here.

Finally, if $y\lesssim T$ then we get a smooth exponential term and the inverse of the argument of $v$ is
\[(\epsilon/y')(y'/y)(y/T)\frac{1}{1+(\epsilon/y')(y'/y)(y/T)^2(1+(y'/y))}.\]
This again shows that $v$ is a b-map. Thus $H_{corr,2}$ is pc on $[\hat M_{\epsilon}^2;P_5]$ in patch (3), completing the proof of Lemma \ref{lem:hcorr2}.
\end{proof}
To complete the analysis, observe that since $P_1\subseteq P_5$,
\[ [\hat M_{\epsilon}^2;P_1;P_5] = [\hat M_{\epsilon}^2;P_5;P_1],\]
so this space is a blow-up of both $[\hat M_{\epsilon}^2;P_1]$ and $[\hat M_{\epsilon}^2;P_5]$. Therefore both factors $H_{corr,1}$ and $H_{corr,2}$ are pc here, and thus so is their product $H_{corr}$, But $H_{corr,1}$ decays to infinite order at $P_5$, so $H_{corr}$ does as well, and thus $H_{corr}$ was in fact already pc on $[\hat M_{\epsilon}^2;P_1]$. In patches (2) and (3), that shows that $H_{corr}$ is pc on $M_{\epsilon,rh}^2$.

\subsection{Patch (1)} In this patch we must consider $H_{corr}$ all together as opposed to separately. We have the coordinates $(\epsilon, \hat y, \tilde y', T, \tilde x, x)$. In these coordinates,
\[H_{corr}=\exp[-\frac{\tilde x^2}{4T^2}-\frac{\epsilon^2\hat y^2(1+\tilde y')^2}{4T^2}]v(\frac{\epsilon \hat y(1+\tilde y')}{2T}+\frac{T}{\epsilon}).\]
We blow up $P_1$ to create sf$_0$ and then $P_3$ to create sf, and in these coordinates
\[P_1 = \{\epsilon=T=\tilde x=0\};\quad P_3=\{\hat y=T=\tilde x=0\}.\]
Observe that the $x$ coordinate does not appear in $H_{corr}$; the $\tilde y'$ coordinate is also harmless, and the real interest is the interplay between $\epsilon$, $T$, $\hat y$, and $\tilde x$.

When blowing up $P_1$, we introduce three new coordinate sub-patches of patch (1), depending on which of $T$, $\epsilon$, or $\tilde x$ is the largest. (We assume without loss of generality throughout that $\tilde x\ge 0$; the negative case is identical, just use $-\tilde x$ instead).
\begin{enumerate}
\item[i)] If $T$ is the largest, use $(T,\tilde\epsilon:=\frac{\epsilon}{T},u:=\frac{\tilde x}{T})$.
\item[ii)] If $\tilde x$ is the largest, use $(\tilde x,T_0:=\frac{T}{\tilde x},\epsilon_0:=\frac{\epsilon}{\tilde x})$.
\item[iii)] If $\epsilon$ is the largest, use $(\epsilon,\hat T:=\frac{T}{\epsilon},\hat x:=\frac{\tilde x}{\epsilon})$.\\
\end{enumerate}

In \textbf{sub-patch i)}, we have
\[H_{corr}=\exp[-\frac 14 u^2-\frac 14\tilde\epsilon^2\hat y^2(1+\tilde y')^2]v(\frac 12\tilde\epsilon \hat y(1+\tilde y')+\frac{1}{\tilde\epsilon}).\]
The exponential term is just smooth. As in the proofs in patches (2) and (3), the argument of $v$ is bounded away from zero and its inverse is
\[\tilde\epsilon(\frac{1}{1+\frac 12\tilde\epsilon^2\hat y(1+\tilde y')}),\]
which is a product of bdfs times a smooth nonvanishing function. Thus the $v$ term is pc here. So in this sub-patch, $H_{corr}$ is already pc after blowing up just $P_1$. It is therefore pc on $M^2_{rh,\epsilon}$.\\

In \textbf{sub-patch ii)}, we have
\[H_{corr}=\exp[-\frac{1}{4T_0^2}-\frac{\epsilon_0^2\hat y^2(1+\tilde y')^2}{4T_0^2}]v(\frac{\epsilon_0 \hat y(1+\tilde y')}{2T_0}+\frac{T_0}{\epsilon_0}).\]
The exponential term is smooth here and decays to infinite order at $T_0=0$. The $v$ term is not pc -- but we can fix that with blow-ups that are all in the lift of $T_0=0$. Specifically, let $w_1=\epsilon_0$, $w_2=\hat y(1+\tilde y')/2$, $w_3=T_0$, then we have
\[v(\frac{w_1w_2}{w_3}+\frac{w_3}{w_1}).\]
We claim that this is pc after blowing up first $\{w_1=w_3=0\}$ then the lift of $\{w_2=w_3=0\}$; since $T_0=w_3$ is zero at both of these new faces, $H_{corr}$ decays there to infinite order and thus the blow-ups are unnecessary. So this claim is sufficient. 

To prove the claim, if $w_1\lesssim w_3$ then we have 
\[v((w_1/w_3)w_2 + \frac{1}{(w_1/w_3)}).\]
The argument is bounded away from zero and its inverse is
\[(w_1/w_3)\cdot\frac{1}{1+(w_1/w_3)^2w_2},\]
which is a product of bdfs times a smooth nonvanishing function as needed. On the other hand, if $w_3\lesssim w_1$ then we have
\[v(\frac{w_2}{(w_3/w_1)}+(w_3/w_1)),\]
which needs more work. But this is why we blow up the lift of $\{w_2=w_3=0\}$, which in these coordinates is $\{w_2=(w_3/w_1)=0\}$. If $w_2$ is smaller, the argument of $v$ is bounded away from infinity and is smooth; if it is larger, then we have
\[v(\frac{1}{((w_3/w_1)/w_2)}+w_2((w_3/w_1)/w_2).\]
The argument of $v$ is bounded away from zero and its inverse is
\[((w_3/w_1)/w_2)\frac{1}{1+w_2((w_3/w_1)/w_2)^2},\]
which again has the desired form. This completes the analysis of sub-patch ii).\\

In \textbf{sub-patch iii)}, we have
\[H_{corr}=\exp[-\frac{\hat x^2}{4\hat T^2}-\frac{\hat y^2(1+\tilde y')^2}{4\hat T^2}]v(\frac{\hat y(1+\tilde y')}{2\hat T}+\hat T).\]
This is not yet pc; in this patch and only this patch, we also need to blow up $P_3$, which is $\{\hat y=\hat T=\hat x=0\}$. After doing so there are, yet again, three coordinate patches:
\begin{itemize}
\item If $\hat T$ is largest then we have, where everything in parentheses is a projective coordinate,
\[H_{corr}=\exp[-\frac 14(\hat x/\hat T)^2-\frac 14(\hat y/\hat T)^2(1+\tilde y')^2]v(\frac 12(\hat y/\hat T)(1+\tilde y')+\hat T).\]
Here the exponential is smooth and the argument of $v$ is smooth and bounded away from infinity, so the expression is pc.
\item If $\hat y$ is largest then, as with the first case,
\[H_{corr}=\exp[-\frac{(\hat x/\hat y)^2}{4(\hat T/\hat y)^2}-\frac{(1+\tilde y')^2}{4(\hat T/\hat y)^2}]v(\frac{(1+\tilde y')}{2(\hat T/\hat y)}+(\hat T/\hat y)\hat y).\]
The exponential term is smooth and vanishes to infinite order at $(\hat T/\hat y)=0$. The argument of $v$ is bounded away from zero and its inverse is $(\hat T/\hat y)$ times a smooth nonvanishing function. So the expression is pc.
\item If $\hat x$ is largest write $\overline T=\hat T/\hat x$ and $\overline y=\hat y/\hat x$; we get

\[H_{corr}=\exp[-\frac{1}{4\overline T^2}-\frac{\overline y^2(1+\tilde y')^2}{4\overline T^2}]v(\frac{\overline y(1+\tilde y')}{2\overline T}+\overline T\hat x).\]
The exponential term is smooth and vanishes to infinite order at $\overline T=0$. The $v$ term is not yet pc but all that we have to do is blow up $\{\overline y=\overline T=0\}$, where the exponential term vanishes to infinite order. If $\overline T$ is bigger, the argument of $v$ is bounded and smooth; if $\overline y$ is bigger, the argument is bounded away from zero and its inverse is the bdf $\overline T/\overline y$ times a smooth nonvanishing function. In either case, $H_{corr}$ is pc.
\end{itemize}

We have now completed the proof of Proposition \ref{prop:everythingispc}.

\end{document}